\newtheorem{theorem}{Theorem}[section]
\newtheorem{lemm}[theorem]{Lemma}
\newtheorem{theo}[theorem]{Theorem}
\theoremstyle{definition}
\newtheorem{coro}[theorem]{Corollary}
\theoremstyle{remark}
\newtheorem{remark}[theorem]{Remark}
\numberwithin{equation}{section}
\begin{document}

\baselineskip 17pt

\title[Poincar\'{e} series, exponents, and McKay-Slodowy correspondence]
{Poincar\'{e} series, exponents of affine Lie algebras, and McKay-Slodowy correspondence}

\author[Jing]{Naihuan Jing}
\address{Department of Mathematics,
   North Carolina State University,
   Ra\-leigh, NC 27695-8205, USA}
\email{jing@math.ncsu.edu}

\author[Wang]{Danxia Wang}
\address{Department of Mathematics, Shanghai University,
Shanghai 200444, China}
\email{wangdanxia@i.shu.edu.cn}

\author[Zhang]{Honglian Zhang}
\address{Department of Mathematics, Shanghai University,
Shanghai 200444, China} \email{hlzhangmath@shu.edu.cn}

\subjclass[2010]{14E16, 17B67, 20C05}

\keywords{Poincar\'{e} series, tensor algebras, invariants, McKay-Slodowy correspondence.}
\begin{abstract}
Let $N$ be a normal subgroup of a finite group $G$ and
$V$ be a fixed finite-dimensional
$G$-module. 
The Poincar\'{e} series for the multiplicities of induced modules and restriction modules in the tensor algebra $T(V)=\oplus_{k \geq 0}V^{\otimes k}$
are studied in connection with the McKay-Slodowy correspondence. In particular, it is shown that
the closed formulas for the Poincar\'e series associated with the distinguished pairs of subgroups
of $\mathrm{SU}_2$ give rise to the exponents of all untwisted and twisted affine Lie algebras
except ${\rm A}_{2n}^{(1)}$.

\end{abstract}

\date{}
\maketitle

\section{Introduction}

McKay correspondence establishes a far-reaching one-to-one map between subgroups of the special unitary group
$\mathrm{SU}_2$ and affine Dynkin diagrams of simply laced types \cite{Mc}. Since its introduction, numerous
deep connections and applications have been found in combinatorics, algebraic geometry, representation theory and
mathematical physics. For example, McKay's observation corresponds to the classification of the minimal resolutions of the singularity of
the action of a finite subgroup $G$ of $\mathrm{SU}_2$ on $\mathbb C^2$ in terms of simply laced Dynkin diagrams
\cite{G-SV,Kn,Kos2,Sl2,Ste}.

Let $G$ be a finite group and $V$ a faithful $G$-module. The tensor algebra $T(V)=\oplus_{k\geq 0}V^{\otimes k}$ is naturally
a $G$-module. Similar to the well-known result of Molien \cite{B} on $G$-invariants of
the symmetric algebra $S(V)$, the Poincar\'e series $m_{V}(t)$ of $G$-invariants in the tensor algebra $T(V)$ are rational functions in terms of
irreducible characters of $G$. In particular, when $G$ is a finite subgroup of $\mathrm{SU}_2$, the Poincar\'e series  $m_{V}(t)$ provides
a conceptual interpretation of the exponents of the affine Lie algebra in simply laced types \cite{Ben}.
It is natural to expect similar explanation of the exponents for other types of affine Lie algebras, in particular twisted
affine Lie algebras. 

In the classic work \cite{Sl} Slodowy studied the minimal resolution of the singularity of the action of the quotient $G/N$ of finite subgroups
of $\mathrm{SU}_2$ on the quotient space
$\mathbb C^2/N$, which generalizes the minimal resolution of the singularity of $\mathbb C^2/G$, where $G$ is a finite subgroup of $\mathrm{SU}_2$.
Slodowy discovered that the minimal resolution of singularity of $\mathbb C^2/N$ by
the action of $G/N$ are in one-to-one correspondence to all affine Dynkin diagrams.
Algebraically, Slodowy found that the pairs of finite groups $N, G$ of $\mathrm{SU}_2$, where $N$ is a normal subgroup of $G$, essentially realize all affine Dynkin diagrams.

To be more specific, let $G$ be a finite group with a normal subgroup $N$.  The Grothendieck group
$R(G)$ is spanned by the irreducible complex modules $\rho_i$, $i\in {\rm I}_{G}$.
Similarly $R(N)$ is spanned by the irreducible complex modules $\phi_i$, $i\in {\rm I}_{N}$.
The induction functor $\mathrm{Ind}: R(N)\longrightarrow R(G)$ and the restriction functor $\mathrm{Res}: R(G)\longrightarrow R(N)$
are defined as usual and we will denote the images $\mathrm{Ind}(\phi):=\hat{\phi}$ and $\mathrm{Res}(\rho):=\check{\rho}$ respectively.

It turns out that the dimension of the span $\{\check{\rho}_i|i\in {\rm I}_G\}$ agrees with that of the span $\{\hat{\phi}_i|i\in {\rm I}_N\}$. We pare down the
set $\{\check{\rho}_i|i\in {\rm I}_G\}$ to a basis $\{\check{\rho}_i|i\in {\rm \check{I}}\}$ for the subspace $\mathrm{span}(\check{\rho}_i)$,
and similarly we also pare down a corresponding basis $\{\hat{\phi}_i|i\in {\rm \hat{I}}\}$ for the subspace $\mathrm{span}(\hat{\phi}_i)$, then $|{\rm \hat{I}}|=|{\rm \check{I}}|$ is equal to the
common dimension of the two spans.

Let $V$ be a fixed finite-dimensional $G$-module, and we also denote $\mathrm{Res} V=\check{V}\in R(N)$.
Clearly $\check{V}\otimes \mathrm{span}(\check{\rho_i})\subset \mathrm{span}(\check{\rho_i})$ and
$V\otimes \mathrm{span}(\hat{\phi_i})\subset \mathrm{span}(\hat{\phi_i})$.
Therefore the tensor decompositions
\begin{equation}\label{1.2}
  \check{V}\otimes \check\rho_j=\bigoplus\limits_{i\in\check{\rm I}}a_{ij} \check\rho_{i}\qquad {\rm and}\qquad
  V\otimes \hat\phi_j=\bigoplus\limits_{i\in\hat{\rm I}}b_{ij} \hat\phi_i
\end{equation}
give rise to two integral matrices ${\rm \widetilde{A}}=(a_{ij})$ and ${\rm \widetilde{B}}=(b_{ij})$ of the same size respectively.
The corresponding representation graph is the digraph
$\mathcal{R}_{\check{V}}(\check{G})$ (resp. $\mathcal{R}_{V}(\hat{N})$) with vertices indexed by ${\rm \check{I}}$ (resp. ${\rm \hat{I}}$),
where $i$ is joined to $j$ by $\mathrm{max}(a_{ij}, a_{ji})$ (resp. $\mathrm{max}(b_{ij}, b_{ji})$) edges with an arrow pointing to
$i$ if $a_{ij}>1$ (resp. $b_{ij}>1$).

When the pair of subgroups are subgroups of $\mathrm{SU}_2$, Slodowy \cite{Sl} observed that the representation graphs
$\mathcal{R}_{\check{V}}(\check{G})$ and $\mathcal{R}_{V}(\hat{N})$ are in fact the affine Dynkin diagrams
(his original statement missed a couple of groups, cf. \cite{EtFe}). Of course, when $N=1$, the graphs $\mathcal{R}_{V}({G})$
are of simply-laced types according to the McKay correspondence. For nontrivial $N$, the explicit
correspondence goes as follows. Let $C_n$ be the cyclic group of order $n$, $D_n$ the binary dihedral group of
order $4n$, $T$ the binary tetrahedron group of order $24$ and $O$ the binary octahedral group of order $48$.
For $(G, N)=(D_{2(n-1)},D_{n-1})$, $(D_n,C_{2n})$,
$(D_{2n},C_{2n})$, $(O,T)$, $(T,D_2$), $(D_2,C_2$),
the representation graph $\mathcal{R}_{\check{V}}(\check{G})$ is
the twisted affine Dynkin diagram of type
${\rm A_{2n-1}^{(2)}}$, ${\rm D_{n+1}^{(2)}}$, ${\rm A_{2n}^{(2)}}$, ${\rm E_6^{(2)}}$, ${\rm D_4^{(3)}}$,
${\rm A_2^{(2)}}$ respectively,
and the representation graph $\mathcal{R}_{V}(\hat{N})$ is
the non-twisted multiply laced affine Dynkin diagram of type
${\rm B_n^{(1)}}$, ${\rm C_n^{(1)}}$, ${\rm C_n^{(1)}}$, ${\rm F_4^{(1)}}$, ${\rm G_2^{(1)}}$,
${\rm A_1^{(1)}}$ respectively.
Moreover, ${\rm C_{\widetilde{A}}=2I}-{\rm \widetilde{A}}$ and
${\rm C_{\widetilde{B}}}={\rm 2I}-{\rm \widetilde{B}}$
are the corresponding Cartan matrices of Dynkin diagrams
$\mathcal{R}_{\check{V}}(\check{G})$ and $\mathcal{R}_{V}(\hat{N})$, respectively.

To understand Slodowy's idea and prepare for the later work,  we provide a detailed exposition of the McKay-Slodowy correspondence in the first part.

The second part of the paper aims to generalize Benkart's interpretation of the exponents 
to all affine Dynkin diagrams. Let $V$ be as above and 
for simplicity we will also write
$\check{V}=V$.
For each $j\in {\rm \check{I}}$ (resp. ${\rm \hat{I}}$), let $\check{m}^{j}_{k}$ (resp. $\hat{m}^{j}_{k}$) be the multiplicity
of $\check{\rho}_j$ inside $V^{\otimes k}$ (resp. $\hat{\phi}_j$ inside $V^{\otimes k}$):
\begin{align*}
\check{m}^{j}_{k}={\mathrm{dim}}(\mathrm{Hom}_{N}(\check{\rho}_j, {V}^{\otimes k})) \ \ ({\rm resp.} \ \  \hat{m}^{j}_{k}={\mathrm{dim}}(\mathrm{Hom}_{G}(\hat{\phi}_j, {V}^{\otimes k}))).
\end{align*}
Therefore the generating series
\begin{equation}\label{1.3}
  \check{m}^{j}(t)=\sum\limits_{k\geq 0}\check{m}^{j}_{k}t^{k}\qquad {\rm and}\qquad
  \hat{m}^{j}(t)=\sum\limits_{k\geq 0}\hat{m}^{j}_{k}t^{k}
\end{equation}
are the Poincar\'{e} series for the multiplicities of $\check\rho_j$ and $\hat\phi_j$ in the tensor algebra
$T(V)=\oplus_{k \geq 0}V^{\otimes k}$ respectively.

In the special case when $N=1$ and $G$ is a finite group of $\mathrm{SU}_2$, the Poincar\'e series $\check{m}^j(t)=\hat{m}^j(t)$ has been thoroughly studied by Benkart \cite{Ben} where she
gave a closed rational function formula of $m^j(t)$ in terms of the irreducible characters of $G$. Moreover, it turns out that the
denominator also gives the detailed information about the spectrum of the Coxeter element, just as the
Poincar\'e series of the $G$-invariants in the symmetric algebra $S(V)$ does \cite{Kos,Kos2,Kos3,Sp1,Sp2,Ste,St,Su}.
Our main result will study the Poincar\'e series $\check{m}^j(t)$ and  $\hat{m}^j(t)$ for all the distinguished
pairs of subgroups of $\mathrm{SU}_2$ and provides a conceptual interpretation of
the exponents for all non-simply laced and twisted affine Lie algebras.

This paper is organized as follows. In Section $2$,
after a brief review of McKay correspondence we provide a detained exposition for the McKay-Slodowy correspondence
to realize all affine Dynkin diagrams.
To prepare for the later work, we clarify some of the missing points in the literature
(e.g. our treatment of types ${\rm A_2^{(2)}}$ and ${\rm A_{2n}^{(2)}}$) and emphasize the group theoretical
construction using the induction and restriction functors as much as possible.
In Section $3$ the formulas of
the Poincar\'{e} series for the $N$-restriction of the irreducible $G$-modules and the induced modules of the irreducible $N$-modules
in tensor algebra $T(V)=\oplus_{k \geq 0}V^{\otimes k}$ are given using the general theory.
In Section $4$, the Poincar\'e series $\check{m}^j(t)$ and $\hat{m}^j(t)$ are explicitly computed for the tensor
algebra $T(V)$ for all the distinguished pairs of subgroups $N \lhd G$ in $\mathrm{SU}_{2}$, and finally we show that
the closed formulas of the Poincar\'e series of the invariants in the tensor algebra $T(V)$
for the distinguished pairs of subgroups of $\mathrm{SU}_2$ give rise to
the exponents of all affine Lie algebras in both untwisted and twisted types except $A_{2n}^{(1)}$.

\section{Realizations of non-simply laced affine Dynkin diagrams}

\subsection{Simply-laced types}

\qquad

The McKay correspondence establishes a fundamental relation between finite subgroups of $\mathrm{SU}_2(\mathbb C)$ and affine Dynkin diagrams
of simply-laced types.
We recall the algebraic version to prepare for further development.

\subsubsection{Cyclic groups}

For $n\geq 2$, let $C_n=\langle z| z^n=1\rangle$ be the cyclic group of order $n$. The canonical imbedding $\pi: C_n\longrightarrow
\mathrm{SU}_2$ is given by $\pi(z)=diag(\theta_n^{-1}, \theta_n)$,
where $\theta_n=e^{2\pi \sqrt{-1}/{n}}$, a primitive $n$th root of unity.
The cyclic group $C_n$ has exactly $n$ complex irreducible modules $\xi_i $ ($i=0,1,\ldots, n-1$), which are all one-dimensional
given by $\chi_{\xi_i} (z)=\theta_n^i$ ($i=0,1,\ldots, n-1$).
Then $\pi\simeq\xi_1\oplus\xi_{-1}$ and the multiplication rule $\xi_l\otimes\xi_k=\xi_{l+k}$
implies the fusion rule
\begin{equation*}
\pi\otimes\xi_i=\xi_{i-1}\oplus\xi_{i-1}
\end{equation*}
which gives rise to the Dynkin diagram of type ${\rm A}_{n-1}^{(1)}$.

\hskip6pt

\begin{center}
\begin{tikzpicture}
\matrix (m) [matrix of math nodes, row sep=2mm, column sep=2cm]
   { \xi_0& \xi_1  \\};
\path (m-1-1) edge[double] (m-1-2);
\draw [->,>=angle 90] (1,0)--(1.01,0);
\draw [->,>=angle 90] (-1,0)--(-1.01,0);
\draw node at(3.2,0){$(n=2)$};
\end{tikzpicture}
\end{center}

\begin{center}
\begin{tikzpicture}
\matrix (m) [matrix of math nodes, row sep=2mm, column sep=1cm]
   { & & \xi_0 & & \\
   \xi_1 & \xi_2 & \cdots & \xi_{n-2} & \xi_{n-1} \\};
\path
(m-2-1) edge (m-2-2) (m-2-2) edge (m-2-3) (m-2-3) edge (m-2-4) (m-2-4) edge (m-2-5)
(m-1-3) edge (m-2-1) (m-1-3) edge (m-2-5);
\draw node at(4.8,-0.3){$(n>2)$};
\end{tikzpicture}
\end{center}

\subsubsection{Binary dihedral groups}\label{s:binary}
The binary dihedral group $D_n$ ($n\geq 2$) of order $4n$ is the group $\langle x, y| x^n=y^2=-1, yxy^{-1}=x^{-1}\rangle$.
The embedding $\pi$ of $D_n$ into $\mathrm{SU}_2$ is given by
\begin{equation*}\label{ddgen}
  \pi(x)=\left(
      \begin{array}{cc}
        \theta_{2n}^{-1} & 0 \\
        0 & \theta_{2n} \\
      \end{array}
    \right), \ \ \ \
  \pi(y)=\left(
      \begin{array}{cc}
        0 &\sqrt{-1}\\
       \sqrt{-1}& 0 \\
      \end{array}
    \right).
\end{equation*}
Clearly $\{\pm1, x^{i}$ $(i=1,\ldots, n-1)$,
$y$, $yx\}$ is a set of representatives of
the conjugacy classes of $D_{n}$.

The $n+3$ irreducible modules are realized as follows. Let $\delta_i =\hat\xi_i$, the induced module of the $i$th irreducible module $\xi_i$ of
the cyclic group $C_{2n}=\langle x\rangle$, a normal subgroup of index $2$. The characters are determined by
$\chi_{\delta_i} (x)=\theta_{2n}^i+\theta_{2n}^{-i}$ and $\chi_{\delta_i} (y)=0$. Consequently $\delta_i =\delta_{2n-i}$
for $i=0, 1, \ldots, n$. It is easy to see that $\delta_i $ are 2-dimensional irreducible $D_n$-modules for $1\leq i\leq n-1$, but $\delta_0$ and $\delta_n$ are decomposed into a sum of two irreducible one-dimensional modules:
\begin{align*}
\delta_0&=\delta_{0}^{+}\oplus \delta_{0}^{-}, \\
\delta_n&=\delta_{n}^{+}\oplus \delta_{n}^{-}.
\end{align*}
Then the irreducible components are determined by
\begin{align*}
\chi_{\delta_{0}^{\epsilon}}(x)&=1, \qquad \chi_{\delta_{0}^{\epsilon}}(y)=\epsilon,  \\
\chi_{\delta_{n}^{\epsilon}}(x)&=-1, \qquad \chi_{\delta_{n}^{\epsilon}}(y)=\epsilon\sqrt{(-1)^n},
\end{align*}
where $\epsilon=\{\pm\}$ and we note that $\chi_{\delta_{n}^{\epsilon}}(y^2)=\chi_{\delta_{n}^{\epsilon}}(x^n)=(-1)^n$,
therefore $\chi_{\delta_{n}^{\epsilon}}(y)=\epsilon\sqrt{(-1)^n}$.
The fusion rule associated with the embedding $\pi=\delta_1$ is seen as:
$\pi \otimes\delta_{0}^{\epsilon}=\delta_1$,
 $\pi \otimes\delta_i=\delta_{i-1}\oplus\delta_{i+1}(1\leq i\leq n-1)$, $\pi \otimes\delta_{n}^{\epsilon}=\delta_{n-1}$.
Therefore the representation graph $\mathcal R_{\delta_1}(D_n)$ realizes the Dynkin diagram of type ${\rm D}_{n+2}^{(1)}$. For reference
the character table of $D_{n}$ is given in $\mathrm{Table}$ $1$ (also see \cite{Blu,Mon,St}),
where $|C_{G}(g)|$ is the cardinality of the conjugacy class containing $g$ in $G$.

\hskip6pt

\begin{center}
\begin{tikzpicture}
\matrix (m) [matrix of math nodes, row sep=4mm, column sep=0.8cm]
   { & \delta_{0}^- &  & & \delta_{n}^- & \\
   \delta_{0}^+ & \delta_{1} & \delta_{2} & \cdots & \delta_{n-1} & \delta_{n}^+ \\};
\path
(m-2-1) edge (m-2-2) (m-2-2) edge (m-2-3)  (m-2-5) edge (m-2-6)
(m-1-2) edge (m-2-2) (m-1-5) edge (m-2-5);
\draw[line width=0.4pt] (-0.65,-0.55)--(0.2,-0.55) (0.85,-0.55)--(1.65,-0.55);
\end{tikzpicture}
\end{center}

\hskip6pt

\ \  $\textbf{Table 1}$
\ \ \ \ \ \ \ \ \ \ \ \ \ \ \ \ \ \ \ \ \ \ \ \ \ \ \ \ \ \ \ \ \ \ \ \ \ \ \ \ \
\ \ \ \ \ \ \ \ \ \ \ \ \ \ \ \ \ \ \ \ \ \ \
The character table of $D_n$.

\ \  \begin{tabular}{l@{\indent\indent}c@{\indent\indent}
  c@{\indent\indent}c@{\indent\indent}
  c@{\indent\indent}c@{\indent\indent}c@{\indent\indent}c@{\indent}}
  \hline
   $\chi \backslash g$ & $1$ & $-1$ & $x$ & $\cdots$ & $x^{n-1}$ & $y$& $yx$ \\
   $ \backslash|C_{G}(g)|$ & $1$ & $1$ & $2$   & $\cdots$ & $2$ & $n$ & $n$ \\
  \hline
  $\chi_{\delta_{0}^{\pm}}$ & $1$ & $1$ & $1$ & $\cdots$ & $1$ & $\pm 1$ & $\pm 1$ \\
  $\chi_{\delta_i}  $ & $2$ & $2(-1)^i$ & $\theta_{2n}^i+\theta_{2n}^{-i}$ & $\cdots$ & $\theta_{2n}^{i(n-1)}+\theta_{2n}^{-i(n-1)}$ & $0$ & $0$ \\
  $\chi_{\delta_{n}^{\pm}}$ & $1$ & $(-1)^n$ & $-1$ & $\cdots$ & $(-1)^{n-1}$ & $\pm\sqrt{(-1)^n}$ & $\mp\sqrt{(-1)^n}$ \\

  \hline
\end{tabular}

\bigskip

\subsubsection{Binary tetrahedral group}\label{s:tetra}

The binary tetrahedral group $T$ of order $24$ is
$\langle x,y, z | x^2=y^2=z^{3}=-1, yxy^{-1}=x^{-1},
zxz^{-1}=y^{-1}x,zyz^{-1}=x\rangle$,
and the elements $\pm1, x, \pm z$ and $\pm z^2$ form
a set of representatives of the seven conjugacy classes.
The embedding $\pi$ of $T$ into $\mathrm{SU}_2$ is given by
\begin{align}\label{tg}
  \pi(x)=\left(
      \begin{array}{cc}
       \sqrt{-1}& 0 \\
        0 & -\sqrt{-1}\\
      \end{array}
    \right), \qquad
  \pi(y)=\left(
      \begin{array}{cc}
        0 &\sqrt{-1}\\
       \sqrt{-1}& 0 \\
      \end{array}
    \right), \qquad
  \pi(z)=\frac{1}{\sqrt{2}}\left(
                        \begin{array}{cc}
                          \theta_8^{-1} & \theta_8^{-1} \\
                          -\theta_8 & \theta_8 \\
                        \end{array}
                      \right).
\end{align}

The binary dihedral group $D_2=\langle x, y\rangle$ is a normal subgroup of $T$ of index $3$. Though $D_2$ has $5$ conjugacy classes, they
generate only $3$ conjugacy classes in $T$. Therefore the irreducible $D_2$-modules are induced to only three distinct $T$-modules such that
$\hat\delta_{2}^{+}=\hat\delta_{0}^{-}=\hat\delta_{2}^{-}$, more explicitly the induced characters obey that
\begin{align}\label{e:charT1}
   & \chi_{\hat\delta_{i}^{\pm}}(\pm 1)=3\chi_{\delta_{i}^{\pm}} (\pm 1)=3 \ \ (i=0,2), \ \ \ \
   \chi_{\hat\delta_1}(\pm 1)=3\chi_{\delta_1}(\pm 1)=\pm 6,\\ \label{e:charT2}
   & \chi_{\hat\delta_{0}^{+}}(x)=3\chi_{\delta_{0}^{+}} (x)=3, \ \ \chi_{\hat\delta_{2}^{\pm}}(x)=\chi_{\hat\delta_{0}^{-}}(x)=\chi_{\delta_{2}^{\pm}}(x)=-1,
   \ \  \chi_{\hat\delta_1}(x)=\chi_{\delta_1}(x)=0,\\ \label{e:charT3}
   & \chi_{\hat\delta_i^{\pm}} (\pm z)=\chi_{\hat\delta_i^{\pm}}(\pm z^2)=0  \ \ (i=0,2), \ \ \ \  \chi_{\hat\delta_1} (\pm z)=\chi_{\hat\delta_1} (\pm z^2)=0 .
\end{align}

Explicitly these three induced $T$-modules decompose into seven irreducible $T$-modules as follows:
\begin{align*}
   & \hat\delta_{0}^{+}=\tau_{0} \oplus \tau_{0}' \oplus \tau_{0}'', \\
   & \hat\delta_1=\tau_{1} \oplus \tau_{1}'\oplus \tau_{1}'', \\
   & \hat\delta_{2}^{+}=\hat\delta_{0}^{-}=\hat\delta_{2}^{-}:=\tau_2,
\end{align*}
where $\tau_0$ is the trivial module and $\{\tau_0, \tau_0', \tau_0'', \tau_1, \tau_1', \tau_1'', \tau_2\}$ forms the complete set of irreducible $T$-characters. It follows from \eqref{e:charT1}-\eqref{e:charT3} that $\pi\simeq \tau_{1}$ and
\begin{align*}
   & \chi_{\tau_{0}'}(\pm 1)=\chi_{\tau_{0}''}(\pm 1)=\chi_{\tau_{0}'}(x)=\chi_{\tau_{0}''}(x)=1,  \\
   & \chi_{\tau_{0}'}(\pm z)=\chi_{\tau_{0}''}(\pm z^2)=\theta_3, \\
   & \chi_{\tau_{0}'}(\pm z^2)=\chi_{\tau_{0}''}(\pm z)=\theta_3^2,\\
   & \chi_{\tau_{1}'}(\pm 1)=\chi_{\tau_{1}''}(\pm 1)=\pm 2, \ \ \chi_{\tau_{1}'}(x)=\chi_{\tau_{1}''}(x)=0,   \\
   & \chi_{\tau_{1}'}(\pm z)=\chi_{\tau_{1}''}(\mp z^2)=\pm\theta_3, \\
   & \chi_{\tau_{1}'}(\pm z^2)=\chi_{\tau_{1}''}(\mp z)=\mp\theta_3^2.
\end{align*}
Then $\pi\otimes \tau_{0}^{(i)}=\tau_{1}^{(i)}$,
$\pi\otimes \tau_{1}^{(i)}=\tau_{0}^{(i)}\oplus \tau_{2}(0\leq i\leq 2)$,
$\pi\otimes \tau_2 =\oplus_{i=0}^2\tau_{1}^{(i)}$
give rise to the Dynkin diagram of type ${\rm E_6^{(1)}}$. The
character table of $T$ is given in $\mathrm{Table}$ $2$ (also see \cite{Blu},  \cite{St}).

\hskip6pt

\begin{center}
\begin{tikzpicture}
\matrix (m) [matrix of math nodes, row sep=3.5mm, column sep=0.9cm]
   { &  & \tau_{0}' & &   \\
   &  & \tau_{1}' & &  \\
   \tau_{0} & \tau_{1} & \tau_{2} & \tau_{1}'' & \tau_{0}'' \\};
\path
(m-3-1) edge (m-3-2)  (m-3-4) edge (m-3-5)
(m-1-3) edge (m-2-3) (m-2-3) edge (m-3-3);
\draw[line width=0.4pt] (-1.25,-0.98)--(-0.3,-0.98) (0.2,-0.98)--(1.1,-0.98);
\end{tikzpicture}
\end{center}

\hskip6pt

\ \ \ \ \  $\textbf{Table 2}$
\ \ \ \ \ \ \ \ \ \ \ \ \ \ \ \ \ \ \ \ \ \ \ \ \ \ \ \ \ \ \ \ \ \ \ \ \ \ \ \ \
\ \ \ \ \ \ \ \ \ \ \ \ \ \ \ \ \ \ \ \ \ \ \
The character table of $T$.

\ \ \ \ \  \begin{tabular}{l@{\indent\indent\indent}c@{\indent\indent\indent\indent}c@{\indent\indent\indent\indent}
  c@{\indent\indent\indent\indent}c@{\indent\indent\indent\indent}
  c@{\indent\indent\indent\indent}c@{\indent\indent\indent\indent}c@{\indent}}
  \hline
   $\chi \backslash g$ & $1$ & $-1$ & $x$ & $z$ & $z^2$ & $-z$ & $-z^2$ \\
   $\ \ \ \backslash|C_{G}(g)|$ & $1$ & $1$ & $6$ & $4$ & $4$ & $4$ & $4$ \\
  \hline
  $\chi_{\tau_{0}}$ & $1$ & $1$ & $1$ & $1$ & $1$ & $1$ & $1$ \\
  $\chi_{\tau_{0}'}$ & $1$ & $1$ & $1$ & $\theta_3$ & $\theta_3^2$ & $\theta_3$ & $\theta_3^2$ \\
  $\chi_{\tau_{0}''}$ & $1$ & $1$ & $1$ & $\theta_3^2$ & $\theta_3$ & $\theta_3^2$ & $\theta_3$ \\
  $\chi_{\tau_{1}}$ & $2$ & $-2$ & $0$ & $1$ & $-1$ & $-1$ & $1$ \\
  $\chi_{\tau_{1}'}$ & $2$ & $-2$ & $0$ & $\theta_3$ & $-\theta_3^2$ & $-\theta_3$ & $\theta_3^2$ \\
  $\chi_{\tau_{1}''}$ & $2$ & $-2$ & $0$ & $\theta_3^2$ & $-\theta_3$ & $-\theta_3^2$ & $\theta_3$ \\
  $\chi_{\tau_2}$ & $3$ & $3$ & $-1$ & $0$ & $0$ & $0$ & $0$ \\
  \hline
\end{tabular}

\bigskip

\subsubsection{Binary octahedral group}\label{s:octa}

Let $O=\langle u,y, z |u^4=y^2=z^{3}=-1, yuy^{-1}=u^{-1},
zuz^{-1}=z^{-1}u^{-1},zyz^{-1}=u^2\rangle$
be the binary octahedral group order $48$. Then the subgroup $\langle u^2, y, z\rangle\simeq T$,
the binary tetrahedral group $T$ of order $24$ (see
subsection 2.1.3), so the former is also denoted by $T$. The group $O$ is imbedded into $\mathrm{SU}_2$ by
letting
$\pi(u)=diag(\theta_{8}, \theta_{8}^{-1})$ and $\pi(y), \pi(z)$ given by
\eqref{tg}.

The subgroup $T$ has $7$ conjugacy classes but generates $5$ conjugacy classes in $O$, then the $7$ irreducible
$T$-modules $\tau_{0}^{(i)}, \tau_{1}^{(i)}, \tau_2$ $(i=0,1,2)$ are induced into
$5$ different $O$-modules as $\hat\tau_{1}'=\hat\tau_{1}'', \hat\tau_{0}'=\hat\tau_{0}''$, and the induced characters are
computed by
$$\chi_{\hat\tau_k^{(i)}}(g)=\begin{cases} \chi_{\tau_k^{(i)}}(g)+\chi_{\tau_k^{(i)}}(h^{-1}gh), & \forall g,h\in T\\
   0, &   \forall g \in O\setminus T\end{cases}$$
for $k=0,1, i=0,1,2$ and $k=2, i=0$. Therefore the induced modules decompose into irreducible $O$-modules as follows:
\begin{align*}
   & \hat\tau_{0}=\omega_0^{+}\oplus\omega_0^-, \ \ \hat\tau_{1}=\omega_1^+\oplus\omega_1^-, \ \  \hat\tau_2=\omega_2^+\oplus\omega_2^-, \\
   & \hat\tau_{1}'=\hat\tau_{1}'':=\omega_3, \ \ \hat\tau_{0}'=\hat\tau_{0}'':=\omega_4.
\end{align*}
and the irreducible summands $\{\omega_i^{(\pm)}, \omega_3, \omega_4| i=0, 1, 2\}$ realize all irreducible $O$-modules.

Clearly $\omega_0^+$ is the trivial module and $\pi_1=\omega_1^+$ (resp. $\omega_1^-$), and we record other character values in the following
\begin{align*}
   \chi_{\omega_0^-}(\pm 1)&=\chi_{\omega_0^-}(y)=\chi_{\omega_0^-}(\pm z)=1, \ \ \chi_{\omega_0^-}(\pm u)=\chi_{\omega_0^-}(uy)=-1, \\
   \chi_{\omega_2^{\pm}}(\pm 1)&=3, \ \ \ \chi_{\omega_2^{\pm}}(y)=-1, \ \ \ \chi_{\omega_2^{\pm}}(\pm z)=0, \\
   \chi_{\omega_2^+}(\pm u)&=\chi_{\omega_2^-}(uy)=1, \ \ \chi_{\omega_2^-}(\pm u)=\chi_{\omega_2^+}(uy)=-1.
\end{align*}
Therefore,
$\pi\otimes \omega_{0}^{\pm} = \omega_{1}^{\pm}$,
$\pi\otimes \omega_{1}^{\pm} \simeq \omega_{0}^{\pm}\oplus \omega_{2}^{\pm}$,
$\pi\otimes \omega_{2}^{\pm} \simeq \omega_{1}^{\pm} \oplus \omega_{3}$,
$\pi\otimes \omega_{3}=\omega_{2}^{\pm}\oplus\omega_{4}$,
$\pi\otimes \omega_{4}=\omega_{3}$, and
the Dynkin diagram of type ${\rm E_7^{(1)}}$ is realized by $T$-irreducible modules.
The character table of $O$ is given in $\mathrm{Table}$ $3$ (also see \cite{Blu}, \cite{St}).

\hskip6pt

\begin{center}
\begin{tikzpicture}
\matrix (m) [matrix of math nodes, row sep=3.5mm, column sep=0.9cm]
   { &  &  & \omega_4 & &  &  \\
   \omega_0^+ & \omega_1^+ & \omega_2^+ & \omega_3 & \omega_2^- & \omega_1^- & \omega_0^- \\};
\path
(m-2-1) edge (m-2-2) (m-2-2) edge (m-2-3)
(m-2-5) edge (m-2-6) (m-2-6) edge (m-2-7)
(m-1-4) edge (m-2-4);
\draw[line width=0.4pt] (-1.25,-0.4)--(-0.3,-0.4) (0.3,-0.4)--(1.2,-0.4);
\end{tikzpicture}
\end{center}

\hskip6pt

\ \ \ \ $\textbf{Table 3}$
\ \ \ \ \ \ \ \ \ \ \ \ \ \ \ \ \ \ \ \ \ \  \ \ \ \ \ \ \ \ \ \  \  \ \ \ \ \ \ \ \ \ \ \ \ \ \ \ \  \  \ \ \ \ \ \ \ \ \ \
The character table of $O$.

\ \ \ \ \begin{tabular}{l@{\indent\indent\indent}c@{\indent\indent\indent}c@{\indent\indent\indent}
  c@{\indent\indent\indent}c@{\indent\indent\indent}c@{\indent\indent\indent}
  c@{\indent\indent\indent}c@{\indent\indent\indent}c@{\indent}}
  \hline
  $\chi \backslash g$ & $1$ & $-1$  & $u$ & $-{u}$ & $y$ & $uy$ & $z$ & $-z$ \\
   $\ \ \ \backslash|C_{G}(g)|$ & $1$ & $1$ & $6$ & $6$ & $6$ & $12$ & $8$ & $8$  \\
  \hline
  $\chi_{\omega_0^+}$ & $1$ & $1$ & $1$  & $1$ & $1$ & $1$ & $1$ & $1$ \\
  $\chi_{\omega_1^+}$ & $2$ & $-2$ & $\sqrt{2}$ & $-\sqrt{2}$ & $0$ & $0$  & $1$ & $-1$ \\
  $\chi_{\omega_2^+}$ & $3$ & $3$  & $1$ & $1$  & $-1$ & $-1$ & $0$ & $0$ \\
  $\chi_{\omega_{3}}$ & $4$ & $-4$  & $0$ & $0$ & $0$ & $0$  & $-1$ & $1$ \\
  $\chi_{\omega_{4}}$ & $2$ & $2$ & $0$ & $0$ & $2$ & $0$  & $-1$ & $-1$ \\
  $\chi_{\omega_2^-}$ & $3$ & $3$  & $-1$ & $-1$ & $-1$ & $1$  & $0$ & $0$ \\
  $\chi_{\omega_1^-}$ & $2$ & $-2$ & $-\sqrt{2}$ & $\sqrt{2}$ & $0$ & $0$  & $1$ & $-1$ \\
  $\chi_{\omega_0^-}$ & $1$ & $1$ & $-1$ & $-1$ & $1$ & $-1$  & $1$ & $1$ \\
  \hline
\end{tabular}

~~~~~~~~~~~~~~~~~~~~~~~~~

\subsection{Realizations of ${\rm A_{2n-1}^{(2)}}$ and ${\rm B_{n}^{(1)}}$ by the pair ($D_{2(n-1)}$,$D_{n-1}$)} \label{e:AB}

\qquad

We now start to describe the McKay-Slodowy correspondence, which covers the affine Dynkin diagrams of non-simply laced types. The main idea is to use a pair of finite groups $N\vartriangleleft G$ to
realize the simple roots either as induced $G$-modules from the irreducible $N$-modules or $N$-restrictions of the irreducible $G$-modules.

For fixed $n\geq 3$, let $D_{2(n-1)}=\langle x, y\rangle$ be the binary dihedral group of order $8(n-1)$, where $x^{2(n-1)}=y^2=-1, yxy^{-1}=x^{-1}$.
So $\langle x^2, y\rangle=D_{n-1}\vartriangleleft D_{2(n-1)}$ and $|D_{2(n-1)}:D_{n-1}|=2$.
As explained in subsection \ref{s:binary}, there are $2n+1$ irreducible $D_{2(n-1)}$-modules:
$\delta_{0}^{\pm}, \delta_i , \delta_{2(n-1)}^{\pm}$ $(1\leq i\leq 2n-3)$, and $n+2$ irreducible $D_{n-1}$-modules:
${\delta'}_{0}^{\pm}, \delta'_i, {\delta'}_{{n-1}}^{\pm}$ $(1\leq i\leq n-2)$.

First we consider restriction of the irreducible $D_{2(n-1)}$-modules to the subgroup $D_{n-1}$.
Note that $\chi_{\delta_i}(x)=\theta_{4(n-1)}^{i}+\theta_{4(n-1)}^{-i}$ for $1\leq i\leq 2n-3$,
$\chi_{\delta_0^{\pm}}(y)=\chi_{\delta_0^{\pm}}(yx)=\pm 1$
and $\chi_{\delta_{2(n-1)}^{\pm}}(y)=\chi_{\delta_{2(n-1)}^{\mp}}(yx)=\pm 1$
(subsection \ref{s:binary}). Therefore there are only $n+1$ restrictions and they satisfy the following relations:
\begin{align}\label{e:res-dihedral1}
  &\check{\delta}_{0}^{\pm}=\check{\delta}_{2(n-1)}^{\pm}={\delta'}_{0}^{\pm}, \qquad  \ \ \ \ \ \ \ \
  \check{\delta}_{i}=\check{\delta}_{2(n-1)-i}=\delta'_{i} \ \ (i=1,2,\ldots,n-2),  \\ \label{e:res-dihedral2}
  &\check{\delta}_{n-1}={\delta'}_{n-1}^{+}\oplus {\delta'}_{n-1}^{-}.
\end{align}

On the other hand, the induced characters of $\chi_{\delta'_i}$ can be written as
\begin{equation}\label{rifor}
  \chi_{\hat{\delta}'_k} =\sum\limits_{i=0}^{2(n-1)}(\chi_{\delta_i} , \chi_{\hat{\delta}'_k} )_{G}\chi_{\delta_i}
  =\sum\limits_{i=0}^{2(n-1)}(\chi_{\check{\delta}_i} , \chi_{\delta'_k} )_{H}\chi_{\delta_i} ,
\end{equation}
where $(\chi_{\delta_i} , \chi_{\hat{\delta}'_k} )_{G}=(\chi_{\check{\delta}_i} , \chi_{\delta'_k} )_{H}$
by the Frobenius reciprocity (see \cite{Kar}). In view of \eqref{e:res-dihedral1}-\eqref{e:res-dihedral2} the equation
\eqref{rifor} implies that:
\begin{align*}
  & \hat{\delta'}_{0}^{\pm}=\delta_{0}^{\pm}\oplus\delta_{2(n-1)}^{\pm}, \qquad \  \ \ \ \ \ \ \  \hat{\delta'}_i =\delta_{i}\oplus\delta_{2(n-1)-i} \ \ (i=1,2,\ldots,n-2), \\ 
  & \hat{\delta'}_{n-1}^{+}=\hat{\delta'}_{n-1}^{-}=\delta_{n-1}. 
\end{align*}

Using the imbedding $\pi=\delta'_1$ for the set $\{\check{\delta}_0^{\pm}, \check{\delta}_i |1\leq i\leq n-1\}$ and
$\pi=\delta_1$ for the set $\{\hat{\delta'}_0^{\pm}, \hat{\delta'}_i, \hat{\delta'}_{n-1}^{+}|1\leq i\leq n-2\}$, the
fusion rules are easily computed as follows (by using those of ${\rm D}_{2n}^{(1)}$ or ${\rm D}_{n+1}^{(1)}$):
\begin{align*}
  &\delta_1'\otimes \check\delta_{0}^{\pm}=\check\delta_1, \qquad \qquad  \qquad \qquad  \qquad  \ \ \ \ \ \ \ \
  \delta_{1}\otimes \hat{\delta'}_{0}^{+}=\delta_{1}' \otimes \hat{\delta'}_{0}^{-} = \hat{\delta'}_1,   \\
  &\delta_1'\otimes \check\delta_1=\check\delta_{0}^+\oplus\check\delta_{0}^-\oplus\check\delta_2, \qquad \qquad \qquad  \ \ \ \ \ \
   \delta_{1}\otimes \hat{\delta'}_1=\hat{\delta'}_{0}^{+}\oplus\hat{\delta'}_{0}^{-}\oplus\hat{\delta'}_2, \\
  &\delta_1'\otimes \check\delta_i=\check\delta_{i-1}\oplus\check\delta_{i+1} \ (2\leq i\leq n-2), \ \ \ \ \ \
   \delta_{1}\otimes \hat{\delta'}_i =\hat{\delta'}_{i-1}\oplus\hat{\delta'}_{i+1} \ (2\leq i\leq n-3),  \\
  &\delta_1'\otimes \check\delta_{n-1}=2\check\delta_{n-2}, \qquad \qquad  \qquad \qquad   \ \ \ \ \ \
  \delta_{1}\otimes \hat{\delta'}_{n-2}=\hat{\delta'}_{n-3}\oplus 2\hat{\delta'}_{n-1}^{+},   \\
  &\hskip 6.3cm
  \delta_{1}\otimes \hat{\delta'}_{n-1}^{+}=\hat{\delta'}_{n-2}.
\end{align*}
The corresponding Dynkin diagrams are ${\rm A}_{2n-1}^{(2)}$ (i.e. $\mathcal{R}_{\delta_1'}(\check{D}_{2(n-1)})$) and ${\rm B}_n^{(1)}$
(i.e. $\mathcal{R}_{\delta_1}(\hat{D}_{n-1})$) respectively.
The numbers inside the nodes are the degrees of the characters.

\begin{center}
\begin{tikzpicture}
\matrix (m) [matrix of math nodes, row sep=0.0005mm, column sep=0.5cm]
    {    \check{\delta}_{0}^-  &  &   &  &   &  & \\
           &  \check{\delta}_1 & \check{\delta}_{2}  &
       \cdots & \check{\delta}_{n-3}  & \check{\delta}_{n-2} & \\
         \check{\delta}_{0}^+ &  &   &  &   &  & \\
          &   &   &  &   &  &  \check{\delta}_{n-1} \\
          \check{\delta}_{2(n-1)}^+  &   &   &  &   &  &  \\
          & \check{\delta}_{2n-3} & \check{\delta}_{2n-4}  &
       \cdots &  \check{\delta}_{n+1} & \check{\delta}_{n} & \\
       \check{\delta}_{2(n-1)}^-   &   &   &  &   &  &  \\};
\path
(m-1-1) edge (m-2-2) (m-3-1) edge (m-2-2) (m-7-1) edge (m-6-2) (m-5-1) edge (m-6-2)
(m-2-2) edge (m-2-3) (m-2-5) edge (m-2-6) (m-2-6) edge (m-4-7)
(m-6-2) edge (m-6-3) (m-6-5) edge (m-6-6) (m-6-6) edge (m-4-7)
(m-2-3) edge[double] (m-6-3) (m-2-2) edge[double] (m-6-2)
(m-2-6) edge[double] (m-6-6) (m-2-5) edge[double] (m-6-5);
\draw[-][double] (m-1-1) to [bend right=50] (m-7-1);
\draw[-][double] (m-3-1) to [bend right=50] (m-5-1);
\draw[line width=0.4pt] (-0.75,1.35)--(0,1.35) (0.65,1.35)--(1.15,1.35);
\draw[line width=0.4pt] (-0.5,-1.28)--(0,-1.28) (0.65,-1.28)--(1.2,-1.28);
\end{tikzpicture}
\end{center}

\begin{center}
\begin{tikzpicture}
\draw node{} node at(2.2,1){$1$} (2.2,1)circle[radius=0.22] (2.2,0.78)--(2.2,0.2);
\draw node{} node at(0.6,0){$1$} (0.6,0)circle[radius=0.2] (0.8,0)--(2,0);
\draw node at(2.2,0){$2$} (2.2,0)circle[radius=0.2] (2.4,0)--(3.6,0);
\draw  node at(3.8,0){$2$} (3.8,0)circle[radius=0.2](4,0)--(5.2,0);
\draw  node at(5.53,0){$\cdots$} (5.8,0)--(6.98,0);
\draw node{} node at(7.2,0){$2$} (7.2,0)circle[radius=0.22] (7.42,0)--(8.58,0);
\draw node at(8.8,0){$2$} (8.8,0)circle[radius=0.22];
\draw [->,>=angle 90] (9.03,0)--(9.02,0);
\draw (9.04,0.02)--(10.18,0.02) (9.04,-0.02)--(10.18,-0.02);
\draw node at(10.4,0){$2$} (10.4,0)circle[radius=0.22];
\end{tikzpicture}
\end{center}

\ \ \ \ \ \ \ \ \ \ \ \ \ \ \ \ \ \ \ \

\begin{center}
\ \ \ \ \ \begin{tikzpicture}
\matrix (m) [matrix of math nodes, row sep=0.1cm, column sep=0.8cm]
    {    \hat{\delta'}_{0}^{+} &  &   &  &   &  & \hat{\delta'}_{{n-1}}^{+} \\
           &  \hat{\delta}_{1}' & \hat{\delta}_{2}'  &
       \cdots & \hat{\delta}_{n-3}'  & \hat{\delta}_{n-2}' & \\
        \hat{\delta'}_{0}^{-} &   &   &  &   &  & \hat{\delta'}_{{n-1}}^{-} \\};
\path
(m-1-1) edge (m-2-2) (m-3-1) edge (m-2-2) (m-1-7) edge (m-2-6) (m-3-7) edge (m-2-6)
(m-2-2) edge (m-2-3)
(m-2-5) edge (m-2-6);
\draw[-][double] (m-1-7) to [bend right=-30] (m-3-7);
\draw[line width=0.4pt] (-1.65,-0.02)--(-0.85,-0.02) (-0.25,-0.02)--(0.55,-0.02);
\end{tikzpicture}
\end{center}

\begin{center}
\begin{tikzpicture}
\draw node{} node at(2.2,1){$2$} (2.2,1)circle[radius=0.22] (2.2,0.78)--(2.2,0.2);
\draw node{} node at(0.6,0){$2$} (0.6,0)circle[radius=0.2] (0.8,0)--(2,0);
\draw node at(2.2,0){$4$} (2.2,0)circle[radius=0.2] (2.4,0)--(3.6,0);
\draw  node at(3.8,0){$4$} (3.8,0)circle[radius=0.2](4,0)--(5.2,0);
\draw  node at(5.53,0){$\cdots$} (5.8,0)--(6.98,0);
\draw node{} node at(7.2,0){$4$} (7.2,0)circle[radius=0.22] (7.42,0)--(8.58,0);
\draw node at(8.8,0){$4$} (8.8,0)circle[radius=0.22];
\draw [->,>=angle 90] (10.18,0)--(10.19,0);
\draw (9.02,0.02)--(10.16,0.02) (9.02,-0.02)--(10.16,-0.02);
\draw node at(10.4,0){$2$} (10.4,0)circle[radius=0.22];
\end{tikzpicture}
\end{center}

\subsection{Realizations of ${\rm D_{n+1}^{(2)}}$ and ${\rm C_{n}^{(1)}}$ by the pair ($D_{n}$,$C_{2n}$)}\label{s:typeDC}

The cyclic group $C_{2n}=\langle x\rangle$ is a normal subgroup of $D_n=\langle x, y\rangle$ with index $2$.
The restriction $\chi_{\check{\delta}_i}(x)=\theta_{2n}^i+\theta_{2n}^{-i}=\chi_{\xi_i}(x)+\chi_{\xi_{2n-i}}(x)$, where
$\chi_{\xi_i}$ are irreducible characters of $C_{2n}$, then
$\check{\delta}_i=\xi_i+\xi_{2n-i}$ for $1\leq i\leq n-1$. Dually one has that
$\hat{\xi}_i=\delta_i$ for $1\leq i\leq n-1$. In this way, one obtains that
\begin{align*}
  &\check{\delta}_{0}^{\pm}=\xi_{0},  \ \ \ \  \ \ \ \ \ \ \ \ \ \ \check{\delta}_i=\xi_i\oplus\xi_{2n-i}\ (i=1,2,\ldots,n-1),
   \ \ \ \  \check{\delta}_{n}^{\pm}=\xi_{n},  \\
  &\hat{\xi}_0=\delta_{0}^+\oplus\delta_{0}^-, \ \ \ \ \   \hat{\xi}_i=\hat{\xi}_{2n-i}=\delta_i\ (i=1,2,\ldots,n-1),
  \ \ \ \    \hat{\xi}_{n}=\delta_{n}^+\oplus\delta_{n}^-.
\end{align*}

Note that the embedding in this case is taken as either
$\xi_1\oplus\xi_{-1}=\check{\delta}_1$ or $\hat{\xi}_1=\delta_1$. If follows from the fusion rules of $C_{2n}$ and $D_n$ modules that
\begin{align*}
   & (\xi_1\oplus\xi_{-1})\otimes \check{\delta}_{0}^+ =\check{\delta}_1, \qquad \ \ \ \ \ \ \ \ \ \ \ \ \ \ \ \ \ \ \ \
     \delta_{1}\otimes \hat{\xi}_{0}=2\hat{\xi}_1,\\
   & (\xi_1\oplus\xi_{-1})\otimes \check{\delta}_1=2\check{\delta}_{0}^+ \oplus \check{\delta}_{2}, \qquad \ \ \ \ \ \ \ \ \
    \delta_{1}\otimes \hat{\xi}_{1}=\hat{\xi}_0\oplus \hat{\xi}_2,\\
   &(\xi_1\oplus\xi_{-1})\otimes \check{\delta}_i =\check{\delta}_{i-1}\oplus \check{\delta}_{i+1}, \qquad \ \ \ \ \ \
    \delta_{1}\otimes \hat{\xi}_i=\hat{\xi}_{i-1}\oplus \hat{\xi}_{i+1} \ (2\leq i\leq n-2), \\
   & (\xi_1\oplus\xi_{-1})\otimes \check{\delta}_{n-1}=\check{\delta}_{n-2}\oplus 2\check{\delta}_{n}^+, \qquad
     \delta_{1}\otimes \hat{\xi}_{n-1}=\hat{\xi}_{n-2}\oplus \hat{\xi}_{n}, \\
   & (\xi_1\oplus\xi_{-1})\otimes \check{\delta}_{n}^+= \check{\delta}_{n-1}, \qquad \ \ \ \ \ \ \ \ \ \ \ \ \ \ \
    \delta_{1}\otimes \hat{\xi}_{n}=2\hat{\xi}_{n-1}.
\end{align*}
Therefore the representation graph $\mathcal{R}_{(\xi_1+\xi_{-1})}(\check{D}_{n})$ realizes the Dynkin diagram ${\rm D_{n+1}^{(2)}}$
and $\mathcal{R}_{\delta_1}(\hat{C}_{2n})$ realizes the Dynkin diagram ${\rm C_n^{(1)}}$, which are depicted as follows.

\begin{center}
\begin{tikzpicture}
\matrix (m) [matrix of math nodes, row sep=0.1cm, column sep=0.8cm]
    {    \check{\delta}_{0}^+ &  &   &  &   &  & \check{\delta}_{n}^+ \\
           &  \check{\delta}_{1} & \check{\delta}_{2}  &
       \cdots & \check{\delta}_{n-2}  & \check{\delta}_{n-1} & \\
        \check{\delta}_{0}^- &   &   &  &   &  & \check{\delta}_{n}^- \\};
\path
(m-1-1) edge (m-2-2) (m-3-1) edge (m-2-2) (m-1-7) edge (m-2-6) (m-3-7) edge (m-2-6)
(m-2-2) edge (m-2-3) (m-2-5) edge (m-2-6);
\draw[-][double] (m-1-1) to [bend right=30] (m-3-1);
\draw[-][double] (m-1-7) to [bend right=-30] (m-3-7);
\draw[line width=0.4pt] (-1.55,-0.015)--(-0.7,-0.015) (-0.1,-0.015)--(0.7,-0.015);
\end{tikzpicture}
\end{center}

\begin{center}
\begin{tikzpicture}
\draw node at(0.2,0){$1$} (0.2,0)circle[radius=0.2];
\draw [->,>=angle 90] (0.41,0)--(0.4,0);
\draw (0.42,0.02)--(1.6,0.02) (0.42,-0.02)--(1.6,-0.02);
\draw node at(1.8,0){$2$} (1.8,0)circle[radius=0.2] (2,0)--(3.2,0);
\draw  node at(3.4,0){$2$} (3.4,0)circle[radius=0.2](3.6,0)--(4.8,0);
\draw  node at(5.13,0){$\cdots$} (5.4,0)--(6.58,0);
\draw node{} node at(6.8,0){$2$} (6.8,0)circle[radius=0.22] (7.02,0)--(8.18,0);
\draw node at(8.4,0){$2$} (8.4,0)circle[radius=0.22];
\draw [->,>=angle 90] (9.78,0)--(9.79,0);
\draw (8.62,0.02)--(9.76,0.02) (8.62,-0.02)--(9.76,-0.02);
\draw node at(10,0){$1$} (10,0)circle[radius=0.22];
\end{tikzpicture}
\end{center}

\ \ \ \ \ \ \ \ \ \ \ \ \ \ \ \ \ \ \

\begin{center}
\begin{tikzpicture}
\matrix (m) [matrix of math nodes, row sep=0.06cm, column sep=0.7cm]
    {     & \hat{\xi}_{1} & \hat{\xi}_{2}  & \cdots & \hat{\xi}_{n-2}  & \hat{\xi}_{n-1} &   \\
        \hat{\xi}_{0}   &   &   &  &    &   & \hat{\xi}_{n} \\
          & \hat{\xi}_{2n-1}  & \hat{\xi}_{2n-2}  & \cdots  & \hat{\xi}_{n+2}  & \hat{\xi}_{n+1} & \\};
\path
(m-2-1) edge (m-1-2) (m-1-2) edge (m-1-3)
(m-1-5) edge (m-1-6) (m-1-6) edge (m-2-7)
(m-2-1) edge (m-3-2) (m-3-2) edge (m-3-3)
(m-3-5) edge (m-3-6) (m-3-6) edge (m-2-7)
(m-1-2) edge[double] (m-3-2) (m-1-3) edge[double] (m-3-3)
(m-1-5) edge[double] (m-3-5) (m-1-6) edge[double] (m-3-6);
\draw[line width=0.4pt] (-1.2,0.68)--(-0.2,0.68) (0.45,0.68)--(1.15,0.68);
\draw[line width=0.4pt] (-0.95,-0.73)--(-0.2,-0.73) (0.45,-0.73)--(1.15,-0.73);
\end{tikzpicture}
\end{center}

\begin{center}
\begin{tikzpicture}
\draw node at(0.2,0){$2$} (0.2,0)circle[radius=0.2];
\draw [->,>=angle 90] (1.6,0)--(1.61,0);
\draw (0.4,0.02)--(1.58,0.02) (0.4,-0.02)--(1.58,-0.02);
\draw node at(1.8,0){$2$} (1.8,0)circle[radius=0.2] (2,0)--(3.2,0);
\draw  node at(3.4,0){$2$} (3.4,0)circle[radius=0.2](3.6,0)--(4.8,0);
\draw  node at(5.13,0){$\cdots$} (5.4,0)--(6.58,0);
\draw node{} node at(6.8,0){$2$} (6.8,0)circle[radius=0.22] (7.02,0)--(8.18,0);
\draw node at(8.4,0){$2$} (8.4,0)circle[radius=0.22];
\draw [->,>=angle 90] (8.63,0)--(8.62,0);
\draw (8.64,0.02)--(9.78,0.02) (8.64,-0.02)--(9.78,-0.02);
\draw node at(10,0){$2$} (10,0)circle[radius=0.22];
\end{tikzpicture}
\end{center}

\subsection{Realizations of ${\rm A_{2n}^{(2)}}$ and ${\rm C_{n}^{(1)}}$ by the pair ($D_{2n}$,$C_{2n}$)}

Fix $n\geq 2$, the binary dihedral group $D_{2n}=\langle x, y\rangle$ contains the normal cyclic group $C_{2n}=\langle x^2\rangle$ with index
$4$. Similar to subsection \ref{s:typeDC} the general restriction $\chi_{\check\delta_i}(x^2)=\theta_{4n}^{2i}+\theta_{4n}^{-2i}
=\theta_{2n}^i+\theta_{2n}^{-i}$ for $1\leq i\leq 2n-1$, i.e. $\check\delta_i=\xi_i+\xi_{2n-i}$, where
$\xi_i$ are irreducible modules of $C_{2n}$. As for induction, $\hat\xi_i=\delta_i+\delta_{2n-i}$ due to
$\langle x^2\rangle\vartriangleleft\langle x\rangle \vartriangleleft D_{2n}$. Detailed restriction and induction relations are given as follows:
\begin{align*}
  &\check{\delta}_{0}^{\pm}=\check{\delta}_{2n}^{\pm}=\xi_{0},  \ \ \ \ \ \ \ \ \ \ \ \ \ \ \ \ \ \ \ \ \ \ \ \ \ \
  \check{\delta}_i=\check{\delta}_{2n-i}=\xi_i\oplus\xi_{2n-i}\ (i=1,2,\ldots, n), \\ 
  & \hat\xi_{0}=\delta_{0}^+\oplus\delta_{0}^-\oplus\delta_{2n}^+\oplus\delta_{2n}^-,   \ \ \ \ \ \ \
   \hat\xi_i=\hat\xi_{2n-i}=\delta_i \oplus\delta_{2n-i}\ (i=1,2,\ldots,n). 
\end{align*}
Using the embeddings $\check\delta_1=\xi_1\oplus\xi_{-1}: C_{2n}\hookrightarrow \mathrm{SU}_2$ and
$\delta_1: D_{2n}\hookrightarrow \mathrm{SU}_2$, one has the following fusion relations:
\begin{align*}
   & (\xi_1\oplus\xi_{-1})\otimes \check{\delta}_{0}^+=\check{\delta}_1,\ \ \ \ \ \ \qquad \qquad \ \ \ \ \ \
      \delta_1\otimes\hat\xi_0=2\hat\xi_1,   \\
   & (\xi_1\oplus\xi_{-1})\otimes \check{\delta}_1=2\check{\delta}_{0}^+\oplus\check{\delta}_{2},\ \ \ \ \ \ \qquad \ \ \ \
      \delta_1\otimes\hat\xi_1=\hat\xi_0\oplus\hat\xi_2,   \\
   &(\xi_1\oplus\xi_{-1})\otimes \check{\delta}_i=\check{\delta}_{i-1}\oplus\check{\delta}_{i+1}, \ \ \ \ \ \ \ \ \ \ \ \ \ \
      \delta_1\otimes\hat\xi_i=\hat\xi_{i-1}\oplus\hat\xi_{i+1} \  (2\leq i\leq n-2),   \\
   &(\xi_1\oplus\xi_{-1})\otimes \check{\delta}_{n-1}=\check{\delta}_{n-2}\oplus\check{\delta}_{n}, \ \ \ \qquad
      \delta_1\otimes\hat\xi_{n-1}=\hat\xi_{n-2}\oplus\hat\xi_{n},   \\
   & (\xi_1\oplus\xi_{-1})\otimes \check{\delta}_{n}=2\check{\delta}_{n-1}, \ \ \qquad \qquad \ \ \ \
      \delta_1\otimes\hat\xi_n=2\hat\xi_{n-1}.
\end{align*}
Therefore the representation graphs $\mathcal{R}_{(\xi_1+\xi_{-1})}(\check{D}_{2n})$ and $\mathcal{R}_{\delta_1}(\hat{C}_{2n})$
realize the twisted affine Dynkin diagram of type ${\rm A_{2n}^{(2)}}$ and the non-simply laced affine Dynkin diagram
of type ${\rm C_n^{(1)}}$, respectively. The exact relations are shown in the following diagrams, where the numbers
indicate the degrees of characters.

\begin{center}
\begin{tikzpicture}
\matrix (m) [matrix of math nodes, row sep=0.05mm, column sep=0.6cm]
    {   \check{\delta}_{0}^+  &   &   &  &   &  & \\
           &  \check{\delta}_1 & \check{\delta}_{2}  &
       \cdots & \check{\delta}_{n-2}  & \check{\delta}_{n-1} & \\
        \check{\delta}_{0}^- &   &   &  &   &  & \\
         &   &   &  &   &  &  \check{\delta}_{n} \\
       \check{\delta}_{2n}^+  &   &   &  &   &  &  \\
        & \check{\delta}_{2n-1} & \check{\delta}_{2n-2}  &
       \cdots &  \check{\delta}_{n+2} & \check{\delta}_{n+1} & \\
        \check{\delta}_{2n}^-  &   &   &  &   &  &  \\};
\path
(m-1-1) edge (m-2-2) (m-3-1) edge (m-2-2) (m-5-1) edge (m-6-2) (m-7-1) edge (m-6-2)
(m-2-2) edge (m-2-3) (m-2-5) edge (m-2-6) (m-2-6) edge (m-4-7)
(m-6-2) edge (m-6-3) (m-6-5) edge (m-6-6) (m-6-6) edge (m-4-7)
(m-2-2) edge[double] (m-6-2) (m-2-3) edge[double] (m-6-3)
(m-2-5) edge[double] (m-6-5) (m-2-6) edge[double] (m-6-6);
\draw[-][double] (m-1-1) to [bend right=40] (m-3-1);
\draw[-][double] (m-3-1) to [bend right=40] (m-5-1);
\draw[-][double] (m-5-1) to [bend right=40] (m-7-1);
\draw[line width=0.4pt] (-1,1.25)--(-0.1,1.25)  (0.5,1.25)--(1.2,1.25);
\draw[line width=0.4pt] (-0.7,-1.3)--(-0.1,-1.3) (0.5,-1.3)--(1.2,-1.3);
\end{tikzpicture}
\end{center}

\begin{center}
\begin{tikzpicture}
\draw node at(0.2,0){$1$} (0.2,0)circle[radius=0.2];
\draw [->,>=angle 90] (0.41,0)--(0.4,0);
\draw (0.42,0.02)--(1.6,0.02) (0.42,-0.02)--(1.6,-0.02);
\draw node at(1.8,0){$2$} (1.8,0)circle[radius=0.2] (2,0)--(3.2,0);
\draw  node at(3.4,0){$2$} (3.4,0)circle[radius=0.2](3.6,0)--(4.8,0);
\draw  node at(5.13,0){$\cdots$} (5.4,0)--(6.6,0);
\draw node{} node at(6.8,0){$2$} (6.8,0)circle[radius=0.2] (7,0)--(8.2,0);
\draw node at(8.4,0){$2$} (8.4,0)circle[radius=0.2];
\draw [->,>=angle 90] (8.61,0)--(8.6,0);
\draw (8.62,0.02)--(9.8,0.02) (8.62,-0.02)--(9.8,-0.02);
\draw node at(10,0){$2$} (10,0)circle[radius=0.2];
\end{tikzpicture}
\end{center}

\ \ \ \ \ \ \ \ \ \ \ \ \ \ \ \ \ \ \

\begin{center}
\begin{tikzpicture}
\matrix (m) [matrix of math nodes, row sep=0.06cm, column sep=0.7cm]
    {     & \hat{\xi}_{1} & \hat{\xi}_{2}  & \cdots & \hat{\xi}_{n-2}  & \hat{\xi}_{n-1} &   \\
        \hat{\xi}_{0}   &   &   &  &    &   & \hat{\xi}_{n} \\
          & \hat{\xi}_{2n-1}  & \hat{\xi}_{2n-2}  & \cdots  & \hat{\xi}_{n+2}  & \hat{\xi}_{n+1} & \\};
\path
(m-2-1) edge (m-1-2) (m-1-2) edge (m-1-3)
(m-1-5) edge (m-1-6) (m-1-6) edge (m-2-7)
(m-2-1) edge (m-3-2) (m-3-2) edge (m-3-3)
(m-3-5) edge (m-3-6) (m-3-6) edge (m-2-7)
(m-1-2) edge[double] (m-3-2) (m-1-3) edge[double] (m-3-3)
(m-1-5) edge[double] (m-3-5) (m-1-6) edge[double] (m-3-6);
\draw (-1.2,0.68)--(-0.2,0.68) (0.45,0.68)--(1.15,0.68);
\draw (-0.95,-0.73)--(-0.2,-0.73) (0.45,-0.73)--(1.15,-0.73);
\end{tikzpicture}
\end{center}

\begin{center}
\begin{tikzpicture}
\draw node at(0.2,0){$4$} (0.2,0)circle[radius=0.2];
\draw (0.4,0.02)--(1.58,0.02) (0.4,-0.02)--(1.58,-0.02);
\draw node at(1.8,0){$4$} (1.8,0)circle[radius=0.2] (2,0)--(3.2,0);
\draw [->,>=angle 90] (1.59,0)--(1.6,0);
\draw  node at(3.4,0){$4$} (3.4,0)circle[radius=0.2](3.6,0)--(4.8,0);
\draw  node at(5.13,0){$\cdots$} (5.4,0)--(6.6,0);
\draw node{} node at(6.8,0){$4$} (6.8,0)circle[radius=0.2] (7,0)--(8.2,0);
\draw node at(8.4,0){$4$} (8.4,0)circle[radius=0.2];
\draw [->,>=angle 90] (8.61,0)--(8.6,0);
\draw (8.62,0.02)--(9.8,0.02) (8.62,-0.02)--(9.8,-0.02);
\draw node at(10,0){$4$} (10,0)circle[radius=0.2];
\end{tikzpicture}
\end{center}

\subsection{Realizations of ${\rm E_6^{(2)}}$ and ${\rm F_4^{(1)}}$ by the pair ($O$,$T$)}

As explained in subsection \ref{s:octa}, when inducing up from the seven irreducible $T$-modules,
there are only five different induced modules: $\hat{\tau}_{0}, \hat{\tau}_{1}, \hat{\tau}_{2}, \hat{\tau}_{1}'$, $\hat{\tau}_{0}'$.
Correspondingly, when restricting down the eight irreducible $O$-modules, there are only
five distinct $T$-modules: $\check{\omega}_0^+,\check{\omega}_1^+,
\check{\omega}_2^+,\check{\omega}_{3}$, $\check{\omega}_{4}$.

The induced modules decompose themselves into sum of irreducible modules as follows:
\begin{align*}
\hat{\tau}_{0}=\omega_0^+\oplus\omega_0^-,  \ \  \ \ \hat{\tau}_{1}=\omega_1^+\oplus\omega_1^-, \ \ \ \ \hat{\tau}_2=\omega_2^+\oplus\omega_2^-,
      \ \ \ \  \hat{\tau}_{1}'=\hat{\tau}_{1}''=\omega_{3}, \ \ \ \ \ \hat{\tau}_{0}'=\hat{\tau}_{0}''=\omega_{4},
\end{align*}
while the restriction also decomposes into irreducible modules:
\begin{align*}
  &\check{\omega}_0^+=\check{\omega}_0^-=\tau_{0},   \ \  \ \ \check{\omega}_1^+=\check{\omega}_1^-=\tau_{1},
  \ \  \ \ \check{\omega}_2^+=\check{\omega}_2^-=\tau_{2},
    \ \  \  \check{\omega}_{3}=\tau_{1}'\oplus\tau_{1}'', \ \ \ \ \check{\omega}_{4}=\tau_{0}'\oplus\tau_{0}''.
\end{align*}
Using the imbedding $\tau_{1}: T\hookrightarrow \mathrm{SU}_2$ and ${\omega}_1^+: O\hookrightarrow \mathrm{SU}_2$, the fusion products are given explicitly as follows,
\begin{align*}
  &\tau_{1}\otimes \check{\omega}_0^+=\check{\omega}_1^+, \ \ \ \ \ \ \ \ \ \ \ \ \ \ \ \ \ \ \ \ \ \ \ \ \
   \omega_1^+\otimes \hat{\tau}_{0}=\hat{\tau}_{1},\\
  &\tau_{1}\otimes \check{\omega}_1^+=\check{\omega}_0^+\oplus\check{\omega}_2^+, \ \ \ \ \ \ \ \ \ \ \ \ \ \ \
   \omega_1^+\otimes \hat{\tau}_{1}=\hat{\tau}_{0}\oplus \hat{\tau}_{2},\\
  &\tau_{1}\otimes \check{\omega}_2^+=\check{\omega}_1^+\oplus \check{\omega}_{3}, \ \ \ \ \ \ \ \  \ \ \ \ \ \ \ \
   \omega_1^+\otimes \hat{\tau}_{2}=\hat{\tau}_{1}\oplus 2\hat{\tau}_{1}',\\
  &\tau_{1}\otimes \check{\omega}_{3}=2\check{\omega}_2^+\oplus \check{\omega}_{4},\ \ \ \ \ \ \ \ \ \ \ \ \ \ \
    \omega_1^+\otimes \hat{\tau}_{1}'=\hat{\tau}_{2}\oplus \hat{\tau}_{0}',\\
  &\tau_{1}\otimes \check{\omega}_{4}=\check{\omega}_{3},\ \ \ \ \ \ \ \ \ \ \ \ \ \ \ \ \ \ \ \ \ \ \ \ \ \ \ \
    \omega_1^+\otimes \hat{\tau}_{0}'=\hat{\tau}_{1}'.
\end{align*}
Therefore the representation graph $\mathcal{R}_{\tau_{1}}(\check{O})$ realizes the Dynkin diagram of type ${\rm E_6^{(2)}}$
and the representation graph $\mathcal{R}_{\omega_1^+}(\hat{T})$ realizes the Dynkin diagram of type ${\rm F_4^{(1)}}$.
\begin{center}
\begin{tikzpicture}
\matrix (m) [matrix of math nodes, row sep=0.04cm, column sep=0.8cm]
    { \check{\omega}_0^+ & \check{\omega}_1^+ &  \check{\omega}_2^+ &   & \\
        &   &  & \check{\omega}_{3} &  \check{\omega}_{4}   \\
      \check{\omega}_0^- & \check{\omega}_1^- &  \check{\omega}_2^- &   & \\ };
\path
(m-1-1) edge (m-1-2)  (m-1-3) edge (m-2-4)
(m-3-1) edge (m-3-2)  (m-3-3) edge (m-2-4)
(m-2-4) edge (m-2-5)
(m-1-1) edge[double] (m-3-1) (m-1-2) edge[double] (m-3-2)
(m-1-3) edge[double] (m-3-3);
\draw[line width=0.4pt] (-1.2,0.6)--(-0.3,0.6)  (-1.2,-0.6)--(-0.3,-0.6);
\end{tikzpicture}
\ \ \ \ \ \ \
\begin{tikzpicture}
\matrix (m) [matrix of math nodes, row sep=0.05cm, column sep=0.9cm]
    {   &   &   &  \hat{\tau}_{1}' & \hat{\tau}_{0}' \\
      \hat{\tau}_{0}  & \hat{\tau}_{1}  & \hat{\tau}_2 &   &    \\
        &   &    &  \hat{\tau}_{1}'' & \hat{\tau}_{0}'' \\ };
\path
(m-1-4) edge (m-1-5) (m-2-3) edge (m-1-4)
(m-3-4) edge (m-3-5) (m-2-3) edge (m-3-4)
(m-2-1) edge (m-2-2) (m-2-2) edge (m-2-3)
(m-1-4) edge[double] (m-3-4)
(m-1-5) edge[double] (m-3-5);
\end{tikzpicture}
\end{center}

\begin{center}
\begin{tikzpicture}
\draw node at(0.2,0){$1$} (0.2,0)circle[radius=0.2] (0.4,0)--(1.4,0);
\draw node at(1.6,0){$2$} (1.6,0)circle[radius=0.2] (1.8,0)--(2.8,0);
\draw  node at(3,0){$3$} (3,0)circle[radius=0.2];
\draw [->,>=angle 90] (3.21,0)--(3.2,0);
\draw (3.22,0.02)--(4.2,0.02) (3.22,-0.02)--(4.2,-0.02);
\draw  node at(4.4,0){$4$} (4.4,0)circle[radius=0.2] (4.6,0)--(5.6,0)
node at(5.8,0){$2$} (5.8,0)circle[radius=0.2];
\end{tikzpicture}
\ \ \ \ \ \ \ \ \ \ \ \ \ \ \
\begin{tikzpicture}
\draw node at(0.2,0){$2$} (0.2,0)circle[radius=0.2] (0.4,0)--(1.4,0);
\draw node at(1.6,0){$4$} (1.6,0)circle[radius=0.2] (1.8,0)--(2.8,0);
\draw  node at(3,0){$6$} (3,0)circle[radius=0.2];
\draw [->,>=angle 90] (4.2,0)--(4.21,0);
\draw (3.2,0.02)--(4.18,0.02) (3.2,-0.02)--(4.18,-0.02);
\draw  node at(4.4,0){$4$} (4.4,0)circle[radius=0.2] (4.6,0)--(5.6,0)
node at(5.8,0){$2$} (5.8,0)circle[radius=0.2];
\end{tikzpicture}
\end{center}

\subsection{Realizations of ${\rm D_4^{(3)}}$ and ${\rm G_2^{(1)}}$ by the pair ($T$,$D_2$)}

The binary dihedral group $D_2$ has five irreducible modules $\delta_0^{\pm}, \delta_1, \delta_2^{\pm}$. As $D_2$ is a normal subgroup
of the binary tetrahedral group $T$ (with index $3$), the irreducible $D_2$-modules are induced to only three distinct modules of $T$
as follows:
\begin{align*}
&\hat{\delta}_{0}^{+}=\tau_{0}\oplus\tau_{0}'\oplus\tau_{0}'', \ \ \ \ \ \hat{\delta}_1=\tau_{1}\oplus\tau_{1}'\oplus\tau_{1}'', \ \ \ \ \
  \hat{\delta}_{2}^+=\hat{\delta}_{2}^-=\hat{\delta}_{0}^-=\tau_2,
\end{align*}
where $\{\tau_0^{(i)}, \tau_1^{(i)}, \tau_2|i=0, 1, 2\}$ form the complete set of irreducible $T$-modules.

Correspondingly, the restrictions of the 7 irreducible $T$-modules give rise to three distinct $D_2$-modules: 
\begin{align*}
  &\check{\tau}_{0}=\check{\tau}_{0}'=\check{\tau}_{0}''=\delta_{0}^+, \ \ \ \ \ \check{\tau}_{1}=\check{\tau}_{1}'=\check{\tau}_{1}''=\delta_{1},
  \ \ \ \ \ \check{\tau}_2=\delta_{2}^+\oplus\delta_{0}^-\oplus\delta_{2}^-.
\end{align*}

Using the embedding $\delta_1=\check{\tau}_{1}$ and $\tau_1$, we get the fusion rules by those of ${{\rm E_6^{(1)}}}$ and ${\rm D_4^{(1)}}$
respectively as follows:
\begin{align*}
  &\delta_1\otimes \check{\tau}_{0}=\check{\tau}_{1}, \qquad \ \ \ \ \ \ \ \ \
  \tau_{1}\otimes \hat{\delta}_{0}^+ = \hat{\delta}_1, \\
  &\delta_1\otimes \check{\tau}_{1}=\check{\tau}_{0}\oplus \check{\tau}_2, \qquad
  \tau_{1}\otimes \hat{\delta}_1 = \hat{\delta}_{0}^+\oplus 3\hat{\delta}_{2}^+, \\
  &\delta_1\otimes \check{\tau}_2=3\check{\tau}_{1}, \qquad \ \ \ \ \ \ \
  \tau_{1}\otimes \hat{\delta}_{2}^+ = \hat{\delta}_1.
\end{align*}
Thus the representation graphs
$\mathcal{R}_{\delta_1}(\check{T})$
and $\mathcal{R}_{\tau_{1}}(\hat{D}_2)$
realize the Dynkin diagrams  ${\rm D_4^{(3)}}$ and ${\rm G_2^{(1)}}$ respectively.

\begin{center}
\begin{tikzpicture}
\matrix (m) [matrix of math nodes, row sep=0.25cm, column sep=1cm]
    { \check{\tau}_{0} & \check{\tau}_{1} &   \\
      \check{\tau}_{0}' & \check{\tau}_{1}' & \check{\tau}_2   \\
      \check{\tau}_{0}'' & \check{\tau}_{1}'' &   \\ };
\path
(m-1-1) edge (m-1-2) (m-1-2) edge (m-2-3)
(m-2-1) edge (m-2-2) (m-2-2) edge (m-2-3)
(m-3-1) edge (m-3-2) (m-3-2) edge (m-2-3)
(m-1-1) edge[double] (m-2-1) (m-1-2) edge[double] (m-2-2)
(m-2-1) edge[double] (m-3-1) (m-2-2) edge[double] (m-3-2);
\end{tikzpicture}
\ \ \ \ \ \ \ \ \ \ \ \
\begin{tikzpicture}
\matrix (m) [matrix of math nodes, row sep=0.25cm, column sep=1cm]
    {  &  & \hat{\delta}_{2}^+  \\
      \hat{\delta}_{0}^+ & \hat{\delta}_1 & \hat{\delta}_{2}^-   \\
       &  &  \hat{\delta}_{0}^- \\ };
\path
(m-1-3) edge (m-2-2)
(m-2-1) edge (m-2-2) (m-2-2) edge (m-2-3)
(m-3-3) edge (m-2-2)
(m-1-3) edge[double] (m-2-3)
(m-3-3) edge[double] (m-2-3);
\end{tikzpicture}
\end{center}

\begin{center}
\begin{tikzpicture}
\draw node at(0.2,0){$1$} (0.2,0)circle[radius=0.2] (0.4,0)--(1.6,0);
\draw node at(1.8,0){$2$} (1.8,0)circle[radius=0.2];
\draw [->,>=angle 90]
(3.2,0.04)--(2.05,0.04) (3.2,-0.04)--(2.05,-0.04) (3.2,0)--(2.0,0);
\draw node{ } node at(3.4,0){$3$} (3.4,0)circle[radius=0.2];
\end{tikzpicture}
\ \ \ \ \ \ \ \ \ \ \ \ \ \ \ \
\begin{tikzpicture}
\draw node at(0.2,0){$3$} (0.2,0)circle[radius=0.2] (0.4,0)--(1.6,0);
\draw node at(1.8,0){$6$} (1.8,0)circle[radius=0.2];
\draw [->,>=angle 90]
(2.0,0.04)--(3.15,0.04) (2.0,-0.04)--(3.15,-0.04) (2.0,0)--(3.2,0);
\draw node{ } node at(3.4,0){$3$} (3.4,0)circle[radius=0.2];
\end{tikzpicture}
\end{center}

\subsection{Realizations of ${\rm A_{2}^{(2)}}$ and ${\rm A_1^{(1)}}$ by the pair ($D_{2}$,$C_{2}$)}

The Dynkin diagram ${\rm A_1^{(1)}}$ was realized by the McKay correspondence, this subsection gives another realization
as a by-product of the McKay-Slodowy correspondence. The binary dihedral group $D_2$ has the cyclic group $C_2$ as a normal subgroup
of index 4, so the induction of the two irreducible $C_2$ modules $\xi_i$ still gives two different modules of $D_2$, while the restriction
of the five irreducible modules of $D_2$ is down to two different modules of $C_2$. The exact relations of the restriction and
induction go as follows:
\begin{align*}
&\check{\delta}_{0}^{\pm}=\check{\delta}_{2}^{\pm}=\xi_{0},  \qquad \qquad \qquad \ \ \ \ \ \ \ \check{\delta}_{1}=2\xi_{1}, \\
& \hat\xi_0=\delta_{0}^+\oplus\delta_{0}^-\oplus\delta_{2}^+\oplus\delta_{2}^-,\qquad  \ \ \ \ \ \hat\xi_1=2\delta_1,
\end{align*}
where as usual the set of irreducible modules are
$D_2^*=\{\delta_0^{\pm}, \delta_1, \delta_2^{\pm}\}$ and $C_2^*=\{\xi_0, \xi_1\}$. 

The embeddings $2\xi_1: C_2\hookrightarrow \mathrm{SU}_2$ and $\delta_1: D_2\hookrightarrow \mathrm{SU}_2$ determine the
following fusion rule (using those of ${\rm D}_4^{(1)}$ and ${\rm A_1^{(1)}}$):
\begin{align*}
   & 2\xi_1\otimes \check{\delta}_{0}^+=\check{\delta}_{1}, \qquad \qquad \qquad \ \ \ \ \ \ \ \ \
      \delta_1\otimes\hat\xi_0=2\hat\xi_1, \\
   & 2\xi_1\otimes \check{\delta}_{1}=4\check{\delta}_{0}^+, \qquad \qquad \qquad \ \ \ \ \ \ \
     \delta_1\otimes\hat\xi_1=2\hat\xi_0.
\end{align*}
Subsequently the representation graphs $\mathcal{R}_{2\xi_1}(\check{D}_{2})$ and $\mathcal{R}_{\delta_1}(\hat{C}_2)$
realize the affine Dynkin diagram ${\rm A_2^{(2)}}$ and the affine Dynkin diagram ${\rm A_{1}^{(1)}}$, respectively.

\begin{center}
\ \begin{tikzpicture}
\matrix (m) [matrix of math nodes, row sep=0.18cm, column sep=1cm]
    {  \check{\delta}_{0}^+ &  & \check{\delta}_{2}^+  \\
        & \check{\delta}_1 &   \\
       \check{\delta}_{0}^- &   & \check{\delta}_{2}^-  \\ };
\path
(m-1-1) edge (m-2-2) (m-1-3) edge (m-2-2)
(m-3-1) edge (m-2-2) (m-3-3) edge (m-2-2)
(m-1-1) edge[double] (m-3-1) (m-1-3) edge[double] (m-3-3)
(m-1-1) edge[double] (m-1-3) (m-3-1) edge[double] (m-3-3);
\end{tikzpicture}
\ \ \ \ \ \ \ \ \ \ \ \ \ \ \
\begin{tikzpicture}
\matrix (m) [matrix of math nodes, row sep=0.68cm, column sep=1cm]
  {    &  &   \\
   \hat\xi_0 &   & \hat\xi_1  \\
         &   &   \\ };
\path
(m-2-1) edge[double] (m-2-3);
\draw [->,>=angle 90] (1,0)--(1.01,0);
\draw [->,>=angle 90] (-1,0)--(-1.01,0);
\end{tikzpicture}
\end{center}

\begin{center}
\ \ \ \ \ \begin{tikzpicture}
\draw node{} node at(0.2,0){$1$} (0.2,0)circle[radius=0.2];
\draw node at(2.6,0){$2$} (2.6,0)circle[radius=0.2];
\draw [->,>=angle 90] (0.41,0)--(0.4,0);
\draw
(2.4,0.054)--(0.47,0.054) (2.4,-0.054)--(0.47,-0.054)
(2.4,0.018)--(0.43,0.018) (2.4,-0.018)--(0.43,-0.018);
\end{tikzpicture}
\ \ \ \ \ \ \ \ \ \ \ \ \ \ \ \ \ \ \ \ \ \ \ \
\begin{tikzpicture}
\draw node{} node at(0.2,0){$4$} (0.2,0)circle[radius=0.2];
\draw node at(2.6,0){$4$} (2.6,0)circle[radius=0.2];
\draw [->,>=angle 90] (0.41,0)--(0.4,0);
\draw [->,>=angle 90] (2.4,0)--(2.41,0);
\draw
(2.37,0.02)--(0.43,0.02) (2.37,-0.02)--(0.43,-0.02);
\end{tikzpicture}
\end{center}

\begin{remark}
In subsections $2.2, 2.3, 2.5$ and $2.6$, $\alpha_{\rm \widetilde{A}}=|G:N|^{-1}(\mathrm{dim}\hat\phi_i)_{i\in \hat{\mathrm{I}}}$
and $\alpha_{\rm \widetilde{B}}=(\mathrm{dim}\check\rho_i)_{i\in\check{\mathrm{I}}}$ are unique eigenvectors (up to constants) with zero eigenvalue for
the Cartan matrices ${\rm C_{\widetilde{A}}}$ and ${\rm C_{\widetilde{B}}}$ respectively, while in
subsection $2.4$ and $2.7$, $\alpha_{\rm \widetilde{B}}$ and $\alpha_{\rm \widetilde{A}}$ are eigenvectors with zero eigenvalue for
${\rm C_{\widetilde{A}}}$ and ${\rm C_{\widetilde{B}}}^{T}$ respectively.
Here $\mathrm{dim}\check\rho_i$ $(i\in \check{\mathrm{I}})$
and $|G:N|^{-1}\mathrm{dim}\hat\phi_i$ $(i\in \hat{\mathrm{I}})$ are two sets of
relatively prime integers. 
\end{remark}

\section{Poincar\'{e} series}

\subsection{Poincar\'{e} series for a pair of finite groups $N \unlhd G$}

Let $G$ be a finite group and $V$ be a $G$-module, the Poincar\'e series $m_V(t)$ for the
$G$-invariants in the tensor algebra $T(V)=\displaystyle\bigoplus_{k=0}^{\infty} V^{\otimes k}$
was given by Benkart \cite{Ben} and she
shows that the general Poincar\'e series for a $G$-irreducible
module also admits a nice compact formula similar to the situation of the symmetric tensors. In particular, when $G$ is a finite subgroup
of $\mathrm{SU}_2$, the Poincar\'e series can be computed via the McKay correspondence.
Moreover, the exponents of the affine Lie algebras of simply laced type
are realized via the Poincar\'e series $m_V(t)$, except when $G$ is a cyclic group of odd order,
i.e. the Dynkin diagram ${\rm A}_{2n}^{(1)}$.

To understand the general case and recover the missing
exponents for all affine Lie algebras, we consider the Poincar\'e series for certain modules that are
both $N$-modules and $G$-modules for a given pair of finite groups $N\lhd G$ in view of the McKay-Slodowy correspondence.
We will show that
the Poincar\'{e} series for both $V$ and $V|_N$ in the tensor algebra
$T(V)$ will provide an answer in the general situation. First we recall the following results.

\begin{lemm} {\rm\cite[Cor. 18.7.1]{Kar}} \label{modisom}
Let $\chi_{\phi_i}$ and $\chi_{\phi_j}$
be the characters of $N$ afforded by simple modules $\phi_i$ and $\phi_j$ respectively. Then the induced $G$-modules
$\hat\phi_i \cong \hat\phi_j$ if and only if $\chi_{\phi_i}$ and $\chi_{\phi_j}$ are $G$-conjugate.
\end{lemm}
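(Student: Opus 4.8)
The plan is to prove both directions of the biconditional using standard character-theoretic computations involving the induced character formula and Mackey-type restriction to conjugacy data. Since $\hat\phi_i\cong\hat\phi_j$ as $G$-modules is equivalent to the equality of their characters $\chi_{\hat\phi_i}=\chi_{\hat\phi_j}$ (characters determine modules up to isomorphism in characteristic zero), I would reduce the statement to a criterion for when two induced characters coincide.

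First I would recall the explicit formula for the induced character. For a simple $N$-module $\phi$ with character $\chi_\phi$, the induced character is
\begin{equation*}
\chi_{\hat\phi}(g)=\frac{1}{|N|}\sum_{\substack{s\in G\\ s^{-1}gs\in N}}\chi_\phi(s^{-1}gs),
\end{equation*}
which vanishes whenever $g$ is not $G$-conjugate into $N$. The key observation is that for $g\in N$ this expression can be rewritten in terms of the $G$-conjugates of $\phi$: if for $s\in G$ we define the twisted character $\chi_\phi^{\,s}(n)=\chi_\phi(s^{-1}ns)$ on $N$ (using normality of $N$ so that $s^{-1}ns\in N$), then $\chi_\phi^{\,s}$ is again an irreducible character of $N$, namely that of the $G$-conjugate module. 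Summing over a transversal of $N$ in $G$ shows that $\chi_{\hat\phi}|_N$ is (up to the factor $|G:N|$ distributed appropriately) the sum over the distinct $G$-conjugates of $\phi$, each counted with multiplicity equal to the size of its inertia subgroup over $N$.

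For the backward direction, if $\chi_{\phi_i}$ and $\chi_{\phi_j}$ are $G$-conjugate, say $\chi_{\phi_j}=\chi_{\phi_i}^{\,s}$ for some $s\in G$, then the substitution $t\mapsto ts$ in the induction sum is a bijection on $G$ that carries the defining sum for $\chi_{\hat\phi_j}$ to that for $\chi_{\hat\phi_i}$, giving $\chi_{\hat\phi_i}=\chi_{\hat\phi_j}$ and hence $\hat\phi_i\cong\hat\phi_j$. For the forward direction, I would suppose $\hat\phi_i\cong\hat\phi_j$ and apply Frobenius reciprocity: the multiplicity of $\phi_i$ in the restriction $\mathrm{Res}_N\hat\phi_i$ equals $(\chi_{\hat\phi_i},\chi_{\hat\phi_i\to})$ computed via reciprocity, and the set of irreducible $N$-constituents of $\mathrm{Res}_N\hat\phi_i$ is exactly the $G$-orbit of $\phi_i$ under conjugation (this is the content of Clifford theory for the restriction of an induced module). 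If $\hat\phi_i\cong\hat\phi_j$ then their restrictions agree, so $\phi_i$ and $\phi_j$ lie in the same $G$-orbit, i.e. their characters are $G$-conjugate.

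The main obstacle will be cleanly identifying the constituents of $\mathrm{Res}_N\hat\phi_i$ with the $G$-conjugates of $\phi_i$ and correctly accounting for multiplicities, which is where Clifford theory enters; one must verify that $\phi_i$ itself occurs in $\mathrm{Res}_N\hat\phi_i$ so that comparing restrictions genuinely detects the orbit. Since this is precisely the statement quoted as \cite[Cor.~18.7.1]{Kar}, the cleanest route is to invoke the standard Clifford-theoretic description of restricted induced modules rather than rederiving it; I would therefore present the forward direction via Frobenius reciprocity and the orbit description, and the backward direction via the explicit conjugation substitution in the induction formula, noting that the two directions together give the biconditional.
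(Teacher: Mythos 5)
Your argument is correct, but note that the paper itself offers no proof of this lemma --- it is quoted verbatim from Karpilovsky \cite[Cor.~18.7.1]{Kar} and used as a black box --- so there is nothing internal to compare against. Your two halves are the standard ones: the backward direction by the substitution $t\mapsto ts$ in the induced character formula (using normality of $N$ so that conjugation by $s$ preserves the condition $t^{-1}gt\in N$), and the forward direction by observing that the irreducible constituents of $\mathrm{Res}_N\hat\phi_i$ form exactly the $G$-orbit of $\phi_i$, so that isomorphic inductions force the orbits to coincide. The one point you rightly flag --- that $\phi_i$ genuinely occurs in $\mathrm{Res}_N\hat\phi_i$ --- is immediate either from the identity coset in the Mackey decomposition $\mathrm{Res}_N\mathrm{Ind}_N^G\phi_i\cong\bigoplus_{s\in G/N}{}^{s}\phi_i$ or from $(\chi_{\hat\phi_i},\chi_{\hat\phi_i})_G=(\chi_{\phi_i},\mathrm{Res}_N\chi_{\hat\phi_i})_N\ge 1$, so the argument closes. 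Your parenthetical about each conjugate being counted with multiplicity ``the size of its inertia subgroup over $N$'' should read the index $|H:N|$ of $N$ in the inertia group $H$, but this multiplicity plays no role in the proof, which only uses the set of constituents.
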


\begin{lemm} {\rm\cite[Cor. 18.7.5]{Kar}} \label{number}
Let $N$ be a normal subgroup of $G$. 
Then the number of nonisomorphic $G$-modules 
induced from the simple $N$-modules is equal
to the number of conjugacy classes of $G$ contained in $N$.
\end{lemm}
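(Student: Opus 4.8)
The plan is to reduce the count of isomorphism classes of induced modules to a count of orbits, and then to match two orbit counts via a Brauer-type fixed-point equality. First I would record the two natural conjugation actions of $G$. Since $N\lhd G$, each $g\in G$ induces an automorphism of $N$, hence permutes the set $\mathrm{Irr}(N)$ of irreducible characters by $\chi\mapsto\chi^g$ with $\chi^g(n)=\chi(g^{-1}ng)$, and simultaneously permutes the set of ordinary conjugacy classes of $N$ by $K\mapsto gKg^{-1}$. By Lemma~\ref{modisom}, two simple $N$-modules induce isomorphic $G$-modules precisely when their characters are $G$-conjugate, so the number of nonisomorphic induced modules equals the number of $G$-orbits on $\mathrm{Irr}(N)$.

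Next I would reinterpret the target quantity. Because $N$ is normal, the $G$-conjugacy class of any $n\in N$ is contained in $N$, so the conjugacy classes of $G$ contained in $N$ are exactly the $G$-orbits on $N$ under conjugation. Each such orbit is a union of ordinary $N$-conjugacy classes that $G$ permutes among themselves, and hence the number of conjugacy classes of $G$ inside $N$ equals the number of $G$-orbits on the set of conjugacy classes of $N$. After these two reductions the statement becomes the single equality
\[
\#\{G\text{-orbits on }\mathrm{Irr}(N)\}=\#\{G\text{-orbits on conjugacy classes of }N\}.
\]

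To prove this, I would apply Burnside's orbit-counting formula to both sides: each count is the average over $g\in G$ of the number of fixed points of $g$ on the respective set, so it suffices to show, for each individual $g$, that the number of irreducible characters of $N$ fixed by $g$ equals the number of conjugacy classes of $N$ fixed by $g$. This is the key step. I would establish it by letting $X=(\chi_i(K_j))$ be the character table of $N$ and letting $P$ and $Q$ be the permutation matrices recording the $g$-action on $\mathrm{Irr}(N)$ and on the classes. The compatibility $\chi^g(gKg^{-1})=\chi(K)$ translates, with suitable conventions, into a matrix identity of the form $PX=XQ$; since $X$ is invertible by the orthogonality relations, $P$ and $Q$ are conjugate and therefore $\mathrm{tr}(P)=\mathrm{tr}(Q)$. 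As the trace of a permutation matrix counts its fixed points, the desired per-element equality follows.

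The main obstacle is precisely this fixed-point equality: the delicate part is setting up the intertwining relation between the two permutation actions and the character table with the correct direction of the $g$-action, and then using invertibility of $X$ to pass from the intertwining to genuine conjugacy of $P$ and $Q$. Once that is in hand, Burnside's lemma promotes it to the equality of orbit numbers displayed above, and combining with the two reductions of the first paragraphs yields the claim.
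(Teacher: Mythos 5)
Your argument is correct. The paper itself offers no proof of this lemma --- it is quoted verbatim from Karpilovsky [Kar, Cor.\ 18.7.5] --- and your route (reduce both counts to orbit counts, apply the Cauchy--Frobenius/Burnside formula, and settle the per-element fixed-point equality by Brauer's permutation lemma, i.e.\ the intertwining $PX=XQ$ with the invertible character table $X$ forcing $\mathrm{tr}(P)=\mathrm{tr}(Q)$) is precisely the standard proof underlying the cited corollary. The only point requiring care, the direction of the conjugation action in the identity $\chi^g(gKg^{-1})=\chi(K)$, is handled correctly, and the reduction of ``nonisomorphic induced modules'' to ``$G$-orbits on $\mathrm{Irr}(N)$'' is exactly what Lemma~\ref{modisom} supplies, so the proof is complete.
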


\begin{lemm} {\rm\cite[Cor. 18.11.2]{Kar}}\label{Clifford} (Clifford's theorem)
Let $N$ be a normal subgroup of $G$ and
let $\rho$ be a simple $G$-module. 
There exists a simple submodule $\phi$ of $\check\rho$ and
an integer $e\geq 1$ such that $\check\rho\cong e(\oplus_{t\in T}{}^t\phi)$,
where $H$ is the inertia group of $\phi$,
$T$ is a left transversal for $H$ in $G$ and the conjugates ${}^t\phi\ (t\in T)$
of $\phi$ are pairwise non-isomorphic simple $N$-modules.
\end{lemm}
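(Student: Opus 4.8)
The plan is to prove Clifford's theorem directly from complete reducibility over $\mathbb{C}$ together with the conjugation action of $G$ on the $N$-submodules of $\check\rho$. First I would pick any simple $N$-submodule $\phi\subseteq\check\rho$; such a $\phi$ exists because $\check\rho$ is a nonzero finite-dimensional $N$-module and hence semisimple by Maschke's theorem in characteristic $0$. For each $g\in G$, normality of $N$ guarantees that $\rho(g)\phi$ is again an $N$-submodule of $\check\rho$: for $u$ in the space of $\phi$ and $n\in N$ one has $\rho(n)\rho(g)u=\rho(g)\rho(g^{-1}ng)u$ with $g^{-1}ng\in N$, so $\rho(g)\phi$ is $N$-stable. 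Identifying $\rho(g)\phi$ with $\phi$ via $\rho(g)$ shows that $n$ acts there by $\phi(g^{-1}ng)$, i.e. $\rho(g)\phi\cong{}^g\phi$ as $N$-modules.

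Next I would form $W=\sum_{g\in G}\rho(g)\phi$. By construction $W$ is stable under both $N$ and $G$, hence a nonzero $G$-submodule of $\rho$; since $\rho$ is simple, $W=\rho$. Therefore $\check\rho=\sum_{g\in G}\rho(g)\phi$ is a sum of simple $N$-submodules, which confirms that $\check\rho$ is semisimple and that every simple constituent of $\check\rho$ is a $G$-conjugate of $\phi$. I would then organize these conjugates by isomorphism type: setting $H=\{g\in G:{}^g\phi\cong\phi\}$ for the \emph{inertia group} and choosing a left transversal $T$ for $H$ in $G$, the distinct isomorphism classes among the ${}^g\phi$ are exactly $\{{}^t\phi:t\in T\}$, and these are pairwise non-isomorphic simple $N$-modules. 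Decomposing $\check\rho$ into its $N$-isotypic components then gives $\check\rho\cong\bigoplus_{t\in T}e_t\,{}^t\phi$ for some multiplicities $e_t\geq 1$.

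The main point, and the step I expect to require the most care, is showing that all the multiplicities $e_t$ coincide. Here I would exploit that the linear automorphism $\rho(g)$ of $\check\rho$ carries the ${}^s\phi$-isotypic component onto the ${}^{gs}\phi$-isotypic component: since ${}^g({}^s\phi)\cong{}^{gs}\phi$ and $\rho(g)$ intertwines the $N$-action up to the twist $n\mapsto g^{-1}ng$, it restricts to an $N$-module isomorphism between these two isotypic pieces, which preserves the multiplicity of the simple summand. Because $G$ permutes the isotypic components transitively, as every conjugate of $\phi$ actually occurs by the previous paragraph, this forces $e_s=e_t$ for all $s,t\in T$, yielding a common value $e$. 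Hence $\check\rho\cong e\bigl(\oplus_{t\in T}{}^t\phi\bigr)$, as claimed. The only subtlety beyond bookkeeping is verifying that the twist does not alter the isotypic multiplicity, which follows because $\rho(g)$ is a genuine $\mathbb{C}$-linear bijection carrying one isotypic block onto another.
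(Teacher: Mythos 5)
Your proof is correct and complete: it is the standard argument for Clifford's theorem — pick a simple $N$-submodule $\phi$, observe that normality of $N$ makes each $\rho(g)\phi$ a simple $N$-submodule isomorphic to ${}^g\phi$, use simplicity of $\rho$ to conclude $\check\rho=\sum_g\rho(g)\phi$, group the conjugates by the inertia subgroup $H$, and equalize the multiplicities via the fact that $\rho(g)$ carries isotypic components bijectively onto one another (the dimension count you invoke at the end is exactly what is needed, since $\rho(g)$ is only an intertwiner up to the twist). The paper itself offers no proof of this lemma — it simply cites it as Cor.\ 18.11.2 of Karpilovsky's book — so there is nothing to compare against; your argument is the one found in the standard references.
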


With these preparations, we have the following result.

\begin{lemm}\label{reinnumber}
 Let $N\unlhd G$, 
and let $\{\rho_i|i\in \mathrm{I_G}\}$ (resp. $\{\phi_i|i\in \mathrm{I_N}\}$) be
the set of pairwise nonisomorphic $\mathbb C$-irreducible modules of $G$ (resp. $N$). 
Let $\{\check\rho_i|i\in \check{\mathrm{I}}\}$ be the set of non-isomorphic $N$-restrictions of irreducible $G$-modules 
with $\check\rho_i \cap \check\rho_j=0$ for $i,j\in\check{\mathrm{I}}$,
let $\{\hat\phi_i|i\in \hat{\mathrm{I}}\}$ be the set of non-isomorphic induced $G$-modules.
Then $|\{\check\rho_i|i\in \check{\mathrm{I}}\}|=|\{\hat\phi_i|i\in \hat{\mathrm{I}}\}|$,
and this common number is equal to $|\Upsilon(N)|$, where $\Upsilon(N)=\Upsilon\cap N$ and $\Upsilon$ is a
fixed set of conjugacy class representatives of $G$.
\end{lemm}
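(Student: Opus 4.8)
The plan is to show that both counts---the number of distinct $N$-restrictions and the number of distinct inductions---equal the number of $G$-orbits on the set $\mathrm{Irr}(N)$ of isomorphism classes of simple $N$-modules (under the conjugation action of $G$), and then to identify that orbit count with $|\Upsilon(N)|$ using the three preparatory lemmas already recorded.

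First I would treat the induced side. By Lemma~\ref{modisom}, $\hat\phi_i \cong \hat\phi_j$ precisely when the characters $\chi_{\phi_i}$ and $\chi_{\phi_j}$ are $G$-conjugate, i.e. when $\phi_i$ and $\phi_j$ lie in the same $G$-orbit of $\mathrm{Irr}(N)$. Hence $|\{\hat\phi_i\mid i\in\hat{\mathrm{I}}\}|$ equals the number of $G$-orbits on $\mathrm{Irr}(N)$. On the other hand, Lemma~\ref{number} identifies this same quantity with the number of conjugacy classes of $G$ contained in $N$; since $N$ is normal, every $G$-class meeting $N$ lies entirely in $N$, so the representatives in $\Upsilon$ that lie in $N$---namely $\Upsilon(N)=\Upsilon\cap N$---form a complete set of representatives of exactly these classes. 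This yields $|\{\hat\phi_i\}| = |\Upsilon(N)|$, and as a by-product the equality of the $G$-orbit count on $\mathrm{Irr}(N)$ with $|\Upsilon(N)|$.

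Next I would handle the restriction side via Clifford's theorem. By Lemma~\ref{Clifford}, for each simple $\rho_i$ the restriction $\check\rho_i \cong e_i\bigl(\bigoplus_{t\in T_i}{}^t\phi^{(i)}\bigr)$ is supported on a single $G$-orbit of $\mathrm{Irr}(N)$. Consequently two restrictions share a common irreducible $N$-constituent if and only if they are supported on the same orbit; equivalently, the disjointness requirement $\check\rho_i\cap\check\rho_j=0$ imposed for $i,j\in\check{\mathrm{I}}$ says exactly that distinct indices pick out distinct $G$-orbits. Moreover every orbit is realized: for any simple $N$-module $\phi$, Frobenius reciprocity gives $\mathrm{Hom}_G(\hat\phi,\rho)\cong\mathrm{Hom}_N(\phi,\check\rho)$, and choosing $\rho$ to be any irreducible constituent of the nonzero module $\hat\phi$ shows that $\phi$ occurs in some $\check\rho$. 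Therefore $\{\check\rho_i\mid i\in\check{\mathrm{I}}\}$ is in bijection with the full set of $G$-orbits on $\mathrm{Irr}(N)$, so $|\{\check\rho_i\}|$ also equals that orbit count.

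Combining the two computations yields $|\{\check\rho_i\}| = |\{\hat\phi_i\}| = |\Upsilon(N)|$, as claimed. The argument is essentially an assembly of Lemmas~\ref{modisom},~\ref{number} and~\ref{Clifford}, so no single step is genuinely hard; the point requiring the most care is translating the abstract disjointness condition $\check\rho_i\cap\check\rho_j=0$ into orbit language, which depends crucially on the Clifford decomposition being concentrated on a single orbit, together with the observation (using normality of $N$) that the induction count of Lemma~\ref{number} and the restriction count coincide through their common intermediary, the $G$-orbits on $\mathrm{Irr}(N)$.
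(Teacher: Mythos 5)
Your proof is correct and follows essentially the same route as the paper: both arguments use Lemma~\ref{modisom} to count the induced modules, Clifford's theorem (Lemma~\ref{Clifford}) plus the disjointness hypothesis to count the restrictions, Frobenius reciprocity to see that every orbit is hit, and Lemma~\ref{number} to identify the common count with $|\Upsilon(N)|$. The only difference is organizational --- the paper builds an explicit bijection $f(\check\rho_i)=\hat\phi_k$ determined by $\phi_k\lesssim\check\rho_i$, whereas you factor both counts through the set of $G$-orbits on $\mathrm{Irr}(N)$ --- but the substance is identical.
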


\begin{proof} We define a map $f:  \{\check\rho_i|i\in \check{\mathrm{I}}\}  \longrightarrow  \{\hat\phi_i|i\in \hat{\mathrm{I}}\}$ by
\begin{equation*}
f(\check{\rho}_i)=\hat{\phi}_k \quad \mbox{provided that}\quad \rho_i\lesssim {\hat\phi}_k \ \mbox{and} \ \phi_k\lesssim \check{\rho}_i,
\end{equation*}
where $\lesssim$ means isomorphic to a submodule. By Frobenius reciprocity $(\chi_{\rho_i}, \chi_{\hat{\phi}_k})_G=(\chi_{\check{\rho}_i}, \chi_{{\phi}_k})_H$, so the map is given simply as
$f(\check{\rho}_i)=\hat{\phi}_k$ provided that $\phi_k\lesssim \check{\rho}_i$.

We show the map is well-defined. Suppose $\check\rho_i\cong\check\rho_j$ for some $i,j\in\check{\mathrm{I}}$,
such that $f(\check\rho_i)=\hat\phi_k$ and $f(\check\rho_j)=\hat\phi_l$
with $\phi_k\lesssim \check{\rho}_i$ and $\phi_l\lesssim \check{\rho}_j$ respectively. Then $\phi_k$ and $\phi_l$ are $G$-conjugate by $\mathrm{Lemma}$ \ref{Clifford}.
Namely, $\chi_{\phi_k}$ and $\chi_{\phi_l}$ are $G$-conjugate. By $\mathrm{Lemma}$ \ref{modisom}, $\hat\phi_k\cong\hat\phi_l$.

We will verify $f$ is a bijective. The surjective is guaranteed by the definition of $f$.
Let $\check\rho_i\ncong\check\rho_j$ for any $i,j \in \check{\mathrm{I}}$, such that $f(\check\rho_i)=\hat\phi_k$ and
$f(\check\rho_j)=\hat\phi_l$ with $\phi_k\lesssim \check\rho_i$ and $\phi_l\lesssim \check\rho_j$ respectively.
Since $\check\rho_i\cap\check\rho_j=0$, we have $\phi_k\ncong \phi_l$. Thus $\chi_{\phi_k} \neq \chi_{\phi_l}$.
$N$ is a normal subgroup of $G$ implies that $\chi_{\phi_k}$ and $\chi_{\phi_l}$ are not
$G$-conjugate. By $\mathrm{Lemma}$ \ref{modisom}, there is $\hat\phi_k\ncong\hat\phi_l$. Then $f$ is an injective.

By $f$ is a bijective and $\mathrm{Lemma}$ \ref{number}, we have $|\{\check\rho_i|i\in \check{\mathrm{I}}\}|=|\{\hat\phi_i|i\in \hat{\mathrm{I}}\}|=
|\Upsilon(N)|$. This completes the proof.
\end{proof}

Let $G$ be a finite group equipped with a faithful module.
Steinberg \cite{Ste} studied the decomposition of the tensor product of the
faithful module and any irreducible module in terms of characters.
We have an analogous result for a pair of finite groups $N\unlhd G$ in view of $\mathrm{Lemma}$ \ref{reinnumber}. 

\begin{lemm} \label{eigenvalues}
Let $N\unlhd G$ be a pair of finite groups. Let $\{\check\rho_i|i\in \check{\mathrm{I}}\}$
(resp. $\{\hat\phi_i|i\in \hat{\mathrm{I}}\}$) be the set of $N$-restriction modules of irreducible $G$-modules
(resp. induced $G$-modules of irreducible $N$-modules).
Let $\rho$ be a faithful $N$-module which is also the restriction of a faithful $G$-module with $d=
\chi_{\rho}(1)$. Then,
\begin{enumerate}
  \item The column vectors $(\chi_{\check\rho_i}(g))$ and $(\chi_{\hat\phi_i}(g))$ are respectively the eigenvectors of the matrices $(d\delta_{ji}-a_{ij})$
  and $(d\delta_{ij}-b_{ij})$ with eigenvalue $d-\chi_{\rho}(g)$,
  where $g$ runs through $\Upsilon(N)=N\cap \Upsilon$, and $\Upsilon$ is a set of conjugacy class representatives of $G$.
\item In particular, the column vectors $(\check{d}_i)$ and $(\hat{d}_i)$
  are eigenvectors with eigenvalue $0$ respectively,
  where $\check{d}_i=\chi_{\check\rho_i}(1)$ and $\hat{d}_i=\chi_{\hat\phi_i}(1)$.
\end{enumerate}
\end{lemm}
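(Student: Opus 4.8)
The plan is to read both eigenvector statements directly off the two fusion decompositions in \eqref{1.2}, evaluated as character identities on the elements of $\Upsilon(N)$. The single hypothesis that links the two sides is that $\rho$ is the restriction of a faithful $G$-module $V$, so that $\rho=\check V=V|_N$ and in particular $\chi_V(g)=\chi_\rho(g)$ for every $g\in N$; this identity is what lets the $G$-character computation on the induction side produce the same scalar $\chi_\rho(g)$ that appears on the restriction side.

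I would begin with the restriction side. As recorded in the introduction, $\check V\otimes\mathrm{span}(\check\rho_i)\subset\mathrm{span}(\check\rho_i)$, so $\widetilde A=(a_{ij})$ is well defined and \eqref{1.2} reads $\rho\otimes\check\rho_j=\bigoplus_{i}a_{ij}\check\rho_i$ as $N$-modules. Taking characters at $g\in\Upsilon(N)\subset N$ gives
\begin{equation*}
\chi_\rho(g)\,\chi_{\check\rho_j}(g)=\sum_{i}a_{ij}\,\chi_{\check\rho_i}(g)\qquad(j\in\check{\mathrm I}).
\end{equation*}
Set $u_g=(\chi_{\check\rho_i}(g))_{i\in\check{\mathrm I}}$. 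For the matrix with $(p,i)$-entry $d\delta_{pi}-a_{ip}$, the displayed relation (with $j=p$) shows that its $p$-th output entry equals $d\,\chi_{\check\rho_p}(g)-\chi_\rho(g)\chi_{\check\rho_p}(g)$; hence $u_g$ is an eigenvector of $(d\delta_{ji}-a_{ij})$ with eigenvalue $d-\chi_\rho(g)$. Equivalently this matrix is $dI-\widetilde A^{T}$, and $u_g$ is a $\chi_\rho(g)$-eigenvector of $\widetilde A^{T}$.

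The induction side is entirely parallel once one knows the fusion closes among induced modules. By the projection formula $V\otimes\hat\phi_j\cong\mathrm{Ind}_N^G(\rho\otimes\phi_j)$, so $V\otimes\mathrm{span}(\hat\phi_i)\subset\mathrm{span}(\hat\phi_i)$ and \eqref{1.2} gives $V\otimes\hat\phi_j=\bigoplus_i b_{ij}\hat\phi_i$ as $G$-modules. Evaluating $G$-characters at $g\in\Upsilon(N)$ and substituting $\chi_V(g)=\chi_\rho(g)$ produces the same shape of identity, so $w_g=(\chi_{\hat\phi_i}(g))_{i\in\hat{\mathrm I}}$ is an eigenvector of $dI-\widetilde B^{T}$ with eigenvalue $d-\chi_\rho(g)$, which is the matrix $(d\delta_{ij}-b_{ij})$ of the statement. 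Part (2) is then the specialization $g=1$: since $\chi_\rho(1)=d$ the eigenvalue collapses to $0$, while $\chi_{\check\rho_i}(1)=\check d_i$ and $\chi_{\hat\phi_i}(1)=\hat d_i$ are precisely the dimension vectors.

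The computation itself is just taking traces, so the content lies in three pieces of bookkeeping. First, one must keep the transpose straight: \eqref{1.2} sums the multiplicity matrix over its \emph{first} index, so the operator having the character vectors as eigenvectors is $dI-\widetilde A^{T}$ rather than $dI-\widetilde A$, and one should check this agrees with the displayed entries (the Kronecker term is symmetric and hence harmless). I expect this transpose to be the only genuine trap, since an index slip there would misidentify the operator. Second, one invokes Lemma \ref{reinnumber}: $|\Upsilon(N)|=|\check{\mathrm I}|=|\hat{\mathrm I}|$ is exactly the size of the two matrices, which is why $\Upsilon(N)$ is the natural indexing set for the family of eigenvectors and why the $G$-conjugacy data inside $N$ (what the induced characters actually see) is the right device. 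Third, the projection formula is what guarantees that $V\otimes\hat\phi_j$ remains a sum of induced modules, so that $\widetilde B$ is defined at all.
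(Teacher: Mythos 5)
Your proof is correct and is exactly the intended argument: the paper states this lemma without proof, presenting it as the analogue of Steinberg's result, and Steinberg's argument is precisely the character evaluation of the two fusion decompositions \eqref{1.2} at elements of $\Upsilon(N)$, using $\chi_V(g)=\chi_\rho(g)$ for $g\in N$ on the induction side. Your attention to the transpose (the operator is $d\mathrm{I}-\widetilde{\mathrm{A}}^{T}$, resp. $d\mathrm{I}-\widetilde{\mathrm{B}}^{T}$) and to the projection formula guaranteeing that $\widetilde{\mathrm{B}}$ is defined are the right points to check, and both are handled correctly.
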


\begin{theo}\label{thm1}
Let $N\unlhd G$ be a pair of finite groups and
$\{\check\rho_i|i\in \check{\mathrm{I}}\}$ the set of complex $N$-restriction of irreducible $G$-modules.
Let $\mathrm{\widetilde{A}}$ be the adjacency matrix of the representation
graph {\rm $\mathcal{R}_{V}(\check{G})$} and ${\rm M}_1^{i}$ the matrix
$\mathrm{I}-t\mathrm{\widetilde{A}}^{T}$ with the $i$th column replaced by $\underline{\delta}=(1, 0, \ldots, 0)^T\in\mathbb R^{|\mathrm{\check I}|}$.
Assume that {\rm $V$} is a faithful (restriction) {\rm $N$}-module
such that $V\simeq V^*$.
Then the  Poincar\'e series {\rm $\check{m}^{i}(t)=\Sigma_{k\geq 0}\check{m}^
{i}_{k}t^{k}$} of the multiplicities of $\check\rho_i$ in {\rm $T(V)=\oplus_{k \geq 0}V^{\otimes k}$} is given by

{\rm \begin{equation}\label{ps}
   \check{m}^{i}(t)=\frac{\mathrm{det}(\mathrm{M}_1^{i})}{\mathrm{det}(\mathrm{I}-t\mathrm{\widetilde{A}}^{T})}
    =\frac{{\mathrm{det}}(\mathrm{M}_1^{i})}{\prod\limits_{g\in \Upsilon(N)}(1- \chi_{V}(g)t)},
\end{equation}}
where $\chi_{V}$ is the character of $V$ and $\Upsilon(N)=N\cap \Upsilon$, and $\Upsilon$ is a fixed set of conjugacy representatives of $G$.
\end{theo}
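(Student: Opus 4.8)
The plan is to turn the multiplicity count into a linear recursion driven by the adjacency matrix $\mathrm{\widetilde{A}}$, solve it with a geometric series, and then extract the two expressions by Cramer's rule and a character-theoretic diagonalization.

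First I would set up the recursion. Since $V$ is the restriction of a $G$-module and $\mathrm{Res}$ is a tensor functor, each $V^{\otimes k}$ again lies in $\mathrm{span}(\check\rho_i)$, and $V^{\otimes 0}$ is the trivial $N$-module, namely $\check\rho_1$ (the restriction of the trivial $G$-module). Writing $\check{m}_k=(\check{m}^{i}_{k})_{i\in\check{\mathrm{I}}}$ with $\check{m}^{i}_{k}=\dim\mathrm{Hom}_N(\check\rho_i,V^{\otimes k})$, I would pass from $\check{m}_{k-1}$ to $\check{m}_k$ via the adjunction
\[
\mathrm{Hom}_N(\check\rho_i,\,V\otimes W)\cong\mathrm{Hom}_N(V^{*}\otimes\check\rho_i,\,W)\cong\mathrm{Hom}_N(V\otimes\check\rho_i,\,W),\qquad W=V^{\otimes(k-1)},
\]
where the hypothesis $V\simeq V^{*}$ is exactly what gives the second isomorphism. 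Decomposing $V\otimes\check\rho_i=\bigoplus_l a_{li}\check\rho_l$ through the fusion rule \eqref{1.2} then yields $\check{m}^{i}_{k}=\sum_l a_{li}\check{m}^{l}_{k-1}=(\mathrm{\widetilde{A}}^{T}\check{m}_{k-1})_i$, so that $\check{m}_k=\mathrm{\widetilde{A}}^{T}\check{m}_{k-1}$ with $\check{m}_0=\underline{\delta}$. This is precisely where the transpose appears: self-duality of $V$ moves the tensor factor to the other argument of $\mathrm{Hom}$, turning the fusion recursion into an $\mathrm{\widetilde{A}}^{T}$-recursion for the $\mathrm{Hom}$-dimensions.

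Summing the geometric series gives $\check{m}(t)=\sum_{k\ge0}t^{k}(\mathrm{\widetilde{A}}^{T})^{k}\underline{\delta}=(\mathrm{I}-t\mathrm{\widetilde{A}}^{T})^{-1}\underline{\delta}$, which is legitimate as a formal power series because $\det(\mathrm{I}-t\mathrm{\widetilde{A}}^{T})$ has constant term $1$. Equivalently $(\mathrm{I}-t\mathrm{\widetilde{A}}^{T})\check{m}(t)=\underline{\delta}$, and Cramer's rule applied to this linear system, with the right-hand side $\underline{\delta}$ replacing the $i$th column, delivers $\check{m}^{i}(t)=\det(\mathrm{M}_1^{i})/\det(\mathrm{I}-t\mathrm{\widetilde{A}}^{T})$, which is the first equality.

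For the denominator I would diagonalize $\mathrm{\widetilde{A}}^{T}$ using $\mathrm{Lemma}$ \ref{eigenvalues}: for each $g\in\Upsilon(N)$ the column $v_g=(\chi_{\check\rho_i}(g))_i$ is an eigenvector of $\mathrm{\widetilde{A}}^{T}$ with eigenvalue $\chi_{V}(g)$. The essential point, which I expect to be the main obstacle, is to show that the $v_g$ form a basis, so that these exhaust the spectrum with the correct multiplicities. This holds because the restricted characters $\chi_{\check\rho_i}$ have pairwise disjoint irreducible constituents (as $\check\rho_i\cap\check\rho_j=0$) and are therefore linearly independent; being $G$-class functions supported on $N$, they lie in a space of dimension $|\Upsilon(N)|=|\check{\mathrm{I}}|$ (by $\mathrm{Lemma}$ \ref{reinnumber}), on which restriction to the transversal $\Upsilon(N)$ is injective. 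Hence the square matrix $(\chi_{\check\rho_i}(g))_{i,g}$ is nonsingular, its columns $v_g$ are independent, and $\mathrm{\widetilde{A}}^{T}$ is diagonalizable with spectrum $\{\chi_{V}(g)\}_{g\in\Upsilon(N)}$. Consequently $\det(\mathrm{I}-t\mathrm{\widetilde{A}}^{T})=\prod_{g\in\Upsilon(N)}(1-\chi_{V}(g)t)$, which gives the second equality and completes the argument.
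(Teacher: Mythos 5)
Your proof is correct and follows essentially the same route as the paper's: the recursion $(\mathrm{I}-t\mathrm{\widetilde{A}}^{T})\underline{m}=\underline{\delta}$ derived from the fusion rule \eqref{1.2}, Cramer's rule for the first equality, and the identification of the spectrum of $\mathrm{\widetilde{A}}^{T}$ with $\{\chi_{V}(g)\}_{g\in \Upsilon(N)}$ for the second. The only (welcome) difference is that you explicitly verify that the eigenvectors $(\chi_{\check\rho_i}(g))_{i}$ exhaust the spectrum, via the nonsingularity of the matrix $(\chi_{\check\rho_i}(g))_{i,g}$, a point the paper delegates to Lemma \ref{eigenvalues} without proof.
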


\begin{proof} Note that $\check{m}^{i}_{k}$=${\mathrm{dim}}(\mathrm{Hom}_{N}(\check\rho_i,V^{\otimes k}))$ and $\check\rho_i$ is trivial
iff $i=0$,
using the argument of \cite[Thm. 2.1]{Ben} it follows that
\begin{eqnarray}\label{3.1}
  \check{m}^{i}(t) &=& 
   \sum\limits_{k\geq 0}{\mathrm{dim}}(\mathrm{Hom}_{N}
   (\check\rho_i,V^{\otimes k}))t^k \nonumber \\
   &=& \delta_{i,0}+t\sum\limits_{k\geq 1}{\mathrm{dim}}(\mathrm{Hom}_{N}
   (V\otimes \check\rho_i ,V^{\otimes {k-1}}))t^{k-1} \nonumber \\
   &=& \delta_{i,0}+t\sum\limits_{k\geq 1}{\mathrm{dim}}(\sum\limits_j a_{ji}\mathrm{Hom}_{N}
   (\check\rho_j ,V^{\otimes {k-1}}))t^{k-1} \nonumber \\
    &=& \delta_{i,0}+t\sum\limits_j a_{ji}
    \sum\limits_{k\geq 0}{\mathrm{dim}}(\mathrm{Hom}_{N}
   (\check\rho_j,V^{\otimes k}))t^k \nonumber \\
    &=& \delta_{i,0}+t\sum\limits_j a_{ji}
    \check{m}^{j}(t).
\end{eqnarray}

Write $\underline{m}=(\check{m}^i(t))_{i\in \mathrm{\check I}}$, the column vector formed by Poincar\'e series, then \eqref{3.1} can be written as the
matrix identity: $({{\mathrm{I}}-t{\rm\widetilde{A}}^{T}})\underline{m}=\underline{\delta}$.
Then the first equality of \eqref{ps} follows from Cramer's rule.

Assume $\rm{ dimV=d}$, ${\rm \rm{d}}-\chi_{V}(g)$ be all eigenvalues of the matrix ${\rm dI}-{\rm \widetilde{A}}$,
where $g$ runs over the set $\Upsilon(N)$=$\Upsilon\cap N$ and $\Upsilon$ is a fixed set of representatives of conjugacy classes of $G$
(see $\mathrm{Lemma}$ \ref{eigenvalues}$(1)$). Therefore
${\mathrm{det}}(t\mathrm{I}-\mathrm{\widetilde{A}})=
  \Pi_{g\in\Upsilon(N)}(t-\chi_{V}(g))$.
Denote $n=|\check{\mathrm{I}}|=|\Upsilon(N)|$, then
\begin{equation*}
  {\mathrm{det}}(\mathrm{I}-t\mathrm{\widetilde{A}}^{T})=t^{n}{\mathrm{det}}(t^{-1}\mathrm{I}-\mathrm{\widetilde{A}}^{T})=
  t^{n}\prod\limits_{g\in\Upsilon(N)}(t^{-1}-\chi_{V}(g))=
  \prod\limits_{g\in\Upsilon(N)}(1-\chi_{V}(g)t),
\end{equation*}
which is the second identity in \eqref{ps}.
\end{proof}

The following result is obtained similarly as $\mathrm{Theorem}$ \ref{thm1}.

\begin{theo}\label{thm11}
Let $N\unlhd G$ be a pair of finite subgroups and
$\hat\phi_i (i\in\hat{\mathrm{I}})$ the induced
{\rm $G$}-module of an irreducible $N$-module $\phi_i$. 
Let {\rm $V$} be a finite-dimensional self-dual 
{\rm $G$}-module $V\simeq V^*$.
Let $\mathrm{\widetilde{B}}$ be the adjacency matrix of the representation
graph {\rm $\mathcal{R}_{V}(\hat{N})$} and ${\rm M}_2^{i}$ the matrix
$\mathrm{I}-t\mathrm{\widetilde{B}}^{T}$ with the $i$th column replaced by $\underline{\delta}=(1, 0, \ldots, 0)^T\in\mathbb R^{|\mathrm{\hat I}|}$.
Then the  Poincar\'e series {\rm $\hat{m}^{i}(t)=\Sigma_{k\geq 0}\hat{m}^
{i}_{k}t^{k}$} of the multiplicities of $\hat{\phi}_i$ in {\rm $T(V)=\oplus_{k \geq 0}V^{\otimes k}$} is given by

\begin{equation}\label{ps1}
   \hat{m}^{i}(t)=\frac{{\mathrm{det}}(\mathrm{M}_2^{i})}{{\mathrm{det}}(\mathrm{I}-t\mathrm{\widetilde{B}}^{T})}
   =\frac{{\mathrm{det}}(\mathrm{M}_2^{i})}{\prod\limits_{g\in \Upsilon(N)}(1-\chi_{V}(g)t)},
\end{equation}
where $\chi_{V}$ is the character of $V$, $\Upsilon(N)=N\cap \Upsilon$, and $\Upsilon$ is a fixed set of conjugacy representatives of $G$.
\end{theo}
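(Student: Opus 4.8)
The plan is to transcribe the argument of Theorem \ref{thm1}, replacing the restriction modules $\check\rho_i$ and fusion matrix $\widetilde{A}$ by the induced modules $\hat\phi_i$ and fusion matrix $\widetilde{B}$ of \eqref{1.2}. Write $\hat{m}^i_k = \dim\mathrm{Hom}_G(\hat\phi_i, V^{\otimes k})$ and index so that $\phi_0$ is the trivial $N$-module. For $k=0$ one has $V^{\otimes 0}=\mathbb{C}$, and Frobenius reciprocity gives $\hat{m}^i_0 = \dim\mathrm{Hom}_G(\hat\phi_i,\mathbb{C}) = \dim\mathrm{Hom}_N(\phi_i,\mathbb{C}) = \delta_{i,0}$. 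For $k\geq 1$ I would strip off one tensor factor and transport it across the Hom using self-duality: since $V\simeq V^*$, one has $\mathrm{Hom}_G(\hat\phi_i, V\otimes V^{\otimes(k-1)}) \cong \mathrm{Hom}_G(V\otimes\hat\phi_i, V^{\otimes(k-1)})$. Decomposing $V\otimes\hat\phi_i = \bigoplus_j b_{ji}\hat\phi_j$ via \eqref{1.2} produces the recursion $\hat{m}^i_k = \sum_j b_{ji}\hat{m}^j_{k-1}$, equivalently $\hat{m}^i(t) = \delta_{i,0} + t\sum_j b_{ji}\hat{m}^j(t)$ at the level of generating series, in exact parallel with \eqref{3.1}.

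Collecting these into the column vector $\underline{m} = (\hat{m}^i(t))_{i\in\hat{\mathrm{I}}}$, the recursion becomes the single matrix identity $(\mathrm{I} - t\widetilde{B}^T)\underline{m} = \underline{\delta}$, since the coefficient of $\hat{m}^j$ in the $i$th equation is $b_{ji} = (\widetilde{B}^T)_{ij}$. Solving for the $i$th component by Cramer's rule yields immediately $\hat{m}^i(t) = \det(M_2^i)/\det(\mathrm{I} - t\widetilde{B}^T)$, which is the first equality of \eqref{ps1}, because $M_2^i$ is by definition $\mathrm{I} - t\widetilde{B}^T$ with its $i$th column replaced by $\underline{\delta}$.

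For the second equality I would compute the denominator through the spectral data of Lemma \ref{eigenvalues}. Part (1) of that lemma identifies the column vectors $(\chi_{\hat\phi_i}(g))$ as eigenvectors of $d\mathrm{I} - \widetilde{B}$, where $d=\dim V$, with eigenvalue $d - \chi_V(g)$ as $g$ ranges over $\Upsilon(N) = N\cap\Upsilon$. By Lemma \ref{reinnumber} the number of such $g$ equals $n = |\hat{\mathrm{I}}|$, so these exhaust the spectrum and $\det(t\mathrm{I} - \widetilde{B}) = \prod_{g\in\Upsilon(N)}(t - \chi_V(g))$. Replacing $t$ by $t^{-1}$ and clearing denominators then gives $\det(\mathrm{I} - t\widetilde{B}^T) = \det(\mathrm{I} - t\widetilde{B}) = t^n\prod_{g\in\Upsilon(N)}(t^{-1} - \chi_V(g)) = \prod_{g\in\Upsilon(N)}(1 - \chi_V(g)t)$, the claimed denominator.

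The one genuinely delicate point, and the step I would verify most carefully, is the self-duality manipulation together with the index bookkeeping: one must confirm that the adjunction transports $V$ across the Hom to give $V\otimes\hat\phi_i$ (and not $V^*\otimes\hat\phi_i$), and that the fusion coefficients then assemble into the transpose $\widetilde{B}^T$ rather than $\widetilde{B}$ itself, matching the matrix appearing in the statement. A secondary caveat is that Lemma \ref{eigenvalues} presupposes that $V$ restricts to a faithful $N$-module; this faithfulness, which is the standing hypothesis in the cases of interest, is precisely what ensures the $n$ eigenvalues $d - \chi_V(g)$ occur with the correct multiplicity so that the characteristic polynomial factors completely and the second equality of \eqref{ps1} holds.
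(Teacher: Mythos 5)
Your proposal is correct and follows exactly the route the paper intends: the paper gives no separate proof of Theorem \ref{thm11}, stating only that it "is obtained similarly as Theorem \ref{thm1}," and your argument is precisely that transcription (Frobenius reciprocity for the $k=0$ term, self-duality to move $V$ across the Hom, the fusion rule \eqref{1.2} to get the recursion $(\mathrm{I}-t\widetilde{\mathrm{B}}^{T})\underline{m}=\underline{\delta}$, Cramer's rule, and Lemma \ref{eigenvalues} with Lemma \ref{reinnumber} to factor the denominator). The two caveats you flag (the $\widetilde{\mathrm{B}}^{T}$ bookkeeping and the faithfulness hypothesis needed for the spectral factorization) are handled correctly.
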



\begin{remark}
If $G=N$, both the identities \eqref{ps} and \eqref{ps1} coincide and specialize to the result \cite[Thm 2.1]{Ben}.
\end{remark}

\subsection{Example $A_4\lhd S_4$}

The alternating group $A_4=\langle(123), (124)\rangle$ $\simeq K_4\rtimes C_3$
and its four irreducible modules $\phi_i$ can be lifted from the
one-dimensional modules of the Klein subgroup $K_4$, where $\dim(\phi_i)=1 (i=0, 1, 2)$ and $\dim(\phi_3)=3$.
Similarly the irreducible $S_4$-modules $\rho_i$ can from induced by those of  $A_4$-modules as follows:
\begin{align*}
  &\hat\phi_0=\rho_{0}^+\oplus\rho_{0}^-, \ \ \ \ \ \ \ \ \ \  \hat\phi_{1}=\hat\phi_{2}=\rho_1,  \ \ \ \ \ \ \ \ \ \
  \hat\phi_3=\rho_{2}^+\oplus\rho_{2}^-.
\end{align*}
$\mathrm{Table}$ $4$ and $5$ list the character tables for $S_4$ and $A_4$.

\ \ $\textbf{Table 4}$ \ \ \ \ \ \ \ \ \ \ \ \ \ \ \ \ \ \ \ \ \  \  \ \ \ \ \ \ \ \ \ \
\ \ \ \ \ \  \  \ \ \ \ \ \ \ \ \ \ \ \ \ \ \ \  \  \ \ \ \ \ \ \ \ \ \
Character table of $S_4$.

\ \ \begin{tabular}{l@{\indent\indent\indent\indent\indent}c@{\indent\indent\indent\indent\indent}
c@{\indent\indent\indent\indent\indent}
c@{\indent\indent\indent\indent\indent}c@{\indent\indent\indent\indent\indent}c@{\indent}}
  \hline
  $\chi \backslash g$ & $1$ & $(12)$ & $(123)$ & $(1234)$ & $(12)(34)$ \\
  $\ \ \ \backslash|C_{G}(g)|$  & $1$ & $6$ & $8$ & $6$ & $3$ \\
  \hline
  $\chi_{\rho_{0}^+}$ & $1$ & $1$ & $1$ & $1$ & $1$ \\
  $\chi_{\rho_{0}^-}$ & $1$ & $-1$ & $1$ & $-1$ & $1$ \\
  $\chi_{\rho_1}$ & $2$ & $0$ & $-1$ & $0$ & $2$ \\
  $\chi_{\rho_{2}^+}$ & $3$ & $1$ & $0$ & $-1$ & $-1$ \\
  $\chi_{\rho_{2}^-}$ & $3$ & $-1$ & $0$ & $1$ & $-1$ \\
  \hline
\end{tabular}

\ \ \ \ \ \ \ \ \ \ \

\ \ \ \ \ \ $\textbf{Table 5}$ \ \ \ \ \ \ \ \ \ \ \ \ \ \ \ \ \ \ \ \ \  \  \ \ \ \ \ \ \ \ \ \
\ \ \ \ \ \  \  \ \ \ \ \ \ \ \ \ \ \ \ \ \ \ \  \  \ \ \ \ \ \ \ \ \ \
Character table of $A_4$.

\ \ \ \ \begin{tabular}{l@{\indent\indent\indent\indent\indent\indent}c@{\indent\indent\indent\indent\indent\indent}
c@{\indent\indent\indent\indent\indent\indent}c@{\indent\indent\indent\indent\indent\indent}c@{\indent}}
  \hline
  $\chi \backslash g$ & $1$ & $(123)$ & $(132)$ & $(12)(34)$  \\
   $\ \ \ \backslash|C_{G}(g)|$ & $1$ & $4$ & $4$ & $3$  \\
  \hline
  $\chi_{\phi_0}$ & $1$ & $1$ & $1$ & $1$  \\
  $\chi_{\phi_{1}}$ & $1$ & $\theta_3$ & $\theta_3^2$ & $1$  \\
  $\chi_{\phi_{2}}$ & $1$ & $\theta_3^2$ & $\theta_3$ & $1$  \\
  $\chi_{\phi_3}$ & $3$ & $0$ & $0$ & $-1$  \\
  \hline
\end{tabular}

~~~~~~~~~~~~~~~~~~~~~~~~~~~~~~~~~~~~~~~~~~~~~~~~~~~~~~~~~

On the other hand, the restriction of $\rho_i$ is similarly given as follows,
\begin{align*}
  &\check{\rho}_{0}^+=\check{\rho}_{0}^-=\phi_{0}, \ \ \ \ \ \ \ \ \ \  \check{\rho}_{1}=\phi_1\oplus\phi_2,  \ \ \ \ \ \ \ \ \ \
  \check{\rho}_{2}^+=\check{\rho}_{2}^-=\phi_{3}.
\end{align*}
Therefore the $A_4$-restrictions of $\rho_i$ form the set $\{\check{\rho}_{0}^+$, $\check{\rho}_1$, $\check{\rho}_{2}^+\}$ while the inductions of $A_4$-irreducible modules
form the set $\{\hat{\phi}_0$, $\hat{\phi}_1$, $\hat{\phi}_3\}$ with $\mathrm{\check{I}}=\{0, 1, 2\}$ and $\mathrm{\hat{I}}=\{0, 1, 3\}$.
Note that  $\phi_3$ is a faithful $A_4$-module and $\check{\rho}_2^{\pm}=\phi_3$, we have the following fusion rules:
\begin{align*}
   & \phi_3\otimes \check{\rho}_{0}^+=\check{\rho}_{2}^+, \qquad \ \ \ \ \ \ \ \ \ \ \ \ \ \ \ \ \ \ \ \
   \rho_{2}^+\otimes \hat{\phi}_0=\hat{\phi}_3, \\
   &\phi_3\otimes \check{\rho}_1=2\check{\rho}_{2}^+, \qquad \ \ \ \ \ \ \ \ \ \ \ \ \ \ \ \ \ \ \
   \rho_{2}^+\otimes \hat{\phi}_1=\hat{\phi}_3, \\
   &\phi_3\otimes \check{\rho}_{2}^+=\check{\rho}_{0}^+\oplus2\check{\rho}_{2}^+\oplus\check{\rho}_1, \qquad
   \rho_{2}^+\otimes \hat{\phi}_3=\hat{\phi}_0\oplus 2\hat{\phi}_3 \oplus 2\hat{\phi}_1.
\end{align*}
The corresponding graphs $\mathcal{R}_{\phi_3}(\check{S}_4)$ and
$\mathcal{R}_{\rho_{2}^+}(\hat{A}_4)$ are shown below, where the numbers inside
the nodes are the degrees of characters.

\begin{center}
\begin{tikzpicture}
\matrix (m) [matrix of math nodes, row sep=0.25cm, column sep=1cm]
    { \check{\rho}_{0}^+ & \check{\rho}_{2}^+ &   \\
        &  &  \check{\rho}_{1}  \\
       \check{\rho}_{0}^- & \check{\rho}_{2}^- &  \\ };
\path
(m-1-1) edge (m-1-2) (m-1-2) edge (m-2-3)
(m-3-1) edge (m-3-2) (m-3-2) edge (m-2-3);
\draw  (0,0.6) arc (118:415:2mm);
\draw  (0,-1.05) arc (118:415:2mm);
\draw [->,>=angle 90] (0.21,0.58)--(0.2,0.59);
\draw [->,>=angle 90] (0.21,-1.06)--(0.2,-1.05);
\draw[-][double] (m-1-2) to [bend right=40] (m-3-2);
\draw[-][double] (m-1-1) to [bend right=40] (m-3-1);
\end{tikzpicture}
\ \ \ \ \ \ \ \ \ \ \ \
\begin{tikzpicture}
\matrix (m) [matrix of math nodes, row sep=0.25cm, column sep=1cm]
    {  &  & \hat{\phi}_1  \\
      \hat{\phi}_0 & \hat{\phi}_{3} &    \\
       &  &  \hat{\phi}_2\\ };
\path
(m-2-1) edge (m-2-2)
(m-1-3) edge (m-2-2) (m-3-3) edge (m-2-2);
\draw[-][double] (m-1-3) to [bend left=40] (m-3-3);
\draw [double]  (-0.2,-0.2) arc (118:415:3mm);
\draw [->,>=angle 90] (0.101,-0.2)--(0.1,-0.199);
\draw[color=white]  (0,-1.05) arc (118:415:2mm);
\end{tikzpicture}
\end{center}

\ \ \ \ \ \ \ \ \ \ \ \ \ \ \ \ \ \

\begin{center}
\begin{footnotesize}
 \begin{tikzpicture}
\draw node at(0.2,0){$1$} (0.2,0)circle[radius=0.2] (0.4,0)--(1.6,0);
\draw node at(1.8,0){$2$} (1.8,0)circle[radius=0.2];
\draw [->,>=angle 90] (2.01,0)--(2,0);
\draw(2.02,0.02)--(3.2,0.02) (2.02,-0.02)--(3.2,-0.02);
\draw [->,>=angle 90] (1.941,-0.15)--(1.94,-0.149);
\draw [double]  (1.65,-0.15) arc (118:415:3mm);
\draw node{ } node at(3.4,0){$3$} (3.4,0)circle[radius=0.2];
\end{tikzpicture}
\end{footnotesize}
\ \ \ \ \ \ \ \ \ \ \ \ \ \ \ \
\begin{footnotesize}
\begin{tikzpicture}
\draw node at(0.2,0){$2$} (0.2,0)circle[radius=0.2] (0.4,0)--(1.6,0);
\draw node at(1.8,0){$2$} (1.8,0)circle[radius=0.2];
\draw [->,>=angle 90] (3.2,0)--(3.21,0);
\draw (3.18,0.02)--(2,0.02) (3.18,-0.02)--(2,-0.02);
\draw [->,>=angle 90] (1.941,-0.15)--(1.94,-0.149);
\draw [double]  (1.65,-0.15) arc (118:415:3mm);
\draw node{ } node at(3.4,0){$6$} (3.4,0)circle[radius=0.2];
\end{tikzpicture}
\end{footnotesize}
\end{center}
So the adjacency matrices of the representation graphs $\mathcal{R}_{\phi_3}(\check{S}_4)$
and $\mathcal{R}_{\rho_2^+}(\hat{A}_4)$ are
\begin{align*}
 {\rm \widetilde{A}}=\left(
   \begin{array}{ccc}
     0 & 0 & 1 \\
     0 & 0 & 1 \\
    1 & 2 & 2 \\   \end{array}
  \right)
\ \ \ \ \ \ \  \hbox{and} \ \ \ \ \ \ \
  {\rm \widetilde{B}}=\left(
       \begin{array}{ccc}
         0 & 0 & 1 \\
          0 & 0 & 2 \\
          1 & 1 & 2 \\
        \end{array}
      \right).
    \\
\end{align*}
It follows from $\mathrm{Theorem}$ \ref{thm1} and \ref{thm11} that the
pair $(S_4, A_4)$ gives rise to
${\mathrm{det}}(\mathrm{I}-t\mathrm{\widetilde{A}}^{T})={\mathrm{det}}(\mathrm{I}-t\mathrm{\widetilde{B}}^{T})
=\Pi_{g\in \Upsilon(N)}(1- \chi_{V}(g)t)=(1-3t)(1+t)=1-2t-3t^2$ for $V=\phi_3=\check{\rho}_2^{+}$.
Subsequently the Poincar\'e series of the restrictions of $A_4$-modules
and the induced $S_4$-modules associated with {\rm $T(V)=
\oplus_{k \geq 0}V^{\otimes k}$} are computed by
\begin{eqnarray*}
  & &\check{m}^{0}(t) = \hat{m}^{0}(t) =\frac{1-2t-2t^2}{1-2t-3t^2}=
  1+t^2+2t^3+7t^4+20t^5+61t^6+182t^7+\cdots \nonumber \\
  & &\check{m}^{2}(t) = \hat{m}^{3}(t) =\frac{t}{1-2t-3t^2}=
  t+2t^2+7t^3+20t^4+61t^5+182t^6+547t^7+\cdots \nonumber \\
  & &\check{m}^{1}(t) = 2\hat{m}^{1}(t) =\frac{2t^2}{1-2t-3t^2}=
  2t^2+4t^3+14t^4+40t^5+122t^6+364t^7+\cdots.
\end{eqnarray*}

\section{Poincar\'{e} series for $\mathrm{SU}_2$}

If $N \lhd G$ is any pair of finite subgroups of $\mathrm{SU}_2$ and
$V=\mathbb{C}^2$ in $\mathrm{Section}$ 2, then the Poincar\'{e} series $\check{m}^{0}(t)$ and $\hat{m}^{0}(t)$ are $N$-invariants and $G$-invariants inside the
tensor algebra $T(V)$.

\begin{theo}\label{thm3.1}
Let $N \lhd G$ be a pair of finite subgroups of {\rm$\mathrm{SU}_2$} and $V=\mathbb{C}^2$.
Then the Poincar\'{e} series $\check{m}^{0}(t)$ and $\hat{m}^{0}(t)$
are $N$-invariants $T(V)^{N}$ and $G$-invariants $T(V)^{G}$ in $T(V)=\oplus_{k \geq 0}V^{\otimes k}$ given by
{\rm \begin{align}\label{3.2}
   \check{m}^{0}(t)=\hat{m}^{0}(t)
   =\frac{\mathrm{det}(\mathrm{I}-t\mathrm{A}^{T})}{{\mathrm{det}}(\mathrm{I}-t\mathrm{\widetilde{A}}^{T})}
   =\frac{\mathrm{det}(\mathrm{I}-t\mathrm{B}^{T})}
   {{\mathrm{det}}(\mathrm{I}-t\mathrm{\widetilde{B}}^{T})}
   =\frac{\mathrm{det}(\mathrm{I}-t\mathrm{A}^{T})}
   {\prod\limits_{g\in \Upsilon(N)}(1-\chi_{V}(g)t)}
   =\frac{\mathrm{det}(\mathrm{I}-t\mathrm{B}^{T})}
   {\prod\limits_{g\in \Upsilon(N)}(1-\chi_{V}(g)t)},
\end{align}}
where ${\rm \widetilde{A}}$ (resp. {\rm ${\rm \widetilde{B}}$})
are the adjacency matrices of twisted (resp. untwisted nonsimply laced) affine Dynkin diagrams {\rm $\mathcal{R}_V
(\check{G})$} (reps. {\rm $\mathcal{R}_{V}(\hat{N})$}), and ${\rm A}$ and ${\rm B}$
are the adjacency matrices
of the finite Dynkin diagrams obtained by removing the special node corresponding to the trivial module.
$\chi_{V}$ is the character of $V$ and $\Upsilon(N)=\Upsilon\cap N$, where $\Upsilon$ is a fixed set of conjugacy
class representative of $G$.
\end{theo}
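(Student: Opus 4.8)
The plan is to specialize Theorems~\ref{thm1} and~\ref{thm11} to the trivial module $i=0$ and to simplify the resulting determinants by a single cofactor expansion; the coincidence $\check m^0(t)=\hat m^0(t)$ will then follow from Frobenius reciprocity. Since $V=\mathbb C^2$ is the restriction to $N$ (and to $G$) of the standard self-dual module of $\mathrm{SU}_2$, it is faithful and satisfies $V\simeq V^*$, so the hypotheses of both theorems hold.

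First I would put $i=0$ in Theorem~\ref{thm1} to get $\check m^0(t)=\det(\mathrm M_1^0)/\det(\mathrm I-t\widetilde{\mathrm A}^{T})$, where $\mathrm M_1^0$ is $\mathrm I-t\widetilde{\mathrm A}^{T}$ with its first column replaced by $\underline{\delta}=(1,0,\dots,0)^{T}$. Expanding $\det(\mathrm M_1^0)$ along this first column, only the top entry $1$ contributes, so $\det(\mathrm M_1^0)$ equals the $(1,1)$-cofactor, namely the determinant of $\mathrm I-t\widetilde{\mathrm A}^{T}$ with its first row and column deleted (the sign being $(-1)^{1+1}=1$). The first row and column of $\widetilde{\mathrm A}$ index the special node of the affine diagram $\mathcal R_V(\check G)$, that is, the trivial module $\check\rho_0$; by definition, deleting that node leaves the finite Dynkin diagram with adjacency matrix $\mathrm A$. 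Since deleting the first row and column of $\widetilde{\mathrm A}^{T}$ gives $(\widetilde{\mathrm A}\text{ with first row and column deleted})^{T}=\mathrm A^{T}$, I obtain $\det(\mathrm M_1^0)=\det(\mathrm I-t\mathrm A^{T})$, and hence $\check m^0(t)=\det(\mathrm I-t\mathrm A^{T})/\det(\mathrm I-t\widetilde{\mathrm A}^{T})$. The identical computation applied to Theorem~\ref{thm11} yields $\hat m^0(t)=\det(\mathrm I-t\mathrm B^{T})/\det(\mathrm I-t\widetilde{\mathrm B}^{T})$. The two denominators both equal $\prod_{g\in\Upsilon(N)}(1-\chi_V(g)t)$ by the second equalities in Theorems~\ref{thm1} and~\ref{thm11} (equivalently, by Lemma~\ref{eigenvalues} the eigenvalues of $\widetilde{\mathrm A}$ and $\widetilde{\mathrm B}$ are the values $\chi_V(g)$), which produces the last two fractions in the displayed chain.

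Finally I would establish the first equality $\check m^0(t)=\hat m^0(t)$ together with its invariant-theoretic reading. Because $\check\rho_0$ is the restriction of the trivial $G$-module, it is the trivial $N$-module, so $\check m^0_k=\dim\mathrm{Hom}_N(\mathbf 1_N,V^{\otimes k})=\dim(V^{\otimes k})^{N}$ is the graded dimension of $T(V)^{N}$. On the other hand $\hat\phi_0=\mathrm{Ind}_N^G\mathbf 1_N$, so Frobenius reciprocity gives $\hat m^0_k=\dim\mathrm{Hom}_G(\mathrm{Ind}_N^G\mathbf 1_N,V^{\otimes k})=\dim\mathrm{Hom}_N(\mathbf 1_N,V^{\otimes k})=\dim(V^{\otimes k})^{N}$, so the two series agree coefficientwise. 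I expect the only genuine obstacle to be the bookkeeping in the middle step---checking that the trivial module is the special node indexed by $0$, that excising it returns exactly the finite Dynkin adjacency matrices $\mathrm A$ and $\mathrm B$, and that the transpose together with the cofactor sign work out---rather than anything substantive, since all of the analytic content is already carried by Theorems~\ref{thm1} and~\ref{thm11} and Lemma~\ref{eigenvalues}.
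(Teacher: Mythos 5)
Your proposal is correct and is essentially the paper's intended argument: Theorem \ref{thm3.1} is stated as a direct specialization of Theorems \ref{thm1} and \ref{thm11} at $i=0$, with the cofactor expansion of $\det(\mathrm M_1^0)$ (resp.\ $\det(\mathrm M_2^0)$) along the replaced column yielding $\det(\mathrm I-t\mathrm A^{T})$ (resp.\ $\det(\mathrm I-t\mathrm B^{T})$), and the common denominator supplied by Lemma \ref{eigenvalues}. Your Frobenius-reciprocity identification of both series with the graded dimension of $T(V)^{N}$ is also the right reading of the equality $\check m^0(t)=\hat m^0(t)$.
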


Besides the special vertex, the Poincar\'e series for other vertices of restriction modules and induced modules
of the Dynkin diagrams also have close relationship.


\begin{coro}\label{relation}
Let $(G, N)=(D_{2(n-1)},D_{n-1})$, $(D_n,C_{2n})$, $(O,T)$ and $(T,D_2)$ in $\mathrm{SU}_2$ and $V=\mathbb{C}^2$.
Then the Poincar\'e series $\check{m}^{i}(t)$ and $\hat{m}^{i'}(t)$ for the $N$-restrictions of irreducible modules
and induced modules of $N$-irreducibles in $T(V)=\oplus_{k \geq 0}V^{\otimes k}$ satisfy the following relation:
\begin{align*}
\check{m}^{i}(t)=
\left\{\begin{array}{ll}
  \hat{m}^{i'}(t), & \mbox{$i'$ is a long root in $\mathcal{R}_{V}(\hat{N})$}\\
  |G:N|\hat{m}^{i'}(t),
   & \mbox{$i'$ is a short  root in $\mathcal{R}_{V}(\hat{N})$},
\end{array}\right.
\end{align*}
where $\hat\phi_{i'}=f(\check\rho_i)$, $f$ is the bijective in $\mathrm{Lemma}$ \ref{reinnumber}.
In particular, for $(G,N)=(D_2,C_2)$, $(D_{2n}, C_{2n})$ $(n\geq 2)$ 
and $V=\mathbb{C}^2$, one has that
\begin{align*}
\hat{m}^{i}(t)=
\left\{\begin{array}{ll}
  \check{m}^{0}(t), & \hbox{\emph{i} is the special vertex of $\mathcal{R}_{V}(\check{G})$}   \\
  2\check{m}^{i'}(t), & \hbox{\emph{i}\ (resp. $i'$) in the finite Dynkin diagram of $\mathcal{R}_{V}(\check{G})$ (resp. $\mathcal{R}_{V}(\hat{N})$)}.
\end{array}\right.
\end{align*}
\end{coro}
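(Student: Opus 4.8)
The plan is to reduce the whole statement to a comparison of the numerators of the two Poincar\'e series. By Theorems \ref{thm1} and \ref{thm11} the series $\check m^i(t)$ and $\hat m^{i'}(t)$ share the common denominator $\prod_{g\in\Upsilon(N)}(1-\chi_V(g)t)$, so only the numerators matter. Writing $\underline\delta=(1,0,\dots,0)^T$ for the indicator of the special (trivial) vertex $0$, Cramer's rule gives $\check m^i(t)=\big[(\mathrm I-t\widetilde A^{T})^{-1}\big]_{i0}$ and $\hat m^{i'}(t)=\big[(\mathrm I-t\widetilde B^{T})^{-1}\big]_{i'0}$. Thus the corollary becomes a comparison of two distinguished entries of inverse matrices, and the bijection $f$ of Lemma \ref{reinnumber} is used throughout to identify the index sets $\check{\mathrm I}$ and $\hat{\mathrm I}$.

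For the four pairs $(D_{2(n-1)},D_{n-1})$, $(D_n,C_{2n})$, $(O,T)$, $(T,D_2)$ the diagrams $\mathcal R_V(\check G)$ and $\mathcal R_V(\hat N)$ form a dual pair, so $\mathrm C_{\widetilde B}=\mathrm C_{\widetilde A}^{T}$ and hence $\widetilde B=\widetilde A^{T}$ once vertices are matched by $f$; this is read off directly from the fusion rules of Section 2 (e.g.\ at the folded end $a_{n-2,n-1}=2$, $a_{n-1,n-2}=1$ matches $b_{n-2,(n-1)^+}=1$, $b_{(n-1)^+,n-2}=2$). Consequently $\hat m^{i'}(t)=\big[(\mathrm I-t\widetilde A)^{-1}\big]_{i0}=\big[(\mathrm I-t\widetilde A^{T})^{-1}\big]_{0i}$. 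Now I invoke the symmetrizability of the affine Cartan matrix $\mathrm C_{\widetilde A}=2\mathrm I-\widetilde A$: there is a diagonal $D=\mathrm{diag}(d_i)$ with $\widetilde A^{T}=D\widetilde A D^{-1}$, so that $P:=(\mathrm I-t\widetilde A^{T})^{-1}$ satisfies $P_{ij}=(d_i/d_j)\,P_{ji}$. Taking $j=0$ yields $\check m^i(t)=(d_i/d_0)\,\hat m^{i'}(t)$.

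It then remains to evaluate $d_i/d_0$. From the eigenvector data in the Remark at the end of Section 2, the symmetrizer sends the null vector of $\mathrm C_{\widetilde A}$ to that of $\mathrm C_{\widetilde A}^{T}$, i.e.\ $D\alpha_{\widetilde A}$ is proportional to $\alpha_{\widetilde B}$; since $\alpha_{\widetilde A}=|G:N|^{-1}(\dim\hat\phi_{i'})$ and $\alpha_{\widetilde B}=(\dim\check\rho_i)$, this gives $d_i/d_0=|G:N|\,\dim\check\rho_i/\dim\hat\phi_{i'}$ after using $\dim\check\rho_0=1$ and $\dim\hat\phi_0=|G:N|$. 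Comparing this ratio with the character degrees labelled on the diagrams of Section 2, one checks it equals $1$ exactly when $i'$ is a long root of $\mathcal R_V(\hat N)$ and equals $|G:N|$ exactly when $i'$ is a short root. This establishes the first displayed relation.

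The pairs $(D_2,C_2)$ and $(D_{2n},C_{2n})$ of subsections 2.4 and 2.7 are the genuine obstacle, and must be handled separately precisely because there the duality fails: the fusion rules show transposition reverses the arrow at only one end, so $\widetilde B\neq\widetilde A^{T}$ and no diagonal conjugation links the two linear systems. For these families I would finish by direct evaluation. The denominators still coincide by Theorem \ref{thm3.1}, and $\mathrm I-t\widetilde A^{T}$, $\mathrm I-t\widetilde B^{T}$ are tridiagonal apart from modified end entries, so $\det(\mathrm M_1^i)$ and $\det(\mathrm M_2^{i'})$ follow from a short three-term recursion (the check case $(D_4,C_4)$ already gives $\check m^0=\hat m^0=\tfrac{1-2t^2}{1-4t^2}$, $\check m^1=2\hat m^1$, $\check m^2=2\hat m^2$). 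The outcome is $\check m^0(t)=\hat m^0(t)$ at the special vertex and $\check m^i(t)=2\,\hat m^{i'}(t)$ at every remaining vertex; note that here the constant is $2$ rather than $|G:N|$ and no longer distinguishes long from short roots, which is exactly why these two pairs are separated from the four above.
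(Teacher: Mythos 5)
Your overall strategy is sound and, for the four dual pairs, essentially complete: the reduction via Cramer's rule to $\check m^{i}(t)=\bigl[(\mathrm I-t\widetilde{\mathrm A}^{T})^{-1}\bigr]_{i0}$ and $\hat m^{i'}(t)=\bigl[(\mathrm I-t\widetilde{\mathrm B}^{T})^{-1}\bigr]_{i'0}$, the identification $\widetilde{\mathrm B}=\widetilde{\mathrm A}^{T}$ under $f$ (which does hold for $(D_{2(n-1)},D_{n-1})$, $(D_n,C_{2n})$, $(O,T)$, $(T,D_2)$ and is checkable from the fusion rules of Section 2), the symmetrizer identity $P_{i0}=(d_i/d_0)P_{0i}$, and the evaluation of $d_i/d_0$ from the null vectors in the Remark at the end of Section 2 are all correct. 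The paper offers no proof of this corollary at all — it is left as a consequence of Theorems \ref{thm1} and \ref{thm11} together with the case-by-case data of Section 2 — so your symmetrizer argument is a genuinely more uniform route than what the authors appear to intend, at the price of a case check that $\widetilde{\mathrm B}=\widetilde{\mathrm A}^{T}$ and that $d_i/d_0\in\{1,|G\colon N|\}$ matches the long/short dichotomy.

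Two points need attention. First, and most importantly: for $(D_2,C_2)$ and $(D_{2n},C_{2n})$ your conclusion is $\check m^{i}(t)=2\hat m^{i'}(t)$ at the non-special vertices, whereas the corollary as printed asserts $\hat m^{i}(t)=2\check m^{i'}(t)$, with the factor of $2$ on the opposite side. Your version is the numerically correct one: for $(D_2,C_2)$ one has $\check m^{1}_1=\dim\mathrm{Hom}_{C_2}(2\xi_1,2\xi_1)=4$ while $\hat m^{1}_1=\dim\mathrm{Hom}_{D_2}(2\delta_1,\delta_1)=2$, so $\check m^{1}=2\hat m^{1}$; the series $4t+16t^3+\cdots$ displayed at the end of Section 4 is $\check m^{1}(t)=\tfrac1t(\check m^{0}(t)-1)$, and the equation $2\check m^{1}=\hat m^{1}$ there (hence the printed corollary) has the factor reversed. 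You should say explicitly that you are proving the corrected relation rather than the literal statement. Second, your treatment of the $(D_{2n},C_{2n})$ family for general $n$ is only sketched. Both issues can be closed at once by a more elementary argument that you do not use: Frobenius reciprocity gives $\hat m^{i'}_k=\dim\mathrm{Hom}_G(\hat\phi_{i'},V^{\otimes k})=\dim\mathrm{Hom}_N(\phi_{i'},V^{\otimes k})$, while Clifford's theorem (Lemma \ref{Clifford}) writes $\check\rho_i$ as a sum of $G$-conjugates of $\phi_{i'}$, all occurring with equal multiplicity in the $G$-module $V^{\otimes k}$; hence $\check m^{i}(t)=\bigl(\dim\check\rho_i/\dim\phi_{i'}\bigr)\,\hat m^{i'}(t)$ for every pair, and the constant $1$, $|G\colon N|$ or $2$ is read off from the degree labels of Section 2. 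This fixes the direction of the factor uniformly and dispenses with the determinant recursions, though your route has the merit of explaining the constant through root lengths.
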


\begin{remark}

For the pairs of the finite subgroups {\rm $N \lhd G\leq\mathrm{SU}_2$}
in {\rm Section} $2$ and the natural $G$-module $V=\mathbb C^2$, the Poincar\'e series of the multiplicities of the trivial $N$-module inside
$T(V)=\oplus_{k \geq 0}V^{\otimes k}$ 
coincides with the usual Poincar\'e series
of $N$-invariants and $G$-invariants in 
$T(V)=\oplus_{k \geq 0}V^{\otimes k}$.
In this sense $\mathrm{Theorem}$ \ref{thm3.1} is analogous to the
generalized Poincar\'e series $[\widetilde{P}_{(G,N)}(t)]_0$ associated with the symmetric algebras $S(V)$ defined by Stekolshchik \cite{St},
who proved a generalized Ebeling's theorem (cf. \cite{Ebe}) that the generalized Poincar\'{e} series can be written as a quotient of the characteristic polynomials
of finite and affine Coxeter transformations,
\begin{equation*}\label{st}
  [\widetilde{P}_{(G,N)}(t)]_0=\frac{{\rm detM_0}(t)}{{\rm detM}(t)},
\end{equation*}
where ${\rm detM_0}(t)={\rm det(}t^2{\rm I}-\mathbf{C})$, ${\rm detM}(t)={\rm det(}t^2{\rm I}-\mathbf{C}_a)$,
$\mathbf{C}$ and $\mathbf{C}_a$ are the Coxeter transformation and its affine analog respectively. 
\end{remark}

Recall that the Coxeter transformation is the product of all simple reflections of the root system (similar for affine case),
while the spectrum of the Coxeter transformation is closely related with that of the Cartan matrix
\cite{BLM, Col}.
Based on this, Benkart \cite{Ben} showed that the Poincar\'e series for invariants of group $G\leq \mathrm{SU}_2$
in $T(V)$ can be used to get
the exponents and Coxeter number of the simply laced affine Lie algebras (except ${\rm A}_{2n}^{(1)}$).
We now show that all the remaining cases are recovered by the relative Poincar\'e series for
the restriction and induction modules associated with the pairs of subgroups in view of the
McKay-Slodowy correspondence.

The exponents and Coxeter numbers of the twisted and untwisted non-simply laced types are displayed in $\mathrm{Table}$ $6$.

\ \ \ \ \ \ \ \ \ \ \ \ \ \ $\textbf{Table 6}$\ \ \ \ \ \ \ \ \ \ \ \ \ \ \ \ \ \ \ \ \ \ \ \ \ \ \ \ \ \ \ \ \ \
\ \ \ \ \ \ \ \ \ \ \ \ \ \ \ \ \
Exponents and Coxeter number.

\ \ \ \ \ \ \ \ \ \ \ \  \begin{tabular}{l@{\indent\indent\indent\indent}c@{\indent\indent\indent\indent}c@{}}
  \hline
  Dynkin diagrams & Exponents & Coxeter number   \\
  \hline
  ${\rm A}_1$  & $1$ & $2$  \\
  ${\rm B}_n$  & $1,3,5,\ldots,2n-1$ & $2n$  \\
  ${\rm C}_n$ & $1,3,5,\ldots,2n-1$ & $2n$   \\
  ${\rm F}_4$ & $1,5,7,11$ & $12$   \\
  ${\rm G}_2$ & $1,5$ & $6$  \\
  ${\rm A}_{1}^{(1)}$ & $0,1$  & $1$  \\
  ${\rm A}_{2}^{(2)}$ & $0,2$  & $2$  \\
  ${\rm A}_{2\ell}^{(2)}$ & $0,1,\ldots,\ell$  & $\ell$ \\
  ${\rm B}_{2\ell+1}^{(1)}$, ${\rm A}_{4\ell+1}^{(2)}$ & $0,1,\ldots,\ell-1,\ell,\ell,\ell+1,\ldots,2\ell$ & $2\ell$  \\
  ${\rm B}_{2\ell}^{(1)}$, ${\rm A}_{4\ell-1}^{(2)}$ & $0,2,\ldots,2\ell-2,2\ell-1,2\ell,\ldots,2(2\ell-1)$ & $2(2\ell-1)$  \\
  ${\rm C}_\ell^{(1)}$, ${\rm D}_{\ell+1}^{(2)}$ & $0,1,\ldots,\ell$ & $\ell$  \\
  ${\rm F}_4^{(1)}$, ${\rm E}_6^{(2)}$ & $0,2,3,4,6$ & $6$  \\
  ${\rm G}_2^{(1)}$, ${\rm D}_4^{(3)}$ & $0,1,2$ & $2$  \\
  \hline
\end{tabular}

\ \ \ \ \ \ \ \ \ \ \

\begin{lemm}{\rm\cite[Thm. 2]{BLM}}\label{spetrum}
Let $A=(A_{ij})_{l\times l}$ be a generalized Cartan matrix such that $(1)$ $A_{ii}=2,$
$(2)$ $A_{ij}=0 $ if and only if $A_{ji}=0$, 
and $(3)$ the primitive graph of A has no odd cycles.
Let $R$ be the Coxeter transformation of $A$. Then there are $l$ complex numbers $\eta_1,\ldots,\eta_l$
satisfying $\eta_j+ \eta_{l-j+1}=2 \pi \sqrt{-1}$ for $l\leqslant j\leqslant l$,
such that the spectrum of $R$ is $e^{\eta_1},\ldots, e^{\eta_l}$
and the spectrum of $A$ is $4\cosh^2(\eta_1/4),\ldots, 4\cosh^2(\eta_l/4)$.
\end{lemm}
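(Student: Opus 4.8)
The plan is to exploit the hypothesis that the primitive graph of $A$ is bipartite. Conditions $(2)$ and $(3)$ guarantee that the underlying graph admits a proper $2$-colouring, so the index set splits as $\{1,\dots,l\}=I_+\sqcup I_-$ with $A_{ij}=0$ whenever $i,j$ lie in the same colour class. Writing each simple reflection $s_i$ (acting on the root lattice by $s_i(\alpha_j)=\alpha_j-A_{ij}\alpha_i$, so that $s_i=\mathrm{I}-e_i a_i^{T}$ with $a_i^{T}$ the $i$th row of $A$) in the basis of simple roots, any two reflections from the same colour class commute; hence $R_+=\prod_{i\in I_+}s_i$ and $R_-=\prod_{i\in I_-}s_i$ are unambiguous, and since all Coxeter elements are conjugate we may take $R=R_+R_-$ without changing the spectrum. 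First I would record, in block form with respect to $I_+\sqcup I_-$,
\[
A=\begin{pmatrix}2I_+&-B\\-C&2I_-\end{pmatrix},\quad
R_+=\begin{pmatrix}-I_+&B\\0&I_-\end{pmatrix},\quad
R_-=\begin{pmatrix}I_+&0\\C&-I_-\end{pmatrix},\quad
R=R_+R_-=\begin{pmatrix}BC-I_+&-B\\C&-I_-\end{pmatrix},
\]
the formulas for $R_\pm$ following at once from $\sum_{i\in I_+}e_ia_i^{T}=\left(\begin{smallmatrix}2I_+&-B\\0&0\end{smallmatrix}\right)$ and its analogue.

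Since $R_+^2=R_-^2=\mathrm{I}$, one has $R_+RR_+=R_-R_+=R^{-1}$, so $R$ is conjugate to $R^{-1}$ and its spectrum is stable under $\lambda\mapsto\lambda^{-1}$; this is exactly what forces the eigenvalues of $R$ to pair off and produces the stated symmetry $\eta_j+\eta_{l-j+1}\in 2\pi\sqrt{-1}\,\mathbb Z$. The core of the argument is to reduce both characteristic polynomials to that of the single $n_+\times n_+$ matrix $BC$ (where $n_\pm=|I_\pm|$). Applying Schur complements to the block forms above, I would establish
\[
\det(\lambda \mathrm{I}-R)=\lambda^{n_+}(\lambda+1)^{\,n_--n_+}\det\Bigl(\tfrac{(\lambda+1)^2}{\lambda}I_+-BC\Bigr),\qquad
\det(\mu \mathrm{I}-A)=(\mu-2)^{\,n_--n_+}\det\bigl((\mu-2)^2I_+-BC\bigr).
\]
Thus every eigenvalue $\lambda\neq-1$ of $R$ satisfies $(\lambda+1)^2/\lambda=\beta$, and every eigenvalue $\mu\neq2$ of $A$ satisfies $(\mu-2)^2=\beta$, for a common eigenvalue $\beta$ of $BC$; this is the mechanism linking the two spectra.

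Finally I would set $\lambda=e^{\eta}$, so that $(\lambda+1)^2/\lambda=e^{\eta}+2+e^{-\eta}=4\cosh^2(\eta/2)$, and match this against $(\mu-2)^2=4\cosh^2(\eta/2)$. The branch $\mu=2+2\cosh(\eta/2)=4\cosh^2(\eta/4)$ is the stated formula, while the reciprocal partner $e^{-\eta}$, indexed as $\eta_{l-j+1}=2\pi\sqrt{-1}-\eta_j$ (so that $e^{\eta_{l-j+1}}=e^{-\eta_j}$), produces $4\cosh^2(\eta_{l-j+1}/4)=2-2\cosh(\eta_j/2)$, the eigenvalue of $A$ reflected about $2$. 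This is precisely why the normalization $\eta_j+\eta_{l-j+1}=2\pi\sqrt{-1}$, rather than $0$, is the correct one: it makes the single expression $4\cosh^2(\eta_j/4)$ recover all $l$ eigenvalues of $A$ while $e^{\eta_j}$ recovers all of $R$. The main obstacle I anticipate is the bookkeeping at the degenerate values $\lambda=-1$ and $\mu=2$ (the kernel of $BC$, which also absorbs the dimension discrepancy $|n_+-n_-|$ when the colour classes have unequal size): one must verify that the multiplicities on the two sides agree under this one labelling and that the two square-root branches are selected coherently across reciprocal pairs, whereas the determinant computations themselves are routine.
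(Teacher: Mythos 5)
The paper offers no proof of this lemma at all: it is quoted verbatim from \cite[Thm.~2]{BLM}, so there is nothing internal to compare your argument against. What you have written is essentially a reconstruction of the Berman--Lee--Moody argument itself, via the bipartite splitting $R=R_+R_-$ with $R_\pm^2=\mathrm{I}$. I checked the computational core and it is sound: the block forms of $R_\pm$ and $R$, the conjugation $R_+RR_+=R^{-1}$, the two Schur-complement identities reducing both characteristic polynomials to $\det(\cdot\,\mathrm{I}_+-BC)$, the invertibility of $R$ (so that every eigenvalue really is some $e^{\eta}$), and the half-angle bookkeeping $4\cosh^2(\eta/4)=2+2\cosh(\eta/2)$ versus $4\cosh^2\bigl((2\pi\sqrt{-1}-\eta)/4\bigr)=2-2\cosh(\eta/2)$, which correctly explains why the normalization $\eta_j+\eta_{l-j+1}=2\pi\sqrt{-1}$ (rather than $0$) captures both square-root branches. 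The degenerate cases you flag ($\beta=0$, i.e.\ $\lambda=-1$ and $\mu=2$, absorbing the imbalance $|n_+-n_-|$) do work out, since the total degrees match and $\eta=\pi\sqrt{-1}$ is self-paired.

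The one genuine flaw is the throwaway justification ``since all Coxeter elements are conjugate we may take $R=R_+R_-$.'' That is true for trees but false for bipartite graphs with cycles, which condition $(3)$ permits. For the $4$-cycle (the graph of ${\rm A}_3^{(1)}$) the bipartite Coxeter element has spectrum $\{1,1,-1,-1\}$, while the ordering $s_1s_2s_3s_4$ gives $\{1,1,e^{\pm 2\pi\sqrt{-1}/3}\}$; these are not conjugate, and the second spectrum does \emph{not} satisfy the conclusion ($4\cos^2(\pi/6)=3$ is not an eigenvalue of that Cartan matrix, whose spectrum is $\{0,2,2,4\}$). So your argument proves the lemma only for the bipartite Coxeter transformation $R_+R_-$; to get the statement at the claimed level of generality you must either take that as the definition of ``the Coxeter transformation'' (as is standard in this setting, and as \cite{BLM} effectively does) or restrict to trees. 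This restriction is harmless for the paper, since Theorem~\ref{thm3.6} only applies the lemma to affine diagrams other than ${\rm A}_n^{(1)}$, all of which are trees, but you should state it rather than appeal to a false conjugacy claim.
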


We have the following result to realize the exponents of affine Kac-Moody Lie algebras in relation with the McKay-Slodowy correspondence.
\begin{theo}\label{thm3.6}
Let {\rm $N \lhd G\leq \mathrm{SU}_2$} and {\rm $V=\mathbb{C}^2$}.
Let ${\rm \widetilde{A}}$ (resp. ${\rm {A}}$) be the adjacency matrix of the nonsimply laced affine (resp. finite) Dynkin diagrams
$\mathcal{R}_{V}(\check{G})$ (resp. of $\mathcal{R}_{V}(\check{G})$). 
Let ${\rm \widetilde{\Delta}}$ (resp. ${\rm \Delta}$) be the set of exponents $\widetilde{m}_i$ (resp. $m_i$) of the affine Lie algebra (resp. finite
simple Lie algebra) associated with the Dynkin diagram
and $\widetilde{h}$ (resp. $h$) be the affine (resp. finite) Coxeter number.
Then the Poincar\'{e} series for $N$-invariants and $G$-invariants in
{\rm $T(V)=\oplus_{k \geq 0}V^{\otimes k}$} 
are
{\rm \begin{align}
  \check{m}^{0}(t)=\hat{m}^{0}(t)
  &=\frac{{\mathrm{det}}({\rm I}-t{\rm A}^{T})}{{\mathrm{det}}({\rm I}-t{\rm  \widetilde{A}}^{T})}
   =\frac{{\rm \rm{det}}({\rm I}-t{\rm A}^{T})}
   {\prod\limits_{g\in \Upsilon(N)}(1-\chi_{V}(g)t)}
  =\frac{\prod\limits_{m_i\in \Delta}(1-2\cos(\frac{m_i\pi }{h})t)}
   {\prod\limits_{\widetilde{m}_i\in \widetilde{\Delta}}(1-2\cos(\frac{\widetilde{m}_i \pi}{\widetilde{h}})t)}, \label{3.4}
\end{align}}
where $\chi_{V}$ is the character of $V$,
$\Upsilon(N)=N\cap \Upsilon$, and $\Upsilon$ is a fixed set of conjugacy representatives of $G$.
\end{theo}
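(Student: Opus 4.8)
The plan is to build on Theorem \ref{thm3.1}, which already supplies all of \eqref{3.4} except its last equality; thus the only new content is
\[
\frac{\det(\mathrm{I}-t\mathrm{A}^{T})}{\det(\mathrm{I}-t\mathrm{\widetilde{A}}^{T})}
=\frac{\prod_{m_i\in\Delta}(1-2\cos(m_i\pi/h)t)}{\prod_{\widetilde{m}_i\in\widetilde{\Delta}}(1-2\cos(\widetilde{m}_i\pi/\widetilde{h})t)}.
\]
I would prove this by matching numerator and denominator separately, that is, by establishing the two spectral factorizations $\det(\mathrm{I}-t\mathrm{\widetilde{A}}^{T})=\prod_{\widetilde{m}_i\in\widetilde{\Delta}}(1-2\cos(\widetilde{m}_i\pi/\widetilde{h})t)$ for the affine diagram and $\det(\mathrm{I}-t\mathrm{A}^{T})=\prod_{m_i\in\Delta}(1-2\cos(m_i\pi/h)t)$ for the finite diagram obtained by deleting the special node. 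Since $\det(\mathrm{M})=\det(\mathrm{M}^{T})$, each reduces to computing the eigenvalue multiset of the relevant adjacency matrix.

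The bridge is the relation $\mathrm{C_{\widetilde{A}}}=2\mathrm{I}-\mathrm{\widetilde{A}}$ recorded earlier: the eigenvalues of $\mathrm{\widetilde{A}}$ are exactly $2-\mu$ as $\mu$ runs over the spectrum of the generalized Cartan matrix $\mathrm{C_{\widetilde{A}}}$. To locate that spectrum I would apply Lemma \ref{spetrum}, after checking its hypotheses for each diagram arising in Section 2: condition (1) is automatic, condition (2) holds because the zero pattern of a generalized Cartan matrix is symmetric, and condition (3) holds because the relevant affine diagrams $\mathrm{B}_n^{(1)}, \mathrm{C}_n^{(1)}, \mathrm{F}_4^{(1)}, \mathrm{G}_2^{(1)}$ and their twisted analogues $\mathrm{A}_{2n-1}^{(2)}, \mathrm{D}_{n+1}^{(2)}, \mathrm{A}_{2n}^{(2)}, \mathrm{E}_6^{(2)}, \mathrm{D}_4^{(3)}$ are all trees, so their primitive graphs contain no cycles at all; the finite diagrams obtained by deleting the special node are likewise trees. (This is precisely where $\mathrm{A}_{2n}^{(1)}$, whose diagram is an odd cycle, would fail, explaining its exclusion.) Lemma \ref{spetrum} then produces numbers $\eta_j$ with $\eta_j+\eta_{l-j+1}=2\pi\sqrt{-1}$ for which the Coxeter transformation has spectrum $\{e^{\eta_j}\}$ while $\mathrm{C_{\widetilde{A}}}$ has spectrum $\{4\cosh^2(\eta_j/4)\}$.

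It remains to identify the Coxeter spectrum with the exponents. By the classical theory of Coxeter transformations (see \cite{BLM,Col}) the eigenvalues $e^{\eta_j}$ are the powers $e^{2\pi\sqrt{-1}\,\widetilde{m}_j/\widetilde{h}}$, with $\widetilde{m}_j$ and $\widetilde{h}$ exactly the affine exponents and Coxeter number of Table 6; the singular value $\mu=0$ of $\mathrm{C_{\widetilde{A}}}$ corresponds to the exponent $\widetilde{m}=0$ and to the Perron eigenvalue $2$ of $\mathrm{\widetilde{A}}$. Choosing the lift $\eta_j/4=\sqrt{-1}(\pi/2-\pi\widetilde{m}_j/(2\widetilde{h}))$ turns $4\cosh^2(\eta_j/4)$ into $4\sin^2(\pi\widetilde{m}_j/(2\widetilde{h}))$, whence the eigenvalues of $\mathrm{\widetilde{A}}$ become $2-4\sin^2(\pi\widetilde{m}_j/(2\widetilde{h}))=2\cos(\pi\widetilde{m}_j/\widetilde{h})$; moreover, since every diagram here is a bipartite tree, its adjacency matrix has the block shape $\left(\begin{smallmatrix}0&P\\ Q&0\end{smallmatrix}\right)$ and hence a spectrum symmetric under $\lambda\mapsto-\lambda$, so the product $\prod_j(1-2\cos(\pi\widetilde{m}_j/\widetilde{h})t)$ is unaffected by the choice of branch. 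Taking the product gives the denominator factorization, and the identical argument applied to the finite Cartan matrix $2\mathrm{I}-\mathrm{A}$ gives the numerator; dividing and invoking Theorem \ref{thm3.1} finishes the proof.

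The step I expect to be the main obstacle is the rigorous identification of the affine Coxeter spectrum with the exponents of Table 6. Unlike the finite case, the affine Coxeter transformation is not semisimple---it carries a nontrivial Jordan block at the Perron eigenvalue (for $\mathrm{A}_1^{(1)}$, for instance, its characteristic polynomial is $(\lambda-1)^2$)---so the exponents must be read off from the $\eta_j$ supplied by Lemma \ref{spetrum} rather than from the bare eigenvalues $e^{\eta_j}$, and one must confirm that these reproduce the entries of Table 6 uniformly across all families. Since the list of pairs $N\lhd G$ from Section 2 is finite and the matrices $\mathrm{\widetilde{A}}, \mathrm{A}$ are written down explicitly there, a safeguard is to verify each factorization by direct computation against Table 6; the trigonometric and determinantal manipulations are otherwise routine.
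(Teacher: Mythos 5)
Your proposal follows essentially the same route as the paper: both reduce the last equality of \eqref{3.4} to the spectral factorizations of $\mathrm{\widetilde{A}}$ and $\mathrm{A}$, obtained by combining the McKay--Slodowy relation $\mathrm{C_{\widetilde{A}}}=2\mathrm{I}-\mathrm{\widetilde{A}}$ with Lemma \ref{spetrum} and the exponent duality $\widetilde{m}_i+\widetilde{m}_{n-i+1}=\widetilde{h}$, then invoking Theorem \ref{thm3.1}. Your additional care about the hypotheses of Lemma \ref{spetrum}, the branch choice in $4\cosh^2(\eta_j/4)$, and the bipartite symmetry of the spectrum only tightens points the paper passes over.
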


\begin{proof} The affine Cartan matrix
$\mathrm{C_{\widetilde{A}}=2I-\widetilde{A}}$ in view of the McKay-Slodowy correspondence (see Sect. 2). 
Note the duality of the exponents $\widetilde{m}_i+\widetilde{m}_{n-i+1}=\widetilde{h} \ (n=|\mathrm{\check{I}}|) $ implies that for each $i\in \mathrm{\check{I}}$,
the index set of $\widetilde{\Delta}$
\begin{align*}
 \frac{2 \widetilde{m}_i \pi \sqrt{-1}}{\widetilde{h}} + \frac{2 \widetilde{m}_{n-i+1} \pi \sqrt{-1}}{\widetilde{h}} = 2\pi \sqrt{-1}.
\end{align*}

It follows from  $\mathrm{Lemma}$ \ref{spetrum} that the roots of the characteristic polynomial of Cartan matrix $\mathrm{C}_{\widetilde{\mathrm{A}}}$ are
\begin{align*}
  4\cosh^2(\frac{\widetilde{m}_1\pi \sqrt{-1}}{2\widetilde{h}}), 4\cosh^2(\frac{\widetilde{m}_2\pi \sqrt{-1}}{2\widetilde{h}}),
\ldots, 4\cosh^2(\frac{\widetilde{m}_n\pi \sqrt{-1}}{2\widetilde{h}}).
\end{align*}
Therefore the roots of the characteristic polynomial of $\mathrm{\widetilde{A}=2I}-\mathrm{C}_{\widetilde{\mathrm{A}}}$ are
\begin{align*}
  4\cosh^2(\frac{\widetilde{m}_1\pi \sqrt{-1}}{2\widetilde{h}})-2, 4\cosh^2(\frac{\widetilde{m}_2\pi \sqrt{-1}}{2\widetilde{h}})-2,
\ldots, 4\cosh^2(\frac{\widetilde{m}_n\pi \sqrt{-1}}{2\widetilde{h}})-2,
\end{align*}
or 
\begin{align*}
  2\cos(\frac{\widetilde{m}_1\pi}{\widetilde{h}}),2\cos(\frac{\widetilde{m}_2\pi}{\widetilde{h}}),
\ldots,2\cos(\frac{\widetilde{m}_n\pi}{\widetilde{h}}).
\end{align*}
Therefore
$${\mathrm{det}}({\rm I}-t{\rm \widetilde{A}}^{T})={\mathrm{det}}({\rm I}-t{\rm \widetilde{A}})=
\prod\limits_{\widetilde{m}_i\in \widetilde{\Delta}}(1-2\cos(\frac{\widetilde{m}_i \pi}{\widetilde{h}})t).$$

Similarly the adjacency matrix $A$ of the finite Dynkin diagram {\rm $\mathcal{R}_{V}(G)$} also has the
property
$${\mathrm{det}}({\rm I}-t{\rm  A}^{T})=\prod\limits_{m_i\in \Delta}(1-2\cos(\frac{m_i \pi}{h})t).$$
\end{proof}

\begin{remark}
As a consequence of formula \eqref{3.4}, we see that the character values
$\chi_{V}(g)$ ($g\in \Upsilon(N)$) coincide with $2\cos(\widetilde{m}_i \pi/\widetilde{h})$ ($i\in {\rm \check{I}}$).
In particular, when $N=1$, the formula \eqref{3.4} give rise to the exponents of all affine Lie algebras
except ${\rm A}_{2n}^{(1)}$.
\end{remark}

\subsection{Closed form of Poincar\'e series for $N$-invariants and $G$-invariants}

In this section, we give closed-form expressions of
the Poincar\'e series $\check{m}^{0}(t)=\hat{m}^{0}(t)$
for {\rm $N \lhd G\leq \mathrm{SU}_2$} and $V\cong \mathbb{C}^2$ in $T(V)=\oplus_{k \geq 0}V^{\otimes k}$. The
closed-form expressions of $\check{m}^{0}(t)=\hat{m}^{0}(t)$ as the trivial $N$-module in $T(V)$
have been considered in \cite{Ben},  we revisit the closed-form expressions from the viewpoint of the adjacency matrices
${\rm A}$ and ${\rm \widetilde{A}}$ of 
non-simply laced Dynkin diagrams.
For the three pairs $(O,T)$,
$(T,D_2$), $(D_2,C_2$),
the Poincar\'e series $\check{m}^{i}(t)$ and
$\hat{m}^{i}(t)$ are given for $i$ running through $\mathrm{\check{I}}$ and $\mathrm{\hat{I}}$, respectively.

It is known that the Cartan matrices of finite Dynkin diagrams are closely related with the Chebyshev polynomials of the first and second kinds
\cite{Ben,D,Kor}. We also start by recalling  some simple facts about Chebyshev polynomials (also see \cite{Ri}).

The Chebyshev polynomials of the first kind ${\rm T}_n(t)$
and the second kind ${\rm U}_n(t)$ are recursively defined by:
\begin{align}
  &{\rm T}_0(t)=1, \ \ \ \ {\rm T}_1(t)=t,  \ \ \ \ \ {\rm T}_{n+1}(t)=2t{\rm T}_{n}(t)-{\rm T}_{n-1}(t)
  \ \   {\rm for \ all} \ n\geq1, \label{20} \\
  &{\rm U}_0(t)=1, \ \ \ \ {\rm U}_1(t)=2t,  \ \ \ \ {\rm U}_{n+1}(t)=2t{\rm U}_{n}(t)-
  {\rm U}_{n-1}(t) \ \   {\rm for \ all} \  n\geq1. \label{21}
\end{align}
The additive closed forms of the polynomials ${\rm T}_n(t)$ and ${\rm U}_n(t)$ are
\begin{align}
  {\rm T}_{n}(t)&= {\sum\limits_{i=0}^{\lfloor n/2\rfloor}}{\binom{n}{2i}}t^{n-2i}(t^2-1)^i
  =t^n{\sum\limits_{i=0}^{\lfloor n/2\rfloor}}{\binom{n}{2i}}(1-t^{-2})^i, \label{22} \\
  {\rm U}_{n}(t)&= {\sum\limits_{i=0}^{\lfloor n/2\rfloor}}(-1)^i{\binom{n-i}{i}}(2t)^{n-2i}. \label{24}
\end{align}
In addition, the multiplicative expressions of ${\rm T}_n(t)$ and ${\rm U}_n(t)$ are the following well known formulas:
\begin{align}
{\rm T}_{n}(t)&= 2^{n-1}\prod\limits_{i=1}^n\left(t-{\cos}\left(\frac{(2i-1)\pi}{2n}\right)\right), \label{23} \\
{\rm U}_{n}(t)&=2^{n}\prod\limits_{i=1}^n\left(t-{\rm \cos}\left(\frac{\pi i}{n+1}\right)\right).  \label{25}
\end{align}
Moreover, ${\rm T}_n(t)$ and ${\rm U}_n(t)$ are related by
\begin{eqnarray}\label{3.14}
 {\rm T}_{n}(t)&=&{\rm U}_{n}(t)-t{\rm U}_{n-1}(t).
\end{eqnarray}

\subsubsection{The pair ($D_{2(n-1)}$,$D_{n-1}$)}

From Sect. \ref{e:AB}, the imbedding of the subgroup $D_{n-1}$ is  $\pi=\delta_1'\cong\mathbb{C}^2$,
so the set $\Upsilon(D_{n-1})=\{\pm1, (x^2)^i\ (i=1,\ldots, n-2),y\}$.
Since $\chi_{\delta_1'}((x^2)^i)=\theta_{4(n-1)}^{2i}+\theta_{4(n-1)}^{-2i}=2\cos(\pi i/(n-1))$ and
$\chi_{\delta_1'}(y)=0$, thus the denominator in formula \eqref{3.2} is
\begin{align}\label{d3}
 \mathrm{det}(\mathrm{I}-t\mathrm{\widetilde{A}}^{T})
   =\prod\limits_{g\in \Upsilon(D_{n-1})}(1-\chi_{V}(g)t)
  =(1-4t^2)\prod\limits_{i=1}^{n-2}\left(1-2\cos\left(\frac{\pi i}{n-1}\right)t\right).
\end{align}

The pair ($D_{2(n-1)}$,$D_{n-1}$) realizes the
the twisted affine Dynkin diagram of type ${\rm A_{2n-1}^{(2)}}$.
Removing the special vertex 
the finite Dynkin diagram of type ${\rm C}_{n}$ is obtained. Let ${\rm A}$ be the adjacency matrix of type ${\rm C}_{n}$, then
\begin{align*}
{\rm A}^{T}=\left(
      \begin{array}{ccccc}
        0 & 1 & 0 & \cdots & 0 \\
        1 & 0 & 1 & \cdots & 0 \\
        \vdots & \ddots & \ddots & \ddots & 0 \\
        0 & \cdots & 1 & 0 & 2 \\
        0 & 0 & \cdots & 1 & 0 \\
      \end{array}
    \right).
\end{align*}
Set ${\rm c}_{n-1}(t)=\det(\mathrm{I}-t\mathrm{A}^{T})$, then
${\rm c}_0(t)=1$, ${\rm c}_1(t)=1-2t^2$. Using cofactor expansion it is easy to get the 
recursive relation
\begin{equation}\label{b11}
  {\rm c}_{n+1}(t)={\rm c}_n(t)-t^2{\rm c}_{n-1}(t) \ \ {\rm for} \ \ n\geq1.
\end{equation}
Subsequently one has that 
\begin{align}\label{b1}
  {\rm c}_{n-1}(t)=2t^{n}{\rm T}_{n}\left(\frac{t^{-1}}{2}\right),
\end{align}
where ${\rm T}_{n}(t)$ is the Chebyshev polynomial of the first kind. Therefore we have that
\begin{align}
  {\rm c}_{n-1}(t) &= 2t^{n}\left(\frac{t^{-1}}{2}\right)^{n}{\sum\limits_{i=0}^{\lfloor {n}/2\rfloor}}
  {\binom{n}{2i}}\left(1-\left(\frac{t^{-1}}{2}\right)^{-2}\right)^i \nonumber\\
   &= 2^{(1-n)}{\sum\limits_{i=0}^{\lfloor {n}/2\rfloor}}{\binom{n}{2i}}(1-4t^{2})^i,  \label{3.29}
\end{align}
meanwhile the  multiplicative expression of $\mathrm{T}_n(t)$ gives that 
\begin{align}
  {\rm c}_{n-1}(t) &= 2t^{n}\left(2^{n-1}\prod\limits_{i=1}^n\left(\frac{t^{-1}}{2}-{\cos}\left(\frac{(2i-1)\pi}{2n}\right)\right)\right) \nonumber\\
  &= \prod\limits_{i=1}^{n}\left(1-2{\cos}\left(\frac{\left(2i-1\right)\pi}{2n}\right)t\right). \label{3.30}
\end{align}

The formula \eqref{d3} is also related with the Chebyshev polynomial. In fact, using the Laplace expansion and \eqref{b1} we have that
\begin{align*}
  {\rm det}({\rm I}-t{\rm \widetilde{A}}^{T}) &= {\rm c}_{n-1}(t)-t^2{\rm c}_{n-3}(t) \\
   & = 2t^{n}\left({\rm T}_{n}\left(\frac{t^{-1}}{2}\right)-{\rm T}_{n-2}\left(\frac{t^{-1}}{2}\right)\right).
\end{align*}

It follows from \eqref{3.14}, \eqref{21} and \eqref{24} that
\begin{align*}
  {\rm T}_{n}(t)-{\rm T}_{n-2}(t) 
  &=t{\rm U}_{n-1}(t)+t{\rm U}_{n-3}(t)-2{\rm U}_{n-2}(t) \\
  &=(2t^2-2){\rm U}_{n-2}(t) \\
  &=(2t^2-2){\sum\limits_{i=0}^{\lfloor (n-2)/2\rfloor}}(-1)^i{\binom{n-2-i}{i}}(2t)^{n-2-2i}.
\end{align*}
Therefore
\begin{align*}
{\rm det}({\rm I}-t{\rm \widetilde{A}}^{T})
   &=(1-4t^2){\sum\limits_{i=0}^{\lfloor (n-2)/2\rfloor}}(-1)^i{\binom{n-2-i}{i}}t^{2i}.
\end{align*}

Summarizing these, we have proved the following result.
\begin{theo}\label{thm3.23} Associated with the pair $\rm D_{n-1}\lhd D_{2(n-1)}\leq \mathrm{SU}_2$,
the Poincar\'e series
for {\rm $D_{n-1}$}-invariants and {\rm $D_{2(n-1)}$}-invariants
in {\rm $T(V)$} 
are given by
{\rm \begin{equation}\label{3.24}
  \check{m}^{0}(t)=\hat{m}^{0}(t)=
  \frac{\prod\limits_{i=1}^{n}\left(1-2{\cos}\left(\frac{\left(2i-1\right)\pi}{2n}\right)t\right)}
  {(1-4t^2)\prod\limits_{i=1}^{n-2}\left(1-2\cos\left(\frac{\pi i}{n-1}\right)t\right)}
  =\frac{2^{(1-n)}{\sum\limits_{i=0}^{\lfloor {n}/2\rfloor}}{\binom{n}{2i}}(1-4t^{2})^i}
  {(1-4t^2){\sum\limits_{i=0}^{\lfloor (n-2)/2\rfloor}}(-1)^i{\binom{n-2-i}{i}}t^{2i}}.
\end{equation}}
\end{theo}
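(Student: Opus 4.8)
The plan is to read off the statement from Theorem~\ref{thm3.1} once both determinants appearing in \eqref{3.2} have been put into closed form. By Theorem~\ref{thm3.1} applied to $\mathrm{D}_{n-1}\lhd\mathrm{D}_{2(n-1)}$ with $V=\mathbb C^2$,
\[
\check{m}^{0}(t)=\hat{m}^{0}(t)=\frac{\mathrm{det}(\mathrm{I}-t\mathrm{A}^{T})}{\mathrm{det}(\mathrm{I}-t\mathrm{\widetilde{A}}^{T})},
\]
where $\mathrm{A}$ is the adjacency matrix of the finite diagram of type $\mathrm{C}_n$ obtained by deleting the special node of $\mathcal{R}_{V}(\check{\mathrm{D}}_{2(n-1)})$ and $\mathrm{\widetilde{A}}$ is the adjacency matrix of the affine diagram $\mathrm{A}_{2n-1}^{(2)}$. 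It therefore suffices to produce both a multiplicative and an additive closed form for each determinant; pairing the multiplicative forms gives the first expression in \eqref{3.24} and pairing the additive forms gives the second.

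For the numerator I would set ${\rm c}_{n-1}(t)=\mathrm{det}(\mathrm{I}-t\mathrm{A}^{T})$, obtain the three-term recursion \eqref{b11} by a cofactor expansion along the last row of $\mathrm{I}-t\mathrm{A}^{T}$, and check that ${\rm c}_{n-1}(t)=2t^{n}{\rm T}_{n}(t^{-1}/2)$ as in \eqref{b1} by comparing \eqref{b11} with the Chebyshev recursion \eqref{20} and the initial values ${\rm c}_0=1,\ {\rm c}_1=1-2t^{2}$. Substituting the additive form \eqref{22} of ${\rm T}_{n}$ at the argument $t^{-1}/2$ then yields \eqref{3.29}, while substituting the multiplicative form \eqref{23} yields \eqref{3.30}; the only bookkeeping is the cancellation $2t^{n}(2t)^{-n}=2^{1-n}$ and the identity $1-(t^{-1}/2)^{-2}=1-4t^{2}$.

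For the denominator I would compute $\mathrm{det}(\mathrm{I}-t\mathrm{\widetilde{A}}^{T})$ twice. The multiplicative form \eqref{d3} is immediate from the eigenvalue description in Theorem~\ref{thm3.1}: the relevant representatives are $\Upsilon(\mathrm{D}_{n-1})=\{\pm1,(x^2)^{i}\ (1\le i\le n-2),\,y\}$, and the values $\chi_{\delta_1'}(\pm1)=\pm2$, $\chi_{\delta_1'}((x^2)^{i})=2\cos(\pi i/(n-1))$, $\chi_{\delta_1'}(y)=0$ give the factor $(1-4t^2)$ and the product over $i$. For the additive form I would expand $\mathrm{det}(\mathrm{I}-t\mathrm{\widetilde{A}}^{T})$ along the row of the special leaf $\check\delta_0^{-}$ (attached to $\check\delta_1$): deleting this node leaves the $\mathrm{C}_n$ matrix, while deleting it together with $\check\delta_1$ leaves an isolated node and a $\mathrm{C}_{n-2}$ chain, so $\mathrm{det}(\mathrm{I}-t\mathrm{\widetilde{A}}^{T})={\rm c}_{n-1}(t)-t^2{\rm c}_{n-3}(t)=2t^{n}({\rm T}_{n}(t^{-1}/2)-{\rm T}_{n-2}(t^{-1}/2))$ by \eqref{b1}. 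The identity ${\rm T}_{n}-{\rm T}_{n-2}=(2t^2-2){\rm U}_{n-2}$, derived from \eqref{3.14} and the recursions \eqref{20}--\eqref{21}, together with the additive form \eqref{24} of ${\rm U}_{n-2}$, then collapses this to $(1-4t^2)\sum_{i}(-1)^{i}\binom{n-2-i}{i}t^{2i}$.

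I expect the denominator to be the main obstacle. Two points need care: getting the correct structural cofactor expansion at the fork of $\mathrm{A}_{2n-1}^{(2)}$ so that exactly the combination ${\rm c}_{n-1}-t^2{\rm c}_{n-3}$ appears, and tracking the substitution $t\mapsto t^{-1}/2$ through the Chebyshev identities so that the spurious prefactor $2t^{n}$ cancels and the factor $(1-4t^2)$ emerges in both the product and the sum. Once the two determinants are in hand in both forms, forming the quotient $\mathrm{det}(\mathrm{I}-t\mathrm{A}^{T})/\mathrm{det}(\mathrm{I}-t\mathrm{\widetilde{A}}^{T})$ reproduces the two equal expressions of \eqref{3.24}, completing the proof.
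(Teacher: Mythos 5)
Your proposal is correct and follows essentially the same route as the paper: Theorem~\ref{thm3.1} reduces everything to the two determinants, the numerator is identified with $2t^{n}{\rm T}_{n}(t^{-1}/2)$ via the recursion \eqref{b11}, and the denominator is obtained both from the character values on $\Upsilon(D_{n-1})$ and from the cofactor expansion $\det(\mathrm{I}-t\widetilde{\mathrm{A}}^{T})={\rm c}_{n-1}(t)-t^{2}{\rm c}_{n-3}(t)$ combined with ${\rm T}_{n}-{\rm T}_{n-2}=(2t^{2}-2){\rm U}_{n-2}$. The points you flag as needing care (the fork expansion and the cancellation of the prefactor $2t^{n}$) are exactly the steps the paper carries out.
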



\subsubsection{The pair ($D_{n}$,$C_{2n}$)}

Recall Sect. \ref{s:typeDC},  the imbedding of $D_n$ is $\pi=\xi_1+\xi_{-1}\cong\mathbb{C}^2$, so
$\Upsilon(C_{2n})=\{\pm1, x^i(i=1,\ldots, n-1)\}$.
Then $\chi_{\xi_1+\xi_{-1}}(\pm 1)=\pm2$ and $\chi_{\xi_1+\xi_{-1}}(x^i)=2\cos(\pi i/n)$
imply that 
\begin{align}\label{c1}
 \mathrm{det}(\mathrm{I}-t\mathrm{\widetilde{A}}^{T})
  ={\prod_{g\in \Upsilon(C_{2n})}(1-\chi_{V}(g)t)}
  =(1-4t^2)\prod_{i=1}^{n-1}\left(1-2\cos\left(\frac{\pi i}{n}\right)t\right).
\end{align}

As the pair ($D_{n}$,$C_{2n}$) realizes the twisted affine Dynkin diagram ${\rm D}_{n+1}^{(2)}$.
The adjacency matrix ${\rm A}$ of the finite Dynkin diagram is 
${\rm B}_{n}$ after removing the special vertex of ${\rm D}_{n+1}^{(2)}$.
As ${\rm B}_{n}$ is the dual of ${\rm C}_{n}$, the denominator
${\rm b}_{n-1}(t):={\mathrm{det}}({\rm I}-t{\rm{A}}^{T})=c_{n-1}(t)$ given by the same formulas
\eqref{b1}, \eqref{3.29} and \eqref{3.30}.

Using the cofactor expansion and combining with \eqref{b1}, \eqref{3.14} and \eqref{21} it follows that
\begin{align*}
  {\rm det}({\rm I}-t{\rm \widetilde{A}}^{T}) &= {\rm b}_{n-1}(t)-2t^2{\rm b}_{n-2}(t) \nonumber \\
   & = 2t^{n}{\rm T}_{n}\left(\frac{t^{-1}}{2}\right)-2t^2\left(2t^{n-1}{\rm T}_{n-1}\left(\frac{t^{-1}}{2}\right)\right) \nonumber  \\
   & = (t^{n-1}-4t^{n+1}){\rm U}_{n-1}\left(\frac{t^{-1}}{2}\right)\\
   &=(1-4t^2){\sum\limits_{i=0}^{\lfloor (n-1)/2\rfloor}}(-1)^i{\binom{n-1-i}{i}}t^{2i},
\end{align*}
where we have used identity \eqref{24}. This then proves the following result.


\begin{theo}\label{thm3.44} For the pair ${\rm C_{2n}\lhd D_{n}}\leq \mathrm{SU}_2$ and $V=\mathbb C^2$, the
Poincar\'e series for
{\rm $C_{2n}$}-invariants and {\rm $D_{n}$}-invariants
in {\rm $T(V)=\oplus_{k \geq 0}V^{\otimes k}$} can be given by
{\rm \begin{equation}\label{3.45}
  \check{m}^{0}(t)=\hat{m}^{0}(t)=
  \frac{\prod\limits_{i=1}^{n}\left(1-2{\cos}\left(\frac{\left(2i-1\right)\pi}{2n}\right)t\right)}
  {(1-4t^2)\prod\limits_{i=1}^{n-1}\left(1-2\cos\left(\frac{\pi i}{n}\right)t\right)}
  =\frac{2^{(1-n)}{\sum\limits_{i=0}^{\lfloor {n}/2\rfloor}}{\binom{n}{2i}}(1-4t^{2})^i}
  {(1-4t^2){\sum\limits_{i=0}^{\lfloor (n-1)/2\rfloor}}(-1)^i{\binom{n-1-i}{i}}t^{2i}}.
\end{equation}}
\end{theo}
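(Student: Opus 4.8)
The plan is to specialize Theorem \ref{thm3.1} to the pair $C_{2n}\lhd D_n$ and substitute the two determinants computed above. By Theorem \ref{thm3.1},
\begin{equation*}
  \check{m}^{0}(t)=\hat{m}^{0}(t)=\frac{\det(\mathrm{I}-t\mathrm{A}^{T})}{\det(\mathrm{I}-t\mathrm{\widetilde{A}}^{T})},
\end{equation*}
where $\mathrm{\widetilde{A}}$ is the adjacency matrix of the affine diagram $\mathrm{D}_{n+1}^{(2)}$ and $\mathrm{A}$ is that of the finite diagram $\mathrm{B}_n$ obtained by deleting the special vertex (the trivial module $\check{\delta}_0^{\pm}=\xi_0$). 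So it suffices to exhibit each determinant in both a multiplicative and an additive form.

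First I would handle the numerator. Since $\mathrm{B}_n$ is dual to $\mathrm{C}_n$, their adjacency matrices are transposes of one another and hence share the same characteristic-type determinant, so $\mathrm{b}_{n-1}(t)=\det(\mathrm{I}-t\mathrm{A}^{T})=\mathrm{c}_{n-1}(t)$. The closed form \eqref{b1}, namely $\mathrm{c}_{n-1}(t)=2t^{n}\mathrm{T}_n(t^{-1}/2)$, then delivers at once the multiplicative expression \eqref{3.30}, $\prod_{i=1}^{n}(1-2\cos((2i-1)\pi/(2n))t)$, and the additive expression \eqref{3.29}, $2^{1-n}\sum_{i=0}^{\lfloor n/2\rfloor}\binom{n}{2i}(1-4t^2)^i$. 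These are exactly the two numerators appearing in \eqref{3.45}.

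Next I would treat the denominator in its two guises. On one hand, Theorem \ref{thm3.1} identifies $\det(\mathrm{I}-t\mathrm{\widetilde{A}}^{T})=\prod_{g\in\Upsilon(C_{2n})}(1-\chi_V(g)t)$; evaluating the embedding character on $\Upsilon(C_{2n})=\{\pm1,\,x^i\ (1\le i\le n-1)\}$, with $\chi_{\xi_1+\xi_{-1}}(\pm 1)=\pm 2$ and $\chi_{\xi_1+\xi_{-1}}(x^i)=2\cos(\pi i/n)$, produces the product form \eqref{c1}. On the other hand, a Laplace expansion of $\det(\mathrm{I}-t\mathrm{\widetilde{A}}^{T})$ along the row and column of the special vertex yields the recursion $\det(\mathrm{I}-t\mathrm{\widetilde{A}}^{T})=\mathrm{b}_{n-1}(t)-2t^2\,\mathrm{b}_{n-2}(t)$, in which the coefficient $2t^2$ records the double bond joining the special vertex to its neighbor in $\mathrm{D}_{n+1}^{(2)}$. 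Substituting $\mathrm{b}_{n-1}(t)=2t^{n}\mathrm{T}_n(t^{-1}/2)$ and $\mathrm{b}_{n-2}(t)=2t^{n-1}\mathrm{T}_{n-1}(t^{-1}/2)$ and applying the relation \eqref{3.14} together with the recursion \eqref{21} collapses the expression to $(t^{n-1}-4t^{n+1})\mathrm{U}_{n-1}(t^{-1}/2)$, after which the additive form \eqref{24} of $\mathrm{U}_{n-1}$ gives $(1-4t^2)\sum_{i=0}^{\lfloor (n-1)/2\rfloor}(-1)^i\binom{n-1-i}{i}t^{2i}$. Combining the two numerators with the two denominators then yields both equalities in \eqref{3.45}.

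The step I expect to require the most care is the Laplace expansion producing the denominator recursion: one must track that the extra vertex of $\mathrm{D}_{n+1}^{(2)}$ attaches by a double rather than a single edge, so the correction term carries the factor $2t^2$, in contrast to the single-edge correction $-t^2\mathrm{c}_{n-3}(t)$ that arose for $\mathrm{A}_{2n-1}^{(2)}$ in Theorem \ref{thm3.23}. Once this recursion is pinned down, every remaining manipulation is a routine application of the Chebyshev identities \eqref{3.14}, \eqref{21}, and \eqref{24} already recorded above, and the theorem follows by assembling the pieces.
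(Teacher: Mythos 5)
Your proposal is correct and follows essentially the same route as the paper: specialize Theorem \ref{thm3.1}, identify the numerator with ${\rm c}_{n-1}(t)=2t^{n}{\rm T}_n(t^{-1}/2)$ via the $\mathrm{B}_n$--$\mathrm{C}_n$ duality, and obtain the denominator both from the character product over $\Upsilon(C_{2n})$ and from the cofactor recursion ${\rm b}_{n-1}(t)-2t^{2}{\rm b}_{n-2}(t)$, which collapses to $(t^{n-1}-4t^{n+1}){\rm U}_{n-1}(t^{-1}/2)$ by \eqref{3.14} and \eqref{21}. The $2t^{2}$ coefficient you flag is exactly the point the paper's computation turns on, and your treatment of it is right.
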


\begin{remark}
The two-dimensional natural module of $C_{2n}$ gives rise to the same Poincar\'e series of invariants
for the two pairs of subgroups ($D_{n}$,$C_{2n}$) and $(D_{2n}$,$C_{2n})$, so the equality \eqref{3.45}
gives the Poincar\'e series of invariants for $(D_{2n}$,$C_{2n})$. We remark that
\eqref{3.45} is also given as a reduced form
of the Poincar\'e series for {\rm $C_{2n}$}-invariant
in {\rm $T(V)$} \cite[Thm. 3.23]{Ben}.

\end{remark}

\subsubsection{The pairs  $(O,T)$, $(T,D_2$) and ($D_{2}$,$C_{2}$) }

\qquad

The Poincar\'e series $\check{m}^0(t)=\hat{m}^{0}(t)$ for  these pairs of subgroups
$N \lhd G\leq\mathrm{SU}_2$ and $V\cong \mathbb{C}^2$ can be computed using $\mathrm{Theorems}$ \ref{thm3.1}
or \ref{thm3.6}.


We list  the Poincar\'e series $\check{m}^{0}(t)=\hat{m}^{0}(t)$ for the
pairs $(O,T)$, $(T,D_2$), ($D_{2}$,$C_{2}$) in order as follows.
\begin{eqnarray*}
  \check{m}^{0}(t)=\hat{m}^{0}(t)&=&\frac{{\mathrm{det}}({\rm I}-t{\rm A}^{T})}{{\prod_{g\in \Upsilon(T)}(1-{\rm \chi_{V}}(g)t)}}
   =\frac{\left(1-4\cos^2\left(\frac{\pi}{12}\right)t^2\right)\left(1-4\cos^2\left(\frac{5\pi}{12}\right)t^2\right)}
   {(1-2t)(1+2t)(1-t)(1+t)}  \\
   &=& \frac{1-4t^2+t^4}{1-5t^2+4t^4}=1+t^2+2t^4+6t^6+22t^8+86t^{10}+\cdots \\
  \check{m}^{0}(t)=\hat{m}^{0}(t)&=&\frac{{\mathrm{det}}({\rm I}-t{\rm A}^{T})}{{\prod_{g\in \Upsilon(D_2)}(1-{\rm \chi_{V}}(g)t)}}
   =\frac{\left(1-4\cos^2\left(\frac{\pi}{6}\right)t^2\right)}
   {(1-2t)(1+2t)}  \\
   &=& \frac{1-3t^2}{1-4t^2}=1+t^2+4t^4+16t^6+64t^8+256t^{10}+\cdots \\
  \check{m}^{0}(t)=\hat{m}^{0}(t)&=&\frac{{\mathrm{det}}({\rm I}-t{\rm A}^{T})}{{\prod_{g\in \Upsilon(C_2)}(1-{\rm \chi_{V}}(g)t)}}
   =\frac{\left(1-2\cos\left(\frac{\pi}{2}\right)t\right)}
   {(1-2t)(1+2t)}  \\
   &=& \frac{1}{1-4t^2}=1+4t^2+16t^4+64t^6+256t^8+1024t^{10}+\cdots. \\
\end{eqnarray*}

If $i$ is not the special vertex of the Dynkin diagrams realized by  $(O,T)$, $(T,D_2$), ($D_{2}$,$C_{2}$),
the Poincar\'e series $\check{m}^{i}(t)$ and
$\hat{m}^{i}(t)$ can be directly worked out by applying
$\mathrm{Theorem }$ \ref{thm1} and \ref{thm11}.
They also can be computed by combining the
results of the series $\check{m}^{0}(t)=\hat{m}^{0}(t)$
and the identity $\check{m}^{i}(t)=\delta_{i,0}+t\sum\limits_{j} a_{ji}\check{m}^{j}(t)$
(resp. $\hat{m}^{i}(t)=\delta_{i,0}+t\sum\limits_{j}b_{ji}\hat{m}^{j}(t)$).
The Poincar\'e series of the
restriction of the $T$-modules $\check\omega_i$ and induced $O$-modules $\hat\tau_i$
in {\rm $T(V)=\oplus_{k \geq 0}V^{\otimes k}$} are
\begin{eqnarray*}
  & & \check{m}^{1}(t) = \hat{m}^{1}(t)=\frac{1}{t}(\check{m}^{0}(t)-1)=
  t+2t^3+6t^5+22t^7+86t^9+\cdots  \\
  & & \check{m}^{2}(t) = \hat{m}^{2}(t) =\frac{1}{t}\check{m}^{1}(t)-\check{m}^{0}(t)=
  t^2+4t^4+16t^6+64t^8+256t^{10}+\cdots \\
  & & \check{m}^{3}(t) = 2\hat{m}^{1'}(t) =\frac{1}{t}\check{m}^{2}(t)-\check{m}^{1}(t)=
  2t^3+10t^5+42t^7+170t^9+\cdots \\
  & & \check{m}^{4}(t) = 2\hat{m}^{0'}(t)=t\check{m}^{3}(t)=
  2t^4+10t^6+42t^8+170t^{10}+\cdots.
\end{eqnarray*}
The Poincar\'e series of the restriction of $D_2$-modules $\check\tau_i$ and induced $T$-modules $\hat\delta_i$
in {\rm $T(V)=\oplus_{k \geq 0}V^{\otimes k}$} are
\begin{eqnarray*}
  & & \check{m}^{1}(t)=\hat{m}^{1}(t)=\frac{1}{t}(\check{m}^{0}(t)-1)=
  t+4t^3+16t^5+64t^7+256t^9+\cdots   \\
  & & \check{m}^{2}(t)=3\hat{m}^{2}(t)=3t\check{m}^{1}(t)=
  3t^2+12t^4+48t^6+192t^8+\cdots.
\end{eqnarray*}
The Poincar\'e series of the
restricted $C_2$-module $\check\delta_1$ and the induced $D_2$-module $\hat\xi_1$ in {\rm $T(V)=\oplus_{k \geq 0}V^{\otimes k}$} is
\begin{eqnarray*}
  2\check{m}^{1}(t)= \hat{m}^{1}(t)=\frac{1}{t}(\hat{m}^{0}(t)-1)=4t+16t^3+64t^5+256t^7+1024t^9+\cdots.
\end{eqnarray*}

We remark that Benkart also studied the non-simply laced untwisted types (such as ${\rm B}_n^{(1)}$ and ${\rm C}_n^{(1)}$) in \cite{Ben2}.

\vskip30pt \centerline{\bf ACKNOWLEDGMENT}

N. Jing would like to thank the partial support of
Simons Foundation grant 523868 and NSFC grant 11531004. 
H. Zhang would like to thank the support of NSFC grant 11871325 and 11726016.
\bigskip

\bibliographystyle{amsalpha}

\end{document}